\newcommand{\bb}{\mathrm{\bf b}}
\newcommand{\bff}{\mathrm{\bf f}}
\newcommand{\bx}{\mathrm{\bf x}}
\newcommand{\by}{\mathrm{\bf y}}
\newcommand{\bA}{\mathrm{\bf A}}
\newcommand{\bu}{\mathrm{\bf u}}
\newcommand{\bB}{\mathrm{\bf B}}
\newcommand{\bZ}{\mathrm{\bf Z}}
\newcommand{\bC}{\mathrm{\bf C}}
\newcommand{\bD}{\mathrm{\bf D}}
\newcommand{\bF}{\mathrm{\bf F}}
\newcommand{\bG}{\mathrm{\bf G}}
\newcommand{\bH}{\mathrm{\bf H}}
\newcommand{\bI}{\mathrm{\bf I}}
\newcommand{\bV}{\mathrm{\bf V}}
\newcommand{\bR}{\mathrm{\bf R}}
\newcommand{\bY}{\mathrm{\bf Y}}
\newcommand{\balpha}{\mbox{\boldmath $\alpha$}}
\newcommand{\bbeta}{\mbox{\boldmath $\beta$}}
\newcommand{\bone}{\mathrm{\bf 1}}
\newcommand{\bxi}{\mbox{\boldmath $\xi$}}
\newcommand{\beps}{\mbox{\boldmath $\varepsilon$}}
\newcommand{\bmu}{\mbox{\boldmath $\mu$}}
\newcommand{\bLambda}{\mbox{\boldmath $\Lambda$}}
\newcommand{\bSigma}{\mbox{\boldmath $\Sigma$}}
\newcommand{\norm}{\bigg{\|}}
\newcommand{\bPhi}{\mbox{\boldmath $\Phi$}}
\newcommand{\ty}{\tilde \by}
\newcommand{\tB}{\tilde \bB}
\newcommand{\tb}{\widetilde \bb}
\newcommand{\tF}{\tilde \bF}
\newcommand{\hb}{\widehat \bb}
\newcommand{\hF}{\widehat \bF}
\newcommand{\hf}{\widehat \bff}
\newcommand{\hR}{\widehat \bR}
\newcommand{\hu}{\widehat \bu}
\newcommand{\hcov}{\widehat \cov}
\newcommand{\heps}{\widehat\beps}
\newcommand{\hSig}{\widehat\Sig}
\newcommand{\tSig}{\widetilde\Sig}
\newcommand{\hsig}{\widehat\sigma}
\newcommand{\hlam}{\widehat\lambda}
\newcommand{\hLam}{\widehat \bLambda}
\newcommand{\hxi}{\widehat\bxi}
\newcommand{\sam}{_{\text{sam}}}
\newcommand{\cov}{\mathrm{cov}}
\newcommand{\Sig}{\mathbf{\Sigma}}
\newcommand{\tr}{\mathrm{tr}}
\newcommand{\diag}{\mathrm{diag}}
\newcommand{\argmin}{\mathrm{argmin}}
\newcommand{\bw}{\mbox{\bf w}}
\newcommand{\hw}{\widehat \bw}
\newcommand{\var}{\mathrm{var}}
\newcommand{\beq}{\begin{eqnarray*}}
\newcommand{\eeq}{\end{eqnarray*}}
\numberwithin{equation}{section}
\theoremstyle{plain}
\newtheorem{thm}{Theorem}[section]
\newtheorem{lem}{Lemma}[section]
\newtheorem{cor}{Corollary}[section]
\newtheorem{prop}{Proposition}[section]
\newtheorem{assum}{Assumption}[section]
\theoremstyle{definition}
\newtheorem{exm}{Example}[section]
\newtheorem{remark}{Remark}[section]
\def\@biblabel#1{\hspace*{-\labelsep}}
\begin{document}

 \title{Large Covariance Estimation by Thresholding Principal Orthogonal Complements  }
\author{Jianqing Fan
\thanks{Address: Department of ORFE, Sherrerd Hall, Princeton University, Princeton, NJ 08544, USA, e-mail: \textit{jqfan@princeton.edu},
\textit{yuanliao@umd.edu}, \textit{mincheva@princeton.edu}.  The research was partially supported by NIH R01GM100474-01,
NIH R01-GM072611, DMS-0704337,   Bendheim Center for Finance at Princeton University, and Department of Mathematics at University of Maryland.  The bulk of the research was carried out while Yuan Liao was a postdoctoral fellow at Princeton University.}$\; ^\dag$, Yuan Liao$^\ddag$ and Martina Mincheva$^*$
\medskip\\{\normalsize $^*$Department of Operations Research and Financial Engineering,  Princeton University}
\medskip\\{\normalsize $^\dag$ Bendheim Center for Finance, Princeton University}
\medskip\\{\normalsize $^\ddag$ Department of Mathematics, University of Maryland}}
 
\date{}

\maketitle

\sloppy

\onehalfspacing
 
\begin{abstract}
This paper deals with the estimation of a high-dimensional covariance with a conditional sparsity structure and fast-diverging eigenvalues.  By assuming sparse error covariance matrix in an approximate factor model,  we allow for the presence of some   cross-sectional correlation even after taking out common but unobservable factors.  We introduce the Principal Orthogonal complEment Thresholding (POET) method to explore such an approximate factor structure with sparsity.  The POET estimator includes the sample covariance matrix, the factor-based covariance matrix (Fan, Fan, and Lv, 2008), the thresholding estimator (Bickel and Levina, 2008) and the adaptive thresholding estimator (Cai and Liu, 2011) as specific examples.  We provide mathematical insights when the factor analysis is approximately the same as the principal component analysis for high-dimensional data.  The rates of convergence of the sparse residual covariance matrix and the conditional sparse covariance matrix are studied under various norms.  It is shown that the impact of estimating the unknown factors vanishes as the dimensionality increases. The uniform rates of convergence for the unobserved factors and their factor loadings are derived.  The asymptotic results are also verified by extensive simulation studies. Finally, a real data application on portfolio allocation is presented.
\end{abstract}

\textbf{Keywords:} High-dimensionality,  approximate factor model, unknown factors, principal components, sparse matrix, low-rank matrix, thresholding, cross-sectional correlation, diverging eigenvalues.

\pagebreak%
\doublespacing

\onehalfspacing

\section{Introduction}

Information and technology make large data sets widely available for scientific discovery.  Much statistical analysis of such high-dimensional data involves the estimation of a covariance matrix or its inverse (the precision matrix).  Examples include portfolio management and risk assessment (Fan, Fan and Lv, 2008), high-dimensional classification such as Fisher discriminant (Hastie, Tibshirani and Friedman, 2009), graphic models (Meinshausen and B\"uhlmann, 2006),  statistical inference such as controlling false discoveries in multiple testing (Leek and Storey, 2008; Efron, 2010), finding quantitative trait loci based on longitudinal data (Yap, Fan, and Wu, 2009; Xiong et al. 2011),  and testing the capital asset pricing model (Sentana, 2009), among others.  See Section 5 for some of those applications.  Yet, the dimensionality is often either comparable to the sample size or even larger.  In such cases, the sample covariance is known to have  poor performance  (Johnstone, 2001), and some regularization is needed.

Realizing the importance of estimating large covariance matrices and the challenges brought by the high dimensionality, in recent years researchers have proposed various regularization techniques to consistently estimate $\Sig$. One of the key assumptions is that the covariance matrix is sparse, namely,   many entries are zero or nearly so (Bickel and Levina, 2008, Rothman et al, 2009, Lam and Fan 2009, Cai and Zhou, 2010, Cai and Liu, 2011).  In many applications, however,  the sparsity assumption directly on $\Sig$ is not appropriate.  For example, financial returns depend on the equity market risks, housing prices depend on the economic health, gene expressions can be stimulated by cytokines, among others.
Due to the presence of common factors, it is unrealistic to assume that many outcomes are uncorrelated.  An alternative method is to assume a factor model structure, as in Fan, Fan and Lv (2008). However, they restrict themselves to the strict factor models with known factors.

A natural extension is the conditional sparsity.  Given the common factors, the outcomes are weakly correlated. In order to do so, we consider an approximate factor model, which has been frequently used in economic and financial studies (Chamberlain and Rothschild, 1983; Fama and French 1993; Bai and Ng, 2002, etc):
\begin{equation} \label{eq1.1}
y_{it}=\bb_i'\bff_t+u_{it}.
\end{equation}
Here $y_{it}$ is the observed response for the $i$th ($i=1,...,p$) individual at time $t=1,...,T$; $\bb_i$ is a  vector of factor loadings; $\bff_t$  is a $K\times 1$ vector of common factors, and $u_{it}$  is the error term, usually called \textit{idiosyncratic  component}, uncorrelated with $\bff_t$.  Both $p$ and $T$ diverge to infinity, while $K$ is assumed fixed throughout the paper, and $p$ is possibly much larger than $T$.  

We emphasize that in model (\ref{eq1.1}), only $y_{it}$ is observable. It is intuitively clear that the unknown common factors can only be inferred reliably when there are sufficiently many cases, that is, $p \to \infty$. In a data-rich environment, $p$ can diverge at a rate faster than $T$. The factor model (\ref{eq1.1}) can be put in a matrix form as
\begin{equation}\label{eq1.2}
    \by_t=\bB \bff_t+{\bu_t}.
\end{equation}
where $\by_t=(y_{1t},...,y_{pt})'$, $\bB=({\bb}_1,...,{\bb}_p)'$ and $\bu_t=(u_{1t},...,u_{pt})'$.  We are interested in $\Sig$,  the $p\times p$ covariance matrix of $\by_t$, and its inverse, which are assumed to be time-invariant.   Under model (\ref{eq1.1}),  $\Sig$ is given by
\begin{equation}\label{eq1.3}
\Sig=\bB\cov(\bff_t)\bB'+\Sig_u,
\end{equation}
where $\Sig_u=(\sigma_{u,ij})_{p\times p}$ is the covariance matrix of $\bu_t$.  The literature on approximate factor models typically assumes that the first $K$ eigenvalues of $\bB\cov(\bff_t)\bB'$ diverge at rate $O(p)$, whereas all the eigenvalues of $\Sig_u$ are bounded as $p\rightarrow\infty$.
This assumption holds easily when the factors are pervasive in the sense that a non-negligible fraction of factor loadings should be non-vanishing.
The decomposition (\ref{eq1.3}) is then asymptotically identified as $p\rightarrow\infty$.  In addition to it, in this paper we  assume that $\Sig_u$ is \textit{approximately sparse} as in Bickel and Levina (2008) and Rothman et al. (2009): for some $q\in[0,1)$,
$$
m_p=\max_{i\leq p}\sum_{j\leq p}|\sigma_{u,ij}|^q
$$
does not grow too fast as $p\rightarrow\infty.$ In particular, this includes the exact sparsity assumption ($q=0$) under which $m_p=\max_{i\leq p}\sum_{j\leq p}I_{(\sigma_{u,ij}\neq0)}$, the maximum number of nonzero elements in each row.

The conditional sparsity structure  of  (\ref{eq1.2}) was explored by Fan, Liao and Mincheva (2011) in estimating the covariance matrix, when the factors $\{\bff_t\}$ are observable.  This allows them to use regression analysis to estimate $\{\bu_t\}_{t=1}^T$.  This paper deals with the situation in which the factors are unobservable and have to be inferred.  Our approach is simple, optimization-free and it uses the data only through the sample covariance matrix.  Run the singular value decomposition on the sample covariance matrix $\hSig_{\sam}$ of $\by_t$, keep the covariance matrix formed by the first $K$ principal components, and apply the thresholding procedure to the remaining covariance matrix.  This results in a Principal Orthogonal complEment Thresholding (POET) estimator. When the number of common factors $K$ is unknown, it can be estimated from the data.  See Section 2 for additional details.  We will investigate various properties of POET under the assumption that the data are serially dependent, which includes independent observations as a specific example. The rate of convergence under various norms for both estimated $\Sig$ and $\Sig_u$ and their precision (inverse) matrices  will be derived. We show that the effect of estimating the unknown factors on the rate of convergence vanishes when  $p\log p\gg T$, and in particular, the rate of convergence for $\Sig_u$ achieves the optimal rate in Cai and Zhou (2012).

This paper focuses on the high-dimensional \textit{static   factor model}  (\ref{eq1.2}), which is innately related to the principal component analysis (PCA), as clarified in Section 2.  This  feature makes it different from the classical factor model with fixed dimensionality (e.g., Lawley and Maxwell 1971).  In the last ten years, much theory on the estimation and  inference of the static factor model has been developed, for example,  Stock and Watson (1998, 2002),  Bai and Ng (2002),   Bai (2003), Doz, Giannone and Reichlin (2011),  
  among others.  Our contribution  is  on the  estimation of   covariance matrices and their inverse in large factor models.

The \textit{static} model considered in this paper is to be distinguished from the \textit{dynamic factor model} as in  Forni, Hallin, Lippi and Reichlin (2000); the latter   allows  $\by_t$ to also depend on $\bff_t$ with  lags in time.   Their approach  is based on the eigenvalues and principal components of spectral density matrices, and on the frequency domain analysis.  
Moreover, as shown in  Forni and Lippi (2001),   the dynamic factor model does not really impose a restriction on the data generating process, and the assumption of idiosyncrasy (in their terminology, a $p$-dimensional process is idiosyncratic if all the eigenvalues of its spectral density matrix remain bounded as $p\rightarrow\infty$) asymptotically identifies the decomposition of $y_{it}$ into the common component and idiosyncratic error.  The literature   includes, for example, Forni et al. (2000, 2004), Forni and Lippi (2001), Hallin and Li\v{s}ka (2007, 2011), and many other references therein.  Above all, both the static and  dynamic factor models are receiving increasing attention in  applications of many fields where information usually is scattered through a (very) large number   of interrelated time series.

There has been extensive literature in recent years that deals with sparse principal components, which has been widely used to enhance the convergence of the principal components in high-dimensional space.  d'Aspremont, Bach and El Ghaoui (2008),  Shen and Huang (2008), Witten, Tibshirani, and Hastie (2009) and Ma (2011) proposed and studied various algorithms for computations. More literature on sparse PCA is found in  Johnstone and Lu (2009),  Amini and Wainwright (2009), Zhang and El Ghaoui (2011),  Birnbaum et al. (2012), among others. In addition, there has also been a growing literature that theoretically studies the recovery from a low-rank plus sparse matrix estimation problem, see for example,    Wright et al. (2009), Lin et al. (2009), Cand\`{e}s et al. (2011), Luo (2011), Agarwal, Nagahban, Wainwright (2012), Pati et al. (2012).  It corresponds to the identifiability issue of our problem.

There is  a big difference between our model and those considered in the aforementioned literature.  In the current paper, the first $K$ eigenvalues  of $\Sig$ are spiked and grow at a rate $O(p)$, whereas the eigenvalues of the matrices studied in the existing literature on covariance estimation  are usually assumed to be either bounded or slowly growing.   Due to this distinctive feature, the common components and the idiosyncratic components can be identified, and in addition, PCA on the sample covariance matrix can consistently estimate the space spanned by the eigenvectors of $\Sig$.  The existing methods of either thresholding directly or solving a constrained optimization method can fail in the presence of very spiked principal eigenvalues.  However, there is a price to pay
here: as the first $K $ eigenvalues are ``too spiked",  one can hardly obtain a satisfactory  rate of convergence  for estimating $\Sig$ in absolute term, but it can be estimated accurately in relative term (see Section 3.3 for details).  In addition, $\Sig^{-1}$ can be estimated  accurately.

We would like to further note that the low-rank plus sparse representation of our model is on the population covariance matrix, whereas  Cand\`{e}s et al. (2011),  Wright et al. (2009), Lin et al. (2009)\footnote{We thank a referee for reminding us these related works.} considered such a representation on the data matrix.  As there is no $\Sig$ to  estimate, their goal  is limited to producing a low-rank plus sparse matrix decomposition of the data matrix, which  corresponds to the identifiability issue of our study, and does not involve estimation and inference. In contrast, our ultimate goal is to estimate the population covariance matrices as well as the precision matrices.  For this purpose, we require the idiosyncratic components and common factors to be uncorrelated and  the data generating process to be strictly stationary.  The covariances considered in this paper are constant over time, though slow-time-varying covariance matrices are applicable through localization in time (time-domain smoothing).  Our consistency result on $\Sig_u$ demonstrates that the decomposition (\ref{eq1.3}) is identifiable, and hence our results also shed the light of the ``surprising phenomenon" of Cand\`{e}s et al. (2011) that one can separate fully a sparse matrix from a low-rank matrix when only the sum of these two components is available.

The rest of the paper is organized as follows. Section 2 gives our estimation procedures and builds the relationship between the principal components analysis and the factor analysis in high-dimensional space.  Section 3 provides the asymptotic theory for various estimated quantities.  Section 4 illustrates how to choose  the thresholds using cross-validation and guarantees the positive definiteness in any finite sample. Specific applications of regularized covariance matrices are given in Section 5. Numerical results are reported in Section 6. Finally,  Section 7 presents a real data application on portfolio allocation. All proofs are given in the appendix. Throughout the paper, we use $\lambda_{\min}(\bA)$ and $\lambda_{\max}(\bA)$ to denote the minimum and maximum eigenvalues of a matrix $\bA$. We also denote by $\|\bA\|_F$,  $\|\bA\|$, $\|\bA\|_1$ and $\|\bA\|_{\max}$ the Frobenius norm,  spectral norm (also called operator norm), $L_1$-norm, and elementwise   norm of a matrix $\bA$, defined respectively by $\|\bA\|_F=\tr^{1/2}(\bA'\bA)$, $\|\bA\|=\lambda_{\max}^{1/2}(\bA'\bA)$, $\|\bA\|_1 =\max_{j} \sum_{i} |a_{ij}|$ and $\|\bA\|_{\max}=\max_{i,j}|a_{ij}|$. Note that  when $\bA$ is a vector, both $\|\bA\|_F$ and $\|\bA\|$ are equal to the Euclidean norm. Finally, for two sequences, we write $a_T\gg b_T$ if $b_T=o(a_T)$ and $a_T\asymp b_T$ if  $a_T=O(b_T)$ and $b_T=O(a_T).$

\section{Regularized Covariance Matrix via PCA}

There are three  main objectives of this paper: (i) understand the relationship between principal component analysis (PCA) and the high-dimensional factor analysis; (ii) estimate both covariance matrices $\Sig$ and the idiosyncratic $\Sig_u$ and their precision matrices in the presence of common factors, and (iii) investigate the impact of estimating the unknown factors on the covariance estimation. The propositions in Section \ref{s2.1} below show  that  the space spanned by the principal components in the population level $\Sig$ is close to the space spanned by the columns of the factor loading matrix $\bB$.

\subsection{High-dimensional PCA and factor model}\label{s2.1}
 Consider a factor model
$$
y_{it}=\bb_i'\bff_t+u_{it}, \hspace{1em}i\leq p, t\leq T,
$$
where the number of common factors, $K=\dim(\bff_t)$,  is small compared to $p$ and $T$, and thus is assumed to be fixed throughout the paper.  In the model, the only observable variable is the data $y_{it}$. One of the distinguished features of the factor model is that  the principal eigenvalues of $\Sig$ are no longer bounded, but growing fast with the dimensionality.
 
 We illustrate this in the  following example.
\begin{exm} Consider a single-factor model
$
y_{it}=b_if_t+u_{it} $ where $b_i\in \mathbb{R}. $ Suppose that the factor is pervasive in the sense that it has non-negligible impact on a non-vanishing proportion of outcomes.  	It is then reasonable to assume $\sum_{i=1}^pb_i^2>cp$ for some $c>0$. Therefore, assuming that $\lambda_{\max}(\Sig_u)=o(p)$, an application of (\ref{eq1.3}) yields,
$$
\lambda_{\max}(\Sig) \geq\var(f_t)\sum_{i=1}^pb_i^2 -\lambda_{\max}(\Sig_u)>\frac{c}{2}\var(f_t)p
$$
for all large $p$, assuming $\var(f_t)>0$.
\end{exm}

We now elucidate why PCA can be used for the factor analysis in the presence of spiked eigenvalues.
Write $\bB=(\bb_1,...,\bb_p)'$  as  the $p\times K$ loading matrix.    Note that the linear space spanned by the first $K$ principal components of $\bB\cov(\bff_t)\bB'$ is the same as that spanned by the columns of $\bB$ when $\cov(\bff_t)$ is non-degenerate.  Thus, we can assume without loss of generality that
the columns of $\bB$ are orthogonal and $\cov(\bff_t) = \bI_K$, the identity matrix.  This canonical form corresponds to the identifiability condition in decomposition (\ref{eq1.3}).  Let ${\tb}_1, \cdots,{\tb}_K$ be the columns of $\bB$, ordered such that $\{\|\tb_j\|\}_{j=1}^K$ is in a non-increasing order. Then, $\{{\tb}_j/\|{\tb}_j\|\}_{j=1}^K$ are eigenvectors of the matrix $\bB\bB'$ with eigenvalues $\{\|{\tb}_j\|^2\}_{j=1}^K$ and the rest zero. We will impose the pervasiveness  assumption that all eigenvalues of the $K\times K$ matrix $p^{-1}\bB'\bB$ are bounded away from zero, which holds if the factor loadings $\{\bb_i\}_{i=1}^p$ are independent realizations from a non-degenerate population.    Since the non-vanishing eigenvalues of the matrix $\bB \bB'$ are the same as those of $\bB'\bB$, from the pervasiveness assumption it follows that $\{\|\tb_j\|^2\}_{j=1}^K$ are all growing at rate $O(p)$.

Let $\{\lambda_j\}_{j=1}^p$ be the eigenvalues of $\bSigma$ in a descending order and $\{\bxi_j\}_{j=1}^p$ be their corresponding eigenvectors.  Then, an application of Weyl's eigenvalue theorem (see the appendix) yields that

\begin{prop}  \label{prop21} Assume that the eigenvalues of $p^{-1}\bB'\bB$ are bounded away from zero for all large $p$. For the factor model (\ref{eq1.3}) with the canonical condition
\begin{equation} \label{eq2.7}
\mbox{$\cov(\bff_t)=\bI_K$ and
$\bB' \bB$ is diagonal},
\end{equation}
we have
$$
  | \lambda_j - \|\tb_j\|^2 | \leq \|\Sig_u\|, \qquad \mbox{for $j \leq K$},
  \qquad | \lambda_j | \leq \|\Sig_u\|, \qquad \mbox{for $j > K$}.
$$
In addition, for $j\leq K$,  $\liminf_{p\rightarrow\infty}\|\tb_j\|^2/p>0$.
\end{prop}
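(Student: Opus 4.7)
The plan is to apply Weyl's inequality to the decomposition $\Sig = \bB \cov(\bff_t) \bB' + \Sig_u$, which under the canonical condition (\ref{eq2.7}) simplifies to $\Sig = \bB\bB' + \Sig_u$. The proof then reduces to identifying the eigenvalues of $\bB\bB'$ explicitly in the canonical form, and invoking the pervasiveness assumption for the last claim.

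First, I would observe that since $\bB'\bB$ is diagonal by assumption, and the columns $\tb_1,\ldots,\tb_K$ are ordered by decreasing norm, the $K\times K$ matrix $\bB'\bB$ equals $\diag(\|\tb_1\|^2,\ldots,\|\tb_K\|^2)$. The non-zero eigenvalues of the $p\times p$ matrix $\bB\bB'$ coincide with the eigenvalues of $\bB'\bB$ (a standard fact, e.g.\ via the SVD of $\bB$), so $\bB\bB'$ has eigenvalues $\|\tb_1\|^2 \geq \cdots \geq \|\tb_K\|^2$ followed by $p-K$ zeros, already listed in non-increasing order.

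Next, I would apply Weyl's eigenvalue perturbation inequality: for any two $p\times p$ symmetric matrices $M$ and $N$, $|\lambda_j(M+N) - \lambda_j(M)| \leq \|N\|$ for every $j$. Taking $M = \bB\bB'$ and $N = \Sig_u$, so that $M+N = \Sig$, we get $|\lambda_j - \lambda_j(\bB\bB')| \leq \|\Sig_u\|$. Substituting $\lambda_j(\bB\bB') = \|\tb_j\|^2$ for $j\leq K$ and $\lambda_j(\bB\bB')=0$ for $j>K$ yields the two bounds stated in the proposition.

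Finally, for the liminf claim, note that the nonzero eigenvalues of $p^{-1}\bB'\bB$ are precisely $\|\tb_j\|^2/p$ for $j=1,\ldots,K$, because $\bB'\bB$ is diagonal. The hypothesis that $\lambda_{\min}(p^{-1}\bB'\bB)$ is bounded away from zero for all large $p$ therefore gives $\liminf_{p\to\infty}\|\tb_j\|^2/p \geq \liminf_{p\to\infty} \lambda_{\min}(p^{-1}\bB'\bB) > 0$ for each $j\leq K$. There is no real obstacle here; the only subtle point is making sure the eigenvalue bookkeeping under the canonical form is correctly aligned with the assumed ordering $\|\tb_1\|\geq\cdots\geq\|\tb_K\|$, after which Weyl's inequality delivers everything in one line.
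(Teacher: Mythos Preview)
Your proposal is correct and follows essentially the same approach as the paper's proof: both identify the eigenvalues of $\bB\bB'$ as $\|\tb_1\|^2,\ldots,\|\tb_K\|^2,0,\ldots,0$ via the canonical form and the correspondence with the eigenvalues of $\bB'\bB$, apply Weyl's theorem to $\Sig=\bB\bB'+\Sig_u$, and then invoke the pervasiveness assumption on $p^{-1}\bB'\bB$ for the $\liminf$ statement. Your write-up is slightly more explicit about the eigenvalue bookkeeping, but the argument is the same.
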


Using Proposition~\ref{prop21} and the $\sin \theta$ theorem of Davis and Kahn (1970, see the appendix), we have the following:

\begin{prop}  \label{prop22}
Under the assumptions of Proposition \ref{prop21}, if $\{\|\tb_j\|\}_{j=1}^K$ are distinct , then
$$
     \| \bxi_j  - {\tb}_j/\|{\tb}_j\| \| = O (p^{-1} \|\Sig_u\|), \qquad \mbox{for $j \leq K$}.
$$
\end{prop}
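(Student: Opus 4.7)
The plan is to view $\Sig=\bB\bB'+\Sig_u$ (using $\cov(\bff_t)=\bI_K$) as a perturbation of the rank-$K$ matrix $\bB\bB'$ by $\Sig_u$, and then invoke the Davis--Kahn $\sin\theta$ theorem. Under the canonical condition, $\bB'\bB$ is diagonal, so the columns $\tb_1,\ldots,\tb_K$ of $\bB$ are mutually orthogonal. The nonzero eigenvalues of $\bB\bB'$ coincide with those of $\bB'\bB$, namely $\|\tb_1\|^2,\ldots,\|\tb_K\|^2$, with corresponding unit eigenvectors $\tb_j/\|\tb_j\|$, and $p-K$ additional zero eigenvalues. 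By Proposition~\ref{prop21} combined with the pervasiveness hypothesis, each $\|\tb_j\|^2\asymp p$.

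The next step is to quantify the eigengap. For $j\le K$, the distance from $\|\tb_j\|^2$ to the remainder of the spectrum of $\bB\bB'$ is
\[
\delta_j \;=\; \min\Bigl\{\min_{k\le K,\,k\neq j}\bigl|\|\tb_j\|^2-\|\tb_k\|^2\bigr|,\; \|\tb_j\|^2\Bigr\},
\]
where the second term accounts for the $p-K$ zero eigenvalues. Since the $\|\tb_j\|^2$ are each of order $p$ and are distinct (interpreted so that their separations remain of the same order as the values themselves under the pervasiveness setup), we get $\delta_j \asymp p$.

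Now apply Davis--Kahn to the pair $(\bB\bB',\Sig)$ with perturbation $\Sig-\bB\bB'=\Sig_u$. The relevant one-dimensional invariant subspace of $\bB\bB'$ is spanned by $\tb_j/\|\tb_j\|$, and the corresponding invariant subspace of $\Sig$ is spanned by $\bxi_j$ (the $j$-th eigenvalue of $\Sig$ is isolated from all other eigenvalues of $\bB\bB'$ by a margin of order $p$, since $|\lambda_j-\|\tb_j\|^2|\le\|\Sig_u\|$ by Proposition~\ref{prop21} and $\|\Sig_u\|=o(p)$ is implicit). This yields
\[
\sin\theta_j \;\le\; \frac{\|\Sig_u\|}{\delta_j}\;=\;O\!\bigl(p^{-1}\|\Sig_u\|\bigr),
\]
where $\theta_j$ is the angle between $\bxi_j$ and $\tb_j/\|\tb_j\|$. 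Finally, choosing the sign of $\bxi_j$ so that $\langle \bxi_j,\tb_j/\|\tb_j\|\rangle\ge 0$,
\[
\bigl\|\bxi_j-\tb_j/\|\tb_j\|\bigr\|^2 \;=\; 2-2\cos\theta_j \;=\; 4\sin^2(\theta_j/2) \;\le\; 2\sin^2\theta_j,
\]
giving the claimed $O(p^{-1}\|\Sig_u\|)$ bound.

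The main obstacle is the interpretation of the distinctness hypothesis: the Davis--Kahn bound gives a useful rate only if the eigengap $\delta_j$ is $\Theta(p)$, not merely positive. Ensuring that $|\|\tb_j\|^2-\|\tb_k\|^2|\asymp p$ for $k\ne j$ requires that the ordered squared loading norms be separated on the same scale as their magnitude; this is the operative reading of the distinctness assumption combined with pervasiveness. Once that is granted, the rest is a clean two-step application of Weyl/Davis--Kahn plus the standard conversion between $\sin\theta$ and Euclidean distance between unit vectors.
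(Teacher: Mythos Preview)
Your proposal is correct and follows essentially the same route as the paper: both view $\Sig=\bB\bB'+\Sig_u$ as a perturbation of the rank-$K$ matrix $\bB\bB'$, invoke the Davis--Kahan $\sin\theta$ theorem with perturbation $\Sig_u$, and argue that the relevant eigengap is of order $p$ under the distinctness assumption (read, as you correctly note, as separation on the scale of $p$). The only cosmetic difference is that the paper uses the mixed-gap form of Davis--Kahan (comparing $\|\tb_j\|^2$ against the neighbouring eigenvalues $\lambda_{j-1},\lambda_{j+1}$ of $\Sig$) and then reduces that to an order-$p$ gap via Proposition~\ref{prop21}, whereas you work directly with the gap inside the spectrum of $\bB\bB'$; both lead to the same bound.
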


Propositions~\ref{prop21} and \ref{prop22} state that PCA and factor analysis are approximately the same if $\|\bSigma_u\| = o(p)$. This is assured through a sparsity condition on $\Sig_u=(\sigma_{u,ij})_{p\times p}$, which is frequently measured through
\begin{equation}\label{eq2.5}
 m_p=\max_{i\leq p}\sum_{ j\leq p}|\sigma_{u,ij}|^q, \quad
 \mbox{for some $q\in[0,1]$.}
\end{equation}
The intuition is that, after taking out the common factors, many pairs of the cross-sectional units become weakly correlated.    This generalized notion of sparsity was used in Bickel and Levina (2008) and Cai and Liu (2011).  Under this generalized measure of sparsity,   we have
$$\|\Sig_u \| \leq \|\Sig_u\|_1 \leq \max_{i} \sum_{j=1}^p |\sigma_{u, ij}|^q (\sigma_{u, ii} \sigma_{u, jj})^{(1-q)/2} = O(m_p),
$$
if the noise variances $\{\sigma_{u, ii}^2\}$ are bounded.  Therefore, when $m_p=o(p)$, Proposition~\ref{prop21} implies that we have distinguished eigenvalues between the principal components $\{\lambda_j\}_{j=1}^K$ and the rest of the components $\{\lambda_j\}_{j=K+1}^p$
and Proposition~\ref{prop22} ensures that the first $K$ principal components are approximately the same as the columns of the factor loadings.

The aforementioned sparsity assumption appears reasonable in empirical applications. Boivin and Ng (2006) conducted an empirical study and showed that  imposing zero correlation between weakly correlated idiosyncratic components improves forecast\footnote{We thank a referee for   this interesting reference.}. More recently, Phan (2012) empirically  estimated the level of sparsity of the idiosyncratic   covariance   using the UK market data.

Recent developments  on random matrix theory, for example, Johnstone and Lu (2009) and Paul (2007), have shown that  when $p/T$ is not negligible, the eigenvalues and eigenvectors of $\Sig$ might not be consistently estimated from the sample covariance matrix. A distinguished feature of the covariance considered in this paper is that  there are some very spiked eigenvalues. By Propositions 2.1 and 2.2,  in the factor model, the pervasiveness condition
\begin{equation}\label{e2.4}
\lambda_{\min}(p^{-1}\bB'\bB)>c>0
\end{equation}
implies that the first $K$ eigenvalues are growing at a rate $p$. Moreover,
when $p$ is large, the principal components
$\{\bxi_j\}_{j=1}^K$ are close to the normalized vectors $\{{\tb}_j\}_{j=1}^K$ when $m_p = o(p)$.  This provides the mathematics for using the first $K$ principal components as a proxy of the space spanned by the columns of the factor loading matrix $\bB$. In addition, due to (\ref{e2.4}), the signals of the first $K$ eigenvalues are  stronger than those of the spiked covariance model considered by  Jung and Marron (2009) and Birnbaum et al. (2012). Therefore, our other conditions  for the consistency of principal components at the population level are  much weaker than those in the spiked covariance literature. On the other hand, this also shows that, under our setting the PCA is  a valid approximation to factor analysis only if $p\rightarrow\infty$.  The fact that the PCA on the sample covariance is inconsistent when $p$ is bounded  was also previously demonstrated in the literature (See e.g., Bai (2003)).

With assumption (\ref{e2.4}), the standard literature on approximate factor models has shown that the PCA on the sample covariance matrix $\hSig_{\sam}$ can consistently estimate the space spanned by the factor loadings (e.g., Stock and Watson (1998), Bai (2003)). Our contribution in Propositions 2.1 and 2.2 is that we connect the high-dimensional  factor model to the principal components, and obtain the  consistency of the spectrum in the population level $\Sig$ instead of the sample level $\hSig_{\sam}$. The spectral consistency also enhances  the results in Chamberlain and Rothschild (1983). This provides the rationale behind the consistency results in the factor model literature.

\subsection{POET}\label{s2.2}

Sparsity assumption directly on $\Sig$ is inappropriate in many applications due to  the presence of common factors. Instead, we propose a nonparametric estimator of $\Sig$ based on the principal component analysis.   Let $\hlam_1\geq\hlam_2\geq\cdots\geq\hlam_p$ be the ordered eigenvalues of the sample covariance matrix $\widehat{\bSigma}_{\sam}$ and $\{\hxi_i\}_{i=1}^p$ be their corresponding eigenvectors. Then the sample covariance has the following spectral decomposition:
\begin{eqnarray}\label{eq2.1}
\widehat{\bSigma}_{\sam}
&=&\sum_{i=1}^K\hlam_i\hxi_i \hxi_i'+\hR_K,
\end{eqnarray}
where $\hR_K=\sum_{i=K+1}^p\hlam_i\hxi_i \hxi_i'=(\hat{r}_{ij})_{p\times p}$ is the principal orthogonal complement, and $K$ is the number of diverging eigenvalues of $\Sig$. Let us first assume $K$ is known.

Now we apply thresholding on $\hR_K$.  Define
\begin{equation}\label{eq2.2}
\hR^{\mathcal{T}}_K=(\hat{r}_{ij}^{\mathcal{T}})_{p\times p},\hspace{1em}\hat{r}_{ij}^{\mathcal{T}}=\begin{cases}
\hat{r}_{ii}, & i=j;\\
s_{ij}(\hat{r}_{ij})I(|\hat{r}_{ij}| \geq \tau_{ij}), & i\neq j.
\end{cases}
\end{equation}
where $s_{ij}(\cdot)$ is a generalized shrinkage function  of   Antoniadis and Fan (2001), employed by Rothman et al. (2009) and Cai and Liu (2011), and $\tau_{ij}>0$ is an  entry-dependent threshold. In particular, the hard-thresholding rule $s_{ij}(x) =x I(|x| \geq \tau_{ij})$ (Bickel and Levina, 2008)  and the constant thresholding parameter $\tau_{ij} = \delta$ are allowed. In practice, it is more desirable to have $\tau_{ij}$ be entry-adaptive.  An example of the adaptive thresholding is
\begin{equation}\label{eq2.3}
   \tau_{ij} = \tau (\hat{r}_{ii} \hat{r}_{jj})^{1/2}, \quad \mbox{for a given $\tau > 0$}
\end{equation}
where $\hat{r}_{ii}$ is the $i^{th}$ diagonal element of $\hR_K$.  This corresponds to applying the thresholding with parameter $\tau$ to the correlation matrix of $\hR_K$.

The estimator of $\Sig$ is then defined as:
\begin{equation}\label{eq2.4}
\hSig_K=\sum_{i=1}^K\hlam_i\hxi_i\hxi_i'+\hR^{\mathcal{T}}_K.
\end{equation}
We will call this estimator the Principal Orthogonal complEment thresholding (POET) estimator.
It is obtained by thresholding the remaining components of the sample covariance matrix, after taking out the first $K$ principal components. One of the attractiveness of POET is that it is optimization-free, and hence is computationally appealing. \footnote{We have written an R package for POET, which outputs the estimated  $\Sig$, $\Sig_u$, $K$, the factors and loadings.}

With the choice of $\tau_{ij}$ in (\ref{eq2.3}) and the hard thresholding rule, our estimator encompasses many popular estimators as its specific cases.  When $\tau = 0$, the estimator is the sample covariance matrix and when $\tau = 1$, the estimator becomes that based on the strict factor model (Fan, Fan, and Lv , 2008).  When $K = 0$, our estimator is the same as the thresholding estimator of Bickel and Levina (2008) and (with a more general thresholding function) Rothman et al. (2009) or the adaptive thresholding estimator of Cai and Liu (2011) with a proper choice of $\tau_{ij}$.

In practice, the number of diverging eigenvalues (or common factors) can be estimated based on the sample covariance matrix. Determining $K$ in a data-driven way is an important topic, and is well understood in the literature.  We will describe the POET with a data-driven $K$ in Section \ref{s2.4}.

\subsection{Least squares point of view}

The POET (\ref{eq2.4}) has an equivalent representation using a constrained least squares method.  The least squares method seeks for $\hLam_K=(\hb_1^K,...,\hb_p^K)'$ and $\hF_K'=(\hf_1^K,...,\hf_T^K)$ such that
\begin{equation} \label{eq2.10}
(\hLam_K, \hF_K)=\arg\min_{\bb_i\in\mathbb{R}^K, \bff_t\in\mathbb{R}^K}\sum_{i=1}^p\sum_{t=1}^T(y_{it}-{\bb}_i'\bff_t)^2,
\end{equation}
subject to the normalization
\begin{equation}\label{eq2.11}
\frac{1}{T}\sum_{t=1}^T\bff_t\bff_t'=\bI_K, \text{ and } \frac{1}{p}\sum_{i=1}^p\bb_i\bb_i'\text{ is diagonal}.
\end{equation}
The constraints (\ref{eq2.11}) correspond to the normalization (\ref{eq2.7}).
Here we assume that the mean of each variable $\{y_{it}\}_{t=1}^T$ has been removed, that is, $Ey_{it}=Ef_{jt}=0$ for all $i\leq p, j\leq K$ and $t\leq T.$ Putting it in a matrix form, the optimization problem can be written as
\begin{eqnarray}   \label{eq2.12}
 &&  \arg\min_{\bB, \bF} \|\bY - \bB \bF' \|_F^2  \\
 && T^{-1}\bF'\bF=\bI_K, \hspace{1em}\bB'\bB \text{ is diagonal.} \nonumber
\end{eqnarray}
where $\bY=(\by_1,...,\by_T)$ and $\bF' = (\bff_1, \cdots, \bff_T)$. For each given $\bF$, the least-squares estimator of $\bB$ is $\bLambda=T^{-1}\bY\bF$, using the constraint (\ref{eq2.11}) on the factors. Substituting this into (\ref{eq2.12}), the objective function now becomes
$
   \|\bY - T^{-1} \bY \bF \bF' \|_F^2 = \mbox{tr} [(\bI_T - T^{-1} \bF \bF') \bY' \bY].
$
The minimizer is now clear:  the columns of $\hF_K/\sqrt{T}$ are the eigenvectors corresponding to the $K$ largest eigenvalues of the $T\times T$ matrix $ \bY'\bY$ and $\hLam_K=T^{-1}\bY\hF_K$ (see e.g., Stock and Watson (2002)).

We will show that under some mild regularity conditions, as $p$ and $T\rightarrow\infty$, $\hb_i^{K'}\hf_t^K$ consistently estimates the true $\bb_i'\bff_t$ uniformly over $i\leq p$ and $t\leq T$. Since $\Sig_u$ is assumed to be sparse,  we can construct an estimator of $\Sig_u$ using the adaptive thresholding method by Cai and Liu (2011) as follows.   Let
$
 \hat{u}_{it}=y_{it}-\hb_i^{K'}\hf_t^K,  \hsig_{ij}=\frac{1}{T}\sum_{t=1}^T\hat{u}_{it}\hat{u}_{jt},  $ and $ \hat{\theta}_{ij}=\frac{1}{T}\sum_{t=1}^T\left(\hat{u}_{it}\hat{u}_{jt}-\hsig_{ij}\right)^2.
$
For some pre-determined decreasing sequence $\omega_T>0$, and large enough $C>0$, define the adaptive threshold parameter as
$
\tau_{ij}=C\sqrt{\hat{\theta}_{ij}}\omega_{T}.
$
The estimated idiosyncratic covariance estimator is then  given by
\begin{equation} \label{eq2.13}
\hSig_{u, K}^{\mathcal{T}}=(\hsig_{ij}^{\mathcal{T}})_{p\times p}, \quad \hsig_{ij}^{\mathcal{T}}=\begin{cases}
\hsig_{ii}, & i=j\\
s_{ij}(\hsig_{ij}), & i\neq j,
\end{cases}
\end{equation}
where for all $z\in\mathbb{R}$ (see Antoniadis and Fan, 2001),
$$
s_{ij}(z)=0 \text{ when } |z|\leq \tau_{ij}, \quad |s_{ij}(z)-z|\leq \tau_{ij}.
$$
It is easy to verify that $s_{ij}(\cdot)$ includes many interesting thresholding functions such as the hard thresholding ($s_{ij}(z)= z I_{(|z|\geq \tau_{ij})}$), soft thresholding ($s_{ij}(z)=\text{sign}(z)(|z|-\tau_{ij})_{+}$),  SCAD, and adaptive lasso (See Rothman et al. (2009)).

Analogous to  the decomposition (\ref{eq1.3}), we  obtain the following substitution estimators
\begin{equation}\label{eq2.14}
\tSig_K=\hLam_K\hLam_K'+\hSig_{u, K}^{\mathcal{T}},
\end{equation}
and by the Sherman-Morrison-Woodbury formula, noting that $\frac{1}{T}\sum_{t=1}^T\hf_t^K\hf_t^{K'}=\bI_K,$
\begin{equation} \label{eq2.15}
(\tSig_K)^{-1}=(\hSig_{u,K}^{\mathcal{T}})^{-1}-(\hSig_{u,K}^{\mathcal{T}})^{-1}
\hLam_K[\bI_K+\hLam_K'(\hSig_{u,K}^{\mathcal{T}})^{-1}
\hLam_K]^{-1}\hLam_K'(\hSig_{u,K}^{\mathcal{T}})^{-1},
\end{equation}

In practice, the true number of factors $K$ might be unknown to us. However, for any determined $K_1\leq p$, we can always construct    either $(\hSig_{K_1}, \hR^{\mathcal{T}}_{K_1})$ as in (\ref{eq2.4}) or  $(\tSig_{K_1}, \hSig_{u,K_1}^{\mathcal{T}})$ as in (\ref{eq2.14}) to estimate $(\Sig,\Sig_u)$. The following theorem shows that for each given $K_1$,  the two estimators based on either regularized PCA or least squares substitution are equivalent.  Similar results were obtained by Bai (2003) when $K_1=K$ and no thresholding was imposed.

\begin{thm} \label{thm2.1} Suppose that  the entry-dependent threshold in (\ref{eq2.2}) is the same as the thresholding parameter used in  (\ref{eq2.13}). Then for any $K_1\leq p$,
the  estimator (\ref{eq2.4}) is equivalent to the substitution estimator (\ref{eq2.14}), that is,  
$$\hSig_{K_1}=\tSig_{K_1},\hspace{2em}
\mbox{and} \hspace{2em} \hSig_{u, K_1}^{\mathcal{T}}=\hR_{K_1}^{\mathcal{T}}.$$
\end{thm}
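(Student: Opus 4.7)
The plan is to reduce everything to a single SVD of the data matrix $\bY$ and show that both constructions recompose the same pieces. Write $\bY = \bU \bD \bV'$ with $\bD = \diag(d_1,\dots,d_{\min(p,T)})$ in decreasing order. Then $\hSig_{\sam} = T^{-1}\bY\bY' = T^{-1}\bU\bD^2\bU'$ and $\bY'\bY = \bV\bD^2\bV'$, so the ordered eigenvectors $\{\hxi_j\}$ of $\hSig_{\sam}$ are the columns of $\bU$ and $\hlam_j = d_j^2/T$, while the top $K_1$ eigenvectors of $\bY'\bY$ (which, per the discussion after (2.12), give the columns of $\hF_{K_1}/\sqrt{T}$) are the first $K_1$ columns of $\bV$. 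Hence $\hF_{K_1} = \sqrt{T}\,\bV_{K_1}$ where $\bV_{K_1}$ collects the first $K_1$ columns of $\bV$.

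From $\hLam_{K_1} = T^{-1}\bY\hF_{K_1}$ and the SVD,
\begin{equation*}
\hLam_{K_1} = T^{-1/2}\,\bU \bD \bV' \bV_{K_1} = T^{-1/2}(d_1 \bu_1,\ldots,d_{K_1}\bu_{K_1}),
\end{equation*}
so $\hLam_{K_1}\hLam_{K_1}' = \sum_{j=1}^{K_1} (d_j^2/T)\,\bu_j \bu_j' = \sum_{j=1}^{K_1} \hlam_j \hxi_j\hxi_j'$. This matches the low-rank piece common to both (\ref{eq2.4}) and (\ref{eq2.14}).

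Next I would compute the residual covariance directly. Writing $\hat{\bU} = \bY - \hLam_{K_1}\hF_{K_1}'$, and using $T^{-1}\hF_{K_1}'\hF_{K_1} = \bI_{K_1}$ together with $T^{-1}\bY\hF_{K_1} = \hLam_{K_1}$,
\begin{equation*}
T^{-1}\hat{\bU}\hat{\bU}' = \hSig_{\sam} - \hLam_{K_1}\hLam_{K_1}' - \hLam_{K_1}\hLam_{K_1}' + \hLam_{K_1}\hLam_{K_1}' = \hSig_{\sam} - \hLam_{K_1}\hLam_{K_1}'.
\end{equation*}
By the previous step this equals $\hR_{K_1}$. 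Therefore the pre-thresholding entries $\hsig_{ij}$ of $T^{-1}\hat{\bU}\hat{\bU}'$ and $\hat r_{ij}$ of $\hR_{K_1}$ coincide; in particular their diagonals and variance estimates $\hat\theta_{ij}$ coincide as well, so any data-driven adaptive threshold $\tau_{ij}$ used in (\ref{eq2.13}) agrees, entry by entry, with the threshold used in (\ref{eq2.2}) by hypothesis. Applying the same generalized shrinkage $s_{ij}(\cdot)$ with the same $\tau_{ij}$ to identical matrices produces identical matrices, giving $\hSig_{u,K_1}^{\mathcal{T}} = \hR_{K_1}^{\mathcal{T}}$. Summing with the common low-rank part yields $\tSig_{K_1} = \hSig_{K_1}$.

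There is essentially no analytic obstacle here; the whole argument is a bookkeeping identification via the SVD. The one subtlety I would be careful about is the treatment of the constraint $\bB'\bB$ diagonal in (\ref{eq2.11}) when some singular values of $\bY$ coincide (non-uniqueness of the eigenvectors), but since Theorem \ref{thm2.1} is an identity between two matrix constructions—not between individual loadings—any consistent choice of eigenbasis on an eigenspace enters symmetrically on both sides of the equality and cancels out. A brief remark handling this indeterminacy, plus the SVD algebra above, should complete the proof.
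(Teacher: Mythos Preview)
Your proof is correct and follows essentially the same route as the paper: both arguments establish $\hLam_{K_1}\hLam_{K_1}'=\sum_{j=1}^{K_1}\hlam_j\hxi_j\hxi_j'$ and then deduce $T^{-1}\hat{\bU}\hat{\bU}'=\hR_{K_1}$ from the expansion of the residual covariance. The only cosmetic difference is that the paper obtains the first identity by ``switching the roles of $\bB$ and $\bF$'' in the constrained least-squares problem (so that the solution $\tilde{\bLambda}$ consists of the leading eigenvectors $\hxi_j$ and then $\hLam\hF'=\tilde{\bLambda}\tilde{\bF}'$ gives the result), whereas you write down the SVD of $\bY$ explicitly and read off all pieces; these are two phrasings of the same Eckart--Young fact.
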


In this paper, we will use a data-driven $\widehat{K}$  to construct  the POET (see Section 2.4 below), which has two equivalent representations according to Theorem \ref{thm2.1}.

\subsection{POET with Unknown $K$}\label{s2.4}

 Determining the number of factors in a data-driven way has been an important research topic in the econometric literature.  Bai and Ng (2002) proposed a consistent estimator as both $p$ and $T$ diverge.  Other recent criteria are proposed by Kapetanios (2010), Onatski (2010),  Alessi et al. (2010), etc.

Our method also allows  a data-driven $\widehat{K}$ to estimate the covariance matrices. In principle, any procedure that gives a consistent estimate of $K$ can be adopted. In this paper we apply the well-known method in Bai and Ng (2002). It estimates $K$ by
\begin{equation}\label{eq2.16add}
\widehat{K}=\arg\min_{0\leq K_1\leq  M}\log\left\{\frac{1}{pT}\|\bY-T^{-1}\bY\hF_{K_1}\hF_{K_1}'\|_F^2\right\}+K_1g(T,p),
\end{equation}
where $M$ is a prescribed upper bound, $\hF_{K_1}$ is a $T\times K_1$ matrix whose columns are $\sqrt{T}$ times the eigenvectors corresponding to the $K_1$ largest eigenvalues of the $T\times T$ matrix $\bY'\bY$; $g(T,p)$ is a penalty  function of $(p, T)$ such that $g(T,p)=o(1)$ and $\min\{p, T\}g(T,p)\rightarrow\infty.$ Two examples suggested by Bai and Ng  (2002) are
$$
\mbox{IC1}: g(T,p)=\frac{p+T}{pT}\log\left(\frac{pT}{p+T}\right),
$$
$$
\mbox{IC2}: g(T,p)=\frac{p+T}{pT}\log\min\{p, T\}.
$$

Throughout the paper, we let $\widehat{K}$ be the solution to (\ref{eq2.16add}) using either IC1 or IC2. The asymptotic results are not affected regardless of the specific choice of $g(T,p)$. We define the POET estimator with unknown $K$ as
\begin{equation}\label{eq2.16}
\hSig_{\widehat{K}}=\sum_{i=1}^{\widehat{K}}\hlam_i\hxi_i\hxi_i'+\hR^{\mathcal{T}}_{\widehat{K}}.
\end{equation}
 The procedure is as stated in Section \ref{s2.2} except that $\widehat K$ is now data-driven.

\section{Asymptotic Properties}

\subsection{Assumptions}

This section presents the assumptions  on the model (\ref{eq1.2}), in which only $\{\by_t\}_{t=1}^T$ are observable.  Recall  the  identifiability condition (\ref{eq2.7}).

The first assumption     has been one of the most  essential ones in the literature of approximate factor models. Under this assumption and other regularity conditions, the number of factors,  loadings and common factors can be consistently estimated (e.g., Stock and Watson (1998, 2002), Bai and Ng (2002), Bai (2003), etc.).

\begin{assum}\label{a35}  All the eigenvalues of the  $K\times K$  matrix $p^{-1}{\bB}'{\bB}$ are  bounded away from both zero and infinity as $p\rightarrow\infty$.
\end{assum}

\begin{remark}
\begin{enumerate}
  \item
It implies from Proposition 2.1 in Section 2  that the first $K$ eigenvalues of $\Sig$ grow at rate $O(p)$. This unique feature distinguishes our work from most of other low-rank plus sparse covariances considered in the literature, e.g.,   Luo (2011), Pati et al. (2012), Agarwal et al. (2012), Birnbaum et al. (2012). \footnote{To our best knowledge, the only other papers that estimate large covariances with   diverging eigenvalues (growing at the rate of dimensionality $O(p)$) are Fan et al. (2008, 2011) and Bai and  Shi (2011). While Fan et al. (2008, 2011) assumed the factors are observable, Bai and Shi (2011) considered the strict factor model in which $\Sig_u$ is diagonal.}

\item Assumption 3.1 requires the factors to be pervasive, that is, to impact a non-vanishing proportion of individual time series. See Example 2.1 for its meaning.    \footnote{It is important to distinguish the model we consider in this paper from the  ``sparse factor model" in the literature, e.g., Carvalho et al. (2009), Pati et al. (2012), which  assumes that  the loading matrix $\bB$ is sparse The intuition of a sparse loading matrix is that each factor is related to only a relatively small number of stocks, assets, genes, etc.  With $\bB$ being sparse,   all the eigenvalues of $\bB'\bB$ and hence those of $\Sig$ are bounded.}

\item  As to be illustrated in Section 3.3 below, due to the fast diverging eigenvalues,  one can hardly achieve a good rate of convergence for estimating $\Sig$ under either the spectral norm or Frobenius norm when $p>T$. This phenomenon arises naturally from the characteristics of the high-dimensional  factor model, which is another distinguished feature compared to those convergence results in the existing literature.

\end{enumerate}

\end{remark}

\begin{assum}\label{a21} (i)
 $\{\bu_t, \bff_t\}_{t\geq1}$ is strictly stationary. In addition, $Eu_{it}=Eu_{it}f_{jt}=0$ for all $i\leq p, j\leq K$ and $t\leq T.$
\\
(ii) There exist  constants $c_1, c_2>0$   such that $\lambda_{\min}(\Sig_u)>c_1$, $\|\Sig_u\|_1<c_2,$
 and $\min_{i\leq p, j\leq p}\var(u_{it}u_{jt})>c_1.$\\
(iii)   There exist $r_1, r_2>0$ and $b_1, b_2>0$, such that for any $s>0$, $i\leq p$ and $j\leq K$,
\begin{equation*}
P(|u_{it}|>s)\leq\exp(-(s/b_1)^{r_1}), \quad P(|f_{jt}|>s)\leq \exp(-(s/b_2)^{r_2}).
\end{equation*}
\end{assum}

Condition (i)  requires strict stationarity as well as the non-correlation between $\{\bu_{t}\}$ and $\{\bff_t\}$. These conditions are slightly stronger than those in the literature, e.g., Bai (2003), but are still standard and simplify our  technicalities.  Condition (ii) requires that $\Sig_u$ be well-conditioned.  The condition $\|\Sig_u\|_1\leq c_2$ instead of a weaker condition $\lambda_{\max}(\Sig_u)\leq c_2$ is imposed here in order to consistently estimate $K$. But it  is still standard in the approximate factor model literature as in  Bai and Ng (2002), Bai (2003), etc.    When $K$ is known, such a condition can be removed. Our working  paper\footnote{See Fan, Liao and Mincheva (2011), working paper, arxiv.org/pdf/1201.0175.pdf} shows that the results continue to hold for a growing (known) $K$ under the weaker condition $\lambda_{\max}(\Sig_u)\leq c_2$. Condition (iii) requires     exponential-type tails, which allows us to apply the large deviation theory to $\frac{1}{T}\sum_{t=1}^Tu_{it}u_{jt}-\sigma_{u,ij}$ and $\frac{1}{T}\sum_{t=1}^Tf_{jt}u_{it}$.

We impose the strong mixing condition.  Let $\mathcal{F}_{-\infty}^0$ and $\mathcal{F}_{T}^{\infty}$ denote the $\sigma$-algebras generated by $\{(\bff_t,\bu_t): t\leq 0\}$ and  $\{(\bff_t,\bu_t):  t\geq T\}$ respectively. In addition, define the mixing coefficient
\begin{equation} \label{eq3.1}
\alpha(T)=\sup_{A\in\mathcal{F}_{-\infty}^0, B\in\mathcal{F}_{T}^{\infty}}|P(A)P(B)-P(AB)|.
\end{equation}
\begin{assum} \label{a32}
Strong mixing: There exists   $r_3>0$ such that $3r_1^{-1}+1.5r_2^{-1}+r_3^{-1}>1$, and $C>0$ satisfying:  for all $T\in\mathbb{Z}^+$,
$$\alpha(T)\leq \exp(-CT^{r_3}).$$
\end{assum}

In addition, we impose the following regularity conditions.
\begin{assum}\label{a33}   There exists $M>0$  such that  for all $i\leq p$,  $t\leq T$ and $s\leq T$,\\
(i) $\|\bb_{i}\|_{\max}<M$,\\
(ii)  $E[p^{-1/2}({\bu}_s'{\bu}_t-E{\bu}_s'{\bu}_t)]^4<M$,\\
(iii) $E\|p^{-1/2}\sum_{i=1}^p\bb_iu_{it}\|^4<M$.
\end{assum}

These conditions  are needed to consistently estimate the transformed common factors  as well as the factor loadings. Similar conditions were also assumed in Bai (2003), and Bai and Ng (2006). The number of factors is assumed to be fixed. Our conditions in Assumption \ref{a33} are weaker than those in Bai (2003) as we focus on different aspects of the study.

\subsection{Convergence of the idiosyncratic covariance}
Estimating  the covariance matrix $\Sig_u$ of the idiosyncratic components $\{\bu_t\}$ is important for many statistical inferences. For example, it is needed for large sample inference of the unknown factors and their loadings, for testing the capital asset pricing model (Sentana, 2009), and large-scale hypothesis testing (Fan, Han and Gu, 2012).  See Section 5.

We estimate $\Sig_u$ by thresholding the principal orthogonal complements after   the first $\widehat{K}$ principal components of the sample covariance  are taken out:
$
\hSig_{u,\widehat{K}}^{\mathcal{T}}=\hR_{\widehat{K}}^{\mathcal{T}}.
$
By Theorem \ref{thm2.1}, it also has an equivalent expression  given by (\ref{eq2.13}), with $\hat{u}_{it}=y_{it}-(\hb_i^{\widehat{K}})'\hf_t^{\widehat{K}}$.
 
Throughout the paper, we apply the  adaptive threshold
\begin{equation}\label{eq3.3}
   \tau_{ij}= C\sqrt{\hat{\theta}_{ij}} \omega_T, \quad \omega_T=\frac{1}{\sqrt{p}}+\sqrt{\frac{\log p}{T}}
\end{equation}
where $C>0$ is a sufficiently large constant, though the results hold for other types of thresholding.   As in Bickel and Levina (2008) and Cai and Liu (2011), the  threshold chosen in the current paper  is in fact obtained from the optimal uniform rate of convergence of  $\max_{i\leq p, j\leq p}|\hsig_{ij}-\sigma_{u,ij}|.$  When direct observation of $u_{it}$ is not available,  the effect of estimating the unknown factors also contributes to this uniform estimation error, which is why $p^{-1/2}$ appears in the threshold.

The following theorem gives the rate of convergence of the estimated idiosyncratic covariance.  Let $\gamma^{-1}=3r_1^{-1}+1.5r_2^{-1}+r_3^{-1}+1$.  In the convergence rate   below, recall that $m_p$ and $q$ are defined in the measure of sparsity (\ref{eq2.5}).

\begin{thm} \label{thm31}
Suppose $\log p=o(T^{\gamma/6})$, $ T=o(p^2)$, and Assumptions \ref{a35}-\ref{a33} hold. Then for a sufficiently large constant $C>0$ in the threshold (\ref{eq3.3}),  the POET estimator $\hSig_{u,\widehat{K}}^{\mathcal{T}}$  satisfies
$$
    \|\hSig_{u,\widehat{K}}^{\mathcal{T}}-\Sig_u\|=O_p\left( \omega_T^{1-q}m_p \right).
$$
If further $\omega_T^{1-q}m_p =o(1)$, then the eigenvalues of $\hSig_{u,\widehat{K}}^{\mathcal{T}}$ are all bounded away from zero with probability approaching  one, and
$$\|(\hSig_{u,\widehat{K}}^{\mathcal{T}})^{-1}-\Sig_u^{-1}\|=O_p\left(\omega_T^{1-q}m_p\right).$$
\end{thm}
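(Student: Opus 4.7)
The plan is to reduce the problem to an adaptive thresholding analysis applied to a proxy sample covariance matrix of the $\bu_t$'s, in the spirit of Bickel--Levina (2008) and Cai--Liu (2011), but with a modified noise level that absorbs the cost of estimating the factors. First I would invoke the Bai--Ng (2002) consistency result referenced after (\ref{eq2.16add}) to get $P(\widehat K=K)\to 1$; on the complementary (vanishing-probability) event everything can be absorbed into the $O_p$. By Theorem \ref{thm2.1}, on $\{\widehat K=K\}$ the estimator $\hSig_{u,\widehat K}^{\mathcal{T}}$ equals the adaptive-thresholded matrix $(\hsig_{ij}^{\mathcal{T}})$ built from the residuals $\hat u_{it}=y_{it}-(\hb_i^K)'\hf_t^K$, so the entire analysis boils down to studying $(\hsig_{ij})$.

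The heart of the proof is the uniform bound
\[
\max_{i,j\leq p}|\hsig_{ij}-\sigma_{u,ij}|=O_p(\omega_T),\qquad \max_{i,j\leq p}|\hat\theta_{ij}-\var(u_{it}u_{jt})|=o_p(1),
\]
where $\omega_T=p^{-1/2}+\sqrt{(\log p)/T}$. Writing $\hat u_{it}-u_{it}=\bb_i'\bff_t-(\hb_i^K)'\hf_t^K$, I would split $\hsig_{ij}-\sigma_{u,ij}$ into the ``oracle'' sampling error $T^{-1}\sum_t u_{it}u_{jt}-\sigma_{u,ij}$ and a remainder driven by the factor/loading estimation error. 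The oracle term is $O_p(\sqrt{(\log p)/T})$ by the exponential tails (Assumption~\ref{a21}(iii)) and strong mixing (Assumption~\ref{a32}) combined with a Bernstein-type inequality and a union bound over the $p^2$ pairs, using $\log p=o(T^{\gamma/6})$. The remainder term is controlled by establishing uniform consistency of the PC-based estimators $\{(\hb_i^K,\hf_t^K)\}$ (using Assumption~\ref{a33}, in the manner of Bai 2003 and Bai--Ng 2006, together with Propositions~\ref{prop21}--\ref{prop22} of Section~\ref{s2.1} that tie PCA to factor analysis); the additional $p^{-1/2}$ appearing in $\omega_T$ arises here because the loadings are identified only up to an $O(p^{-1/2})$-perturbation of their transformation.

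Given the uniform bound, the spectral-norm rate follows from a standard thresholding calculation. Using symmetry, $\|\hSig_{u,\widehat K}^{\mathcal T}-\Sig_u\|\leq \|\hSig_{u,\widehat K}^{\mathcal T}-\Sig_u\|_1$, and it suffices to bound $\max_i\sum_{j\neq i}|s_{ij}(\hsig_{ij})-\sigma_{u,ij}|$ plus the diagonal error. On the high-probability event $\{\max_{ij}|\hsig_{ij}-\sigma_{u,ij}|\leq C\omega_T/2\}$, I split each row sum according to whether $|\sigma_{u,ij}|\leq c\omega_T$ or $>c\omega_T$. The first group is controlled by $\sum_j|\sigma_{u,ij}|^q(c\omega_T)^{1-q}$ using the $L^q$-sparsity (\ref{eq2.5}) together with the thresholding property $|s_{ij}(z)-z|\leq \tau_{ij}$; the second group has at most $m_p\omega_T^{-q}$ terms by sparsity, and each contributes $O(\omega_T)$. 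Summing gives $\|\hSig_{u,\widehat K}^{\mathcal T}-\Sig_u\|=O_p(m_p\omega_T^{1-q})$.

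For the inverse, assuming $m_p\omega_T^{1-q}=o(1)$, Weyl's inequality applied to $\hSig_{u,\widehat K}^{\mathcal T}$ and $\Sig_u$ together with $\lambda_{\min}(\Sig_u)>c_1$ (Assumption~\ref{a21}(ii)) gives $\lambda_{\min}(\hSig_{u,\widehat K}^{\mathcal T})\geq c_1/2$ w.p.a.\ 1, so $\|(\hSig_{u,\widehat K}^{\mathcal T})^{-1}\|\leq 2/c_1$; combining this with $\|\Sig_u^{-1}\|\leq c_1^{-1}$ and the identity $A^{-1}-B^{-1}=-A^{-1}(A-B)B^{-1}$ yields the stated rate. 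The main obstacle is the uniform bound on $\max_{ij}|\hsig_{ij}-\sigma_{u,ij}|$: it requires delicate uniform control of the PCA-based factor and loading estimates under weak dependence, which is precisely where the exponential tails, strong mixing, and moment conditions in Assumptions~\ref{a21}--\ref{a33} enter in an essential way, and where the extra $p^{-1/2}$ term (rather than merely $\sqrt{(\log p)/T}$) enters the noise level.
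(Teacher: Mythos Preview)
Your proposal is correct and follows essentially the same route as the paper. The paper packages the argument modularly: it first proves a general ``contaminated data'' thresholding theorem (Theorem~\ref{tb1} in the appendix), whose hypothesis is the time-averaged residual bound $\max_{i}\frac{1}{T}\sum_t(u_{it}-\hat u_{it})^2=O_p(\omega_T^2)$ together with $\max_{i,t}|u_{it}-\hat u_{it}|=o_p(1)$, and then verifies these two conditions in a separate lemma using exactly the decomposition $u_{it}-\hat u_{it}=\bb_i'\bH'(\hf_t-\bH\bff_t)+(\hb_i-\bH\bb_i)'\hf_t+\bb_i'(\bH'\bH-\bI_K)\bff_t$ and Theorem~\ref{thm33}. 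From there the uniform bound on $\max_{ij}|\hsig_{ij}-\sigma_{u,ij}|$ and on $\hat\theta_{ij}$ follows by Cauchy--Schwarz, and the thresholding calculation, the Weyl step, and the $A^{-1}-B^{-1}$ identity are exactly as you describe.

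Two small remarks. First, Propositions~\ref{prop21}--\ref{prop22} are population-level statements and are not invoked in the sample-level argument; the control of $\hf_t-\bH\bff_t$ and $\hb_i-\bH\bb_i$ goes through Bai's (2003) identity and the auxiliary moment lemmas, so you should point there rather than to Section~\ref{s2.1}. Second, the extra $p^{-1/2}$ in $\omega_T$ originates specifically from $\frac{1}{T}\sum_t\|\hf_t-\bH\bff_t\|^2=O_p(T^{-1}+p^{-1})$ (factor estimation error), not from a loading-identification perturbation per se; your description is morally right but slightly misattributes the source.
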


When estimating $\Sig_u$, $p$ is allowed to grow exponentially fast in $T$, and $\hSig_{u,\widehat{K}}^{\mathcal{T}}$ can be made consistent under the spectral norm. In addition, $\hSig_{u,\widehat{K}}^{\mathcal{T}}$ is asymptotically invertible while the classical sample covariance matrix based on the residuals is not when $p>T.$

\begin{remark} \label{rem3.3}
\begin{enumerate}
\item Consistent estimation of $\Sig_u$ indicates that $\Sig_u$ is identifiable in (\ref{eq1.3}), namely, the sparse $\Sig_u$ can be separated perfectly from the low-rank matrix there.  The result here gives another proof (when assuming $\omega_T^{1-q}m_p=o(1)$)  of the ``surprising phenomenon" in Cand\`{e}s et al (2011) under different technical conditions.

\item
Fan, Liao and Mincheva (2011) recently showed that when $\{\bff_t\}_{t=1}^T$ are observable and $q=0$, the rate of convergence of the adaptive thresholding estimator is given by
$\|\hSig_u^{\mathcal{T}}-\Sig_u\| =O_p\left(m_p\sqrt{\frac{\log p}{T}}\right)=\|(\hSig_u^{\mathcal{T}})^{-1}-\Sig_u^{-1}\|.  $
Hence when the common factors are unobservable, the rate of convergence has an additional  term $m_p/\sqrt{p}$, coming from the impact of estimating the unknown factors. This impact vanishes when $p\log p \gg T$, in which case  the minimax rate as in Cai and Zhou (2010) is achieved. As $p$ increases, more information about the common factors is collected, which results in more accurate estimation of the common factors $\{\bff_t\}_{t=1}^T$.   

\item When $K$ is known and grows with $p$ and  $T$, with slightly weaker assumptions,  our working paper  (Fan et al. 2011) shows that under the exactly sparse case (that is, $q=0$), the result continues to hold with convergence rate $m_p(K^2 \sqrt{\frac{\log p}{T}} + \frac{K^3}{\sqrt{p}})$.

\end{enumerate}
\end{remark}

\subsection{Convergence of the POET estimator}

Since the first $K$ eigenvalues of $\Sig$ grow with $p$,  one can hardly estimate $\Sig$ with satisfactory accuracy  in the absolute term. This problem arises not from the limitation of any estimation method, but is due to the nature of the high-dimensional  factor model. We illustrate this using a simple example.

\begin{exm}   \label{exam31}
Consider an ideal case  where we know the spectrum except for the first eigenvector  of $\Sig$.
Let $\{\lambda_j, \bxi_j\}_{j=1}^p$ be the eigenvalues and vectors, and assume that  the largest eigenvalue  $\lambda_1\geq cp$ for some $c>0$.  Let $\hxi_1$ be the estimated first eigenvector and define the covariance estimator
$
\hSig=\lambda_1\hxi_1\hxi_1'+\sum_{j=2}^{p}\lambda_j\bxi_j\bxi_j'.
$
Assume that $\hxi_1$ is a good estimator in the sense that $\|\hxi_1-\bxi_1\|^2=O_p(T^{-1})$. However,
$$
\|\hSig-\Sig\|=\|\lambda_1(\hxi_1\hxi_1'-\bxi_1\bxi_1')\|=\lambda_1O_p(\|\hxi-\bxi\|)=O_p(\lambda_1T^{-1/2}),
$$
which can diverge when $T=O(p^2)$. $\square$
\end{exm}

In the presence of very spiked eigenvalues, while  the covariance $\Sig$ cannot be  consistently estimated in absolute term,  it can be   well estimated in terms of the \textit{relative error} matrix
$$
\Sig^{-1/2}\hSig \Sig^{-1/2} - \bI_p
$$
which is more relevant for many applications (see Example 5.2).  The relative error matrix can be measured by either its  spectral norm or the normalized Frobenius norm defined by
\begin{equation} \label{eq.fan}
 p^{-1/2}\|\Sig^{-1/2}\hSig \Sig^{-1/2} - \bI_p\|_F = \left(p^{-1}\tr[(\Sig^{-1/2}\hSig \Sig^{-1/2}-\bI_p)^2]\right)^{1/2}.
\end{equation}
In the last equality, there are $p$ terms being added in the trace operation and the factor $p^{-1}$ plays the role of normalization.  The loss (\ref{eq.fan}) is closely related to the entropy loss, introduced by James and Stein (1961). Also note that
$$
 p^{-1/2}\|\Sig^{-1/2}\hSig \Sig^{-1/2} - \bI_p\|_F=\|\hSig-\Sig\|_{\Sigma}
$$
where $
\|\bA\|_{\Sigma}=p^{-1/2}\|\Sig^{-1/2}\bA\Sig^{-1/2}\|_F $ is the weighted quadratic norm in Fan et al (2008).

Fan et al. (2008) showed that in a large factor model, the sample covariance is such that
$
\|\hSig_{\sam}-\Sig\|_{\Sigma}=O_p(\sqrt{p/T}),
$
which does not converge if $p>T$.  On the other hand, Theorem \ref{thm32} below shows that $\|\hSig_{\widehat{K}}-\Sig\|_{\Sigma}$ can still be convergent as long as $p=o(T^2)$.   Technically, the impact of high-dimensionality on the convergence rate of $\hSig_{\widehat{K}}-\Sig$ is via the number of rows in $\bB$. We show in the appendix that  $\bB$ appears in $\|\hSig_{\widehat{K}}-\Sig\|_{\Sigma}$ through $\bB'\Sig^{-1}\bB$ whose eigenvalues are bounded. Therefore it successfully cancels out the curse of high-dimensionality introduced by $\bB$.

Compared to estimating $\Sig$,  in a large approximate factor model,  we can estimate the precision matrix with a satisfactory rate under the spectral norm.    The intuition follows from the fact that $\Sig^{-1}$ has bounded eigenvalues.

The following theorem summarizes the rate of convergence under various norms.

\begin{thm}\label{thm32}  Under the assumptions of Theorem \ref{thm31}, the POET estimator defined in (\ref{eq2.16}) satisfies
  $$\|\hSig_{\widehat{K}}-\Sig\|_{\Sigma}=O_p\left(\frac{\sqrt{p}\log p}{T}+m_p\omega_T^{1-q} \right),\quad
\|\hSig_{\widehat{K}}-\Sig\|_{\max}=O_p(\omega_T).$$
In addition, if $m_p\omega_T^{1-q} =o(1)$, then
 $\hSig_{\widehat{K}}  $ is nonsingular with probability approaching one, with
  $$
\|\hSig_{\widehat{K}}^{-1}-\Sig^{-1}\|=O_p\left(m_p\omega_T^{1-q}  \right).
$$
\end{thm}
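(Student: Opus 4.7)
The plan is as follows. By the consistency of the Bai--Ng information criterion under Assumptions~\ref{a35}--\ref{a33}, the event $\{\widehat{K} = K\}$ has probability tending to one, so I may work throughout with $\hSig_K$ and exploit the two equivalent representations
\[
\hSig_K \;=\; \sum_{i=1}^K \hlam_i \hxi_i \hxi_i' + \hR_K^{\mathcal{T}} \;=\; \hLam_K \hLam_K' + \hSig_{u,K}^{\mathcal{T}}
\]
guaranteed by Theorem~\ref{thm2.1}. Two basic facts I will use throughout: $\|\Sig^{-1}\| = O(1)$ and $\bB'\Sig^{-1}\bB$ has bounded operator norm, both derived by applying the Sherman--Morrison--Woodbury formula to $\Sig = \bB\bB' + \Sig_u$ together with $\lambda_{\min}(\Sig_u) > c_1$ from Assumption~\ref{a21}. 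These facts supply the crucial cancellation that keeps the high-dimensional factor structure from blowing up the rates.

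For the elementwise maximum norm, I write $\hSig_K - \Sig = (\hSig_K - \hSig_{\sam}) + (\hSig_{\sam} - \Sig)$. On the diagonal $\hSig_K$ and $\hSig_{\sam}$ agree, while off the diagonal the entries of $\hSig_K - \hSig_{\sam} = \hR_K^{\mathcal{T}} - \hR_K$ are bounded by the thresholds $\tau_{ij} = O(\omega_T)$. A Bernstein-type large-deviation bound under the exponential-tail Assumption~\ref{a21}(iii) and the strong mixing of Assumption~\ref{a32}, combined with a union bound over $p^2$ entries, gives $\|\hSig_{\sam} - \Sig\|_{\max} = O_p(\sqrt{\log p/T})$, hence the stated max-norm rate.

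For the weighted quadratic norm I split via
\[
\hSig_K - \Sig \;=\; (\hLam_K \hLam_K' - \bB\bB') \;+\; (\hSig_{u,K}^{\mathcal{T}} - \Sig_u).
\]
The sparse remainder is handled by the chain $\|\cdot\|_\Sigma \leq p^{-1/2}\|\Sig^{-1}\|\,\|\cdot\|_F \leq \|\Sig^{-1}\|\,\|\cdot\|$, which together with $\|\Sig^{-1}\| = O(1)$ and Theorem~\ref{thm31} yields the $O_p(m_p\omega_T^{1-q})$ contribution. For the low-rank piece, the least-squares characterization of $\hLam_K$ combined with Assumption~\ref{a33} gives a uniform loading rate $\max_{i\leq p}\|\hb_i^K - \bH\bb_i\| = O_p(\omega_T)$ for some $K\times K$ rotation $\bH$ with $\bH'\bH \to \bI_K$. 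A row-wise expansion of $\hLam_K\hLam_K' - \bB\bB'$ into cross-terms of the form $(\hLam_K - \bB\bH')\bH\bB'$ plus a quadratic residual, sandwiched between $\Sig^{-1/2}$, exploits $\bB'\Sig^{-1}\bB = O(1)$ to eliminate the two factors of $\sqrt{p}$ hidden in $\bB$, producing the refined rate $\sqrt{p}\log p/T$.

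Finally, for the inverse I apply the Sherman--Morrison--Woodbury identity (\ref{eq2.15}) and its analogue for $\Sig^{-1}$, then decompose their difference into a finite number of terms, each involving either $(\hSig_{u,K}^{\mathcal{T}})^{-1} - \Sig_u^{-1}$ (controlled at rate $O_p(m_p\omega_T^{1-q})$ by Theorem~\ref{thm31}) or the loading discrepancy $\hLam_K - \bB\bH'$. Because the inner matrices $\bI_K + \bB'\Sig_u^{-1}\bB$ and $\bI_K + \hLam_K'(\hSig_{u,K}^{\mathcal{T}})^{-1}\hLam_K$ have operator norm of order $p$, their inverses contribute a factor $O(1/p)$ that absorbs the $\sqrt{p}$ growth coming from $\|\bB\|$, making the loading-discrepancy terms of strictly smaller order. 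The main obstacle is the low-rank part of the weighted quadratic bound: getting the sharp $\sqrt{p}\log p/T$ rather than the crude $\sqrt{p}\omega_T$ requires carefully exploiting the $\Sig^{-1/2}\bB$ cancellation instead of bluntly bounding Frobenius norms, and keeping track of the deviation of $\bH'\bH$ from $\bI_K$.
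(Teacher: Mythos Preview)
Your proposal is correct and tracks the paper's argument closely for the $\|\cdot\|_\Sigma$ bound and the inverse: the same decomposition $\hLam_K\hLam_K' - \bB\bB' = \bB(\bH'\bH-\bI_K)\bB' + \bB\bH'\bC_T' + \bC_T\bH\bB' + \bC_T\bC_T'$ with $\bC_T=\hLam_K-\bB\bH'$, the same cancellation via $\|\bB'\Sig^{-1}\bB\|=O(1)$, and the same Sherman--Morrison--Woodbury expansion exploiting $\|[\bI_K+\hLam_K'(\hSig_{u,K}^{\mathcal T})^{-1}\hLam_K]^{-1}\|=O_p(p^{-1})$. One clarification: the $\sqrt{p}\log p/T$ contribution comes from the \emph{quadratic} piece $\|\bC_T\bC_T'\|_\Sigma\le p^{-1/2}\|\Sig^{-1}\|\,\|\bC_T\|_F^2=O_p(\sqrt{p}\,\omega_T^2)$, which involves no $\bB$; the $\bB'\Sig^{-1}\bB$ cancellation is what drives the cross terms and the $\bB(\bH'\bH-\bI_K)\bB'$ term down to $O_p(\omega_T)$ and $O_p(p^{-1}+(pT)^{-1/2})$, both absorbed into $m_p\omega_T^{1-q}$.

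For the max-norm you take a genuinely different route. The paper splits along the factor decomposition, bounding $\max_{i,j}|\hb_i'\hb_j-\bb_i'\bb_j|$ directly from the uniform loading rate and then $\|\hSig_{u,K}^{\mathcal T}-\Sig_u\|_{\max}$ via the thresholding rule. You instead write $\hSig_K-\Sig=(\hR_K^{\mathcal T}-\hR_K)+(\hSig_{\sam}-\Sig)$: the first piece is entrywise bounded by the thresholds $\tau_{ij}$, and the second by a Bernstein/union bound. This is valid and arguably cleaner, since it bypasses the loading expansion entirely; the only extra ingredient you need is the uniform boundedness of $\hat\theta_{ij}$ so that $\max_{i,j}\tau_{ij}=O_p(\omega_T)$, which is established in the paper's proof of Theorem~\ref{tb1}. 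The paper's route, on the other hand, reuses machinery already needed for the other two norms and makes the role of the factor structure more transparent.
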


\begin{remark}

\begin{enumerate}
\item
When estimating $\Sig^{-1}$, $p$ is allowed to grow exponentially fast in $T$, and the estimator has the same rate of convergence as that of the estimator $\hSig_{u,\widehat{K}}^{\mathcal{T}}$ in Theorem~\ref{thm31}.  When $p$ becomes much larger than $T$, the precision matrix can be estimated  at the same rate as if the factors were observable.  
\item
As in Remark~\ref{rem3.3}, when $K>0$ is known and grows with $p$ and   $T$,  the working paper Fan et al. (2011) proves the following   results (when $q=0$) \footnote{The assumptions in the working paper Fan et al. (2011) are slightly weak than those presented here, in that it required $\lambda_{\max}(\Sig_u)$ instead of $\|\Sig_u\|_1$ be bounded.}:
\begin{eqnarray*}
&& \|\hSig^{\mathcal{T}}-\Sig\|_{\Sigma}=O_p\left(\frac{K\sqrt{p}\log p}{T}+K^2m_p\sqrt{\frac{\log p}{T}}+\frac{m_pK^3}{\sqrt{p}} \right) ,\\
&&\|\hSig^{\mathcal{T}}-\Sig\|_{\max}=O_p\left(K^3\sqrt{\frac{\log p}{T}}+ \frac{K^3}{\sqrt{p}}\right),\\
&&\|(\hSig^{\mathcal{T}})^{-1}-\Sig^{-1}\|=O_p\left( K^2 m_p\sqrt{\frac{\log p}{T}} + \frac{K^3m_p}{\sqrt{p}}\right),
\end{eqnarray*}
The results state explicitly the dependence of the rate of convergence on the number of factors.

\item The relative error $\|\Sig^{-1/2}\hSig_{\hat{K}} \Sig^{-1/2} - \bI_p\|$ in operator norm can be shown to have the same order as the maximum relative error of estimated eigenvalues.  It does not converge to zero nor diverge. It is much smaller than $\|\hSig_{\hat{K}} - \Sig\|$, which is of order $p/\sqrt{T}$ (see Example~\ref{exam31}).
\end{enumerate}
\end{remark}
\subsection{Convergence of unknown factors and factor loadings}

Many applications of the factor model require estimating  the unknown factors. In general,
factor loadings in ${\bB}$ and the common factors ${\bff_t}$ are not separably identifiable, as for any  matrix $\bH$ such that $\bH'\bH=\bI_K$, $\bB\bff_t={\bB}\bH'\bH{\bff_t}$. Hence
$({\bB}, {\bff_t})$ cannot be identified from  $({\bB}\bH', \bH{\bff_t})$.  
Note that the linear space spanned by the rows of $\bB$ is the same as that by those of ${\bB}\bH'$.  In practice, it often does not matter which one is used.

Let $\bV$ denote the $\widehat{K}\times \widehat{K}$ diagonal matrix of the first $\widehat{K}$ largest eigenvalues of the sample covariance matrix in decreasing order. Recall that $\bF'=(\bff_1,...,\bff_T)$ and define a  $\widehat{K}\times \widehat{K}$ matrix
$
\bH=\frac{1}{T}\bV^{-1}\hF'\bF {\bB}'{\bB}.
$
Then for  $t\leq T$,
$
\bH\bff_t=T^{-1}\bV^{-1}\hF'(\bB\bff_1,...,\bB\bff_T)'\bB\bff_t.
$
Note that $\bH\bff_t$ depends only on the data $\bV^{-1}\hF'$ and an identifiable part of parameters $\{\bB\bff_t\}_{t=1}^T$.
 Therefore, there is no identifiability issue in $\bH \bff_t$ regardless of the imposed  identifiability condition.

Bai (2003) obtained the rate of convergence for both $\hb_i$ and $\hf_t$ for any fixed $(i,t)$. However,   the uniform rate of convergence is more relevant for many applications (see Example 5.1).   The following theorem extends those results in Bai (2003) in a uniformity sense. In particular, with a more refined technique, we have improved the uniform convergence rate for $\hf_t$.

\begin{thm}\label{thm33}   Under the assumptions of Theorem \ref{thm31},
  $$\max_{i\leq p}\|\hb_i-\bH\bb_i\|=O_p\left(\omega_T\right), \quad\max_{t\leq T}\|\hf_t-\bH\bff_t\|=O_p\left(\frac{1}{T^{1/2}}+\frac{T^{1/4}}{\sqrt{p}}\right).$$
\end{thm}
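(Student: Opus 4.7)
The plan proceeds via two decompositions, one for $\hf_t-\bH\bff_t$ and one for $\hb_i-\bH\bb_i$. Throughout, I invoke three preliminaries: (a) by Proposition~\ref{prop21}, Assumption~\ref{a35}, and the fact $P(\widehat{K}=K)\to1$ (which follows along Bai--Ng lines), the top $\widehat{K}$ eigenvalues of $\hSig_{\sam}$ grow at rate $p$, so $\|\bV^{-1}\|=O_p(p^{-1})$; (b) the exponential tails in Assumption~\ref{a21}(iii) combined with union bounds yield $\max_t\|\bff_t\|=O_p(\sqrt{\log T})$ and $\max_t\|\bB'\bu_t\|=O_p(\sqrt{p\log T})$; (c) the $L^2$-average bound $T^{-1}\sum_s\|\hf_s-\bH\bff_s\|^2=O_p(T^{-1}+p^{-1})$, proved by summing the expansion below and applying Cauchy--Schwarz as in Bai (2003, Proposition~1).

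For the factors, I start from the eigenvector identity $\hf_t=(T\bV)^{-1}\hF'\bY'\by_t$; substituting $\by_t=\bB\bff_t+\bu_t$ and $\bY'=\bF\bB'+\bU$, and subtracting $\bH\bff_t=(T\bV)^{-1}\hF'\bF\bB'\bB\bff_t$, produces
\begin{equation*}
\hf_t-\bH\bff_t=\bV^{-1}\Bigl[\frac{1}{T}\hF'\bF\bB'\bu_t+\frac{1}{T}\hF'\bU\bB\bff_t+\frac{1}{T}\hF'\bU\bu_t\Bigr].
\end{equation*}
Each of the three bracketed terms is further split by writing $\hf_s=\bH\bff_s+(\hf_s-\bH\bff_s)$. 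The first term, using $\|T^{-1}\hF'\bF\|=O_p(1)$, is bounded uniformly by $O_p(\sqrt{\log T/p})$ via (b). The second, recast as $(T^{-1}\hF'\bU\bB/p)\bff_t$, is controlled by bounding $\|T^{-1}\hF'\bU\bB/p\|$ via (c) and Cauchy--Schwarz, giving $O_p(\sqrt{\log T/(Tp)})$. For the third, I split $\bu_s'\bu_t/p=\gamma_{st}+\zeta_{st}$ with $\gamma_{st}=E\bu_s'\bu_t/p$ and $\zeta_{st}$ the centered part. The $\gamma_{st}$-piece produces $O_p(T^{-1/2})$ via Cauchy--Schwarz using summability of $\sum_h\gamma_h^2$ (implied by the strong mixing in Assumption~\ref{a32}), while the mean-zero off-diagonal $\bH T^{-1}\sum_{s\ne t}\bff_s\zeta_{st}$ is $O_p(\sqrt{\log T/(Tp)})$ by Bernstein for mixing conditional on $\bu_t$.

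The main obstacle, and what improves on Bai's uniform rate, is the cross term $T^{-1}\sum_s(\hf_s-\bH\bff_s)\zeta_{st}$; a naive $\max_t\|\hf_t-\bH\bff_t\|\le\sqrt{T}(T^{-1}\sum_s\|\hf_s-\bH\bff_s\|^2)^{1/2}$ gives only $O_p(1+\sqrt{T/p})$, which is useless. Instead I apply Cauchy--Schwarz inside the $s$-sum,
\begin{equation*}
\Bigl\|\frac{1}{T}\sum_s(\hf_s-\bH\bff_s)\zeta_{st}\Bigr\|\le\Bigl(\frac{1}{T}\sum_s\|\hf_s-\bH\bff_s\|^2\Bigr)^{1/2}\Bigl(\frac{1}{T}\sum_s\zeta_{st}^2\Bigr)^{1/2},
\end{equation*}
and control the second factor uniformly in $t$: Assumption~\ref{a33}(ii) yields $E\zeta_{st}^4=O(p^{-2})$, whence $E(T^{-1}\sum_s\zeta_{st}^2)^2=O(p^{-2})$, and a Markov union bound over $t$ gives $\max_t T^{-1}\sum_s\zeta_{st}^2=O_p(\sqrt{T}/p)$. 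Combined with (c), this yields a cross-term bound of $O_p((T^{-1}+p^{-1})^{1/2}(\sqrt{T}/p)^{1/2})\le O_p(T^{1/4}/\sqrt{p})$. Gathering all contributions gives $\max_t\|\hf_t-\bH\bff_t\|=O_p(T^{-1/2}+T^{1/4}/\sqrt{p})$.

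For the loadings, $\hb_i=T^{-1}\sum_t\hf_ty_{it}$ and inserting $\hf_t=\bH\bff_t+(\hf_t-\bH\bff_t)$ together with $y_{it}=\bb_i'\bff_t+u_{it}$ gives
\begin{equation*}
\hb_i-\bH\bb_i=\bH\Bigl(\frac{1}{T}\sum_t\bff_t\bff_t'-\bI_K\Bigr)\bb_i+\frac{1}{T}\sum_t(\hf_t-\bH\bff_t)\bff_t'\bb_i+\bH\frac{1}{T}\sum_t\bff_tu_{it}+\frac{1}{T}\sum_t(\hf_t-\bH\bff_t)u_{it}.
\end{equation*}
The first term is $O_p(T^{-1/2})$ by the mixing CLT and boundedness of $\|\bb_i\|$ (Assumption~\ref{a33}(i)); the third is $O_p(\sqrt{\log p/T})$ uniformly in $i$ by Bernstein for mixing with a union bound over $i$ (using $Ef_{jt}u_{it}=0$ and the exponential tails); the second and fourth are each bounded uniformly in $i$ by Cauchy--Schwarz against (c), yielding $O_p(T^{-1/2}+p^{-1/2})$ after using $T^{-1}\sum_t u_{it}^2=O_p(1)$ uniformly in $i$ and $T^{-1}\sum_t\|\bff_t\|^2=O_p(1)$. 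Summing the four contributions produces $\max_i\|\hb_i-\bH\bb_i\|=O_p(\sqrt{\log p/T}+p^{-1/2})=O_p(\omega_T)$, as claimed.
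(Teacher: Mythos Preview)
Your proof is essentially correct and follows the same route as the paper: the factor expansion you write is algebraically the paper's identity $(\hf_t-\bH\bff_t)=(\bV/p)^{-1}[T^{-1}\sum_s\hf_s(\gamma_{st}+\zeta_{st}+\eta_{st}+\xi_{st})]$, and your pivotal step---controlling $\max_t T^{-1}\sum_s\zeta_{st}^2=O_p(\sqrt{T}/p)$ via the fourth-moment bound in Assumption~\ref{a33}(ii) plus a union bound---is exactly the device the paper uses to obtain the $T^{1/4}/\sqrt{p}$ rate.

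Two minor points. First, your splitting $\hf_s=\bH\bff_s+(\hf_s-\bH\bff_s)$ in the $\zeta$, $\eta$, $\xi$ terms is unnecessary: the normalization $T^{-1}\sum_s\hf_s\hf_s'=\bI_K$ gives $T^{-1}\sum_s\|\hf_s\|^2=K$ directly, so Cauchy--Schwarz against $\hf_s$ itself (as the paper does) avoids the extra bookkeeping and the appeal to Bernstein for mixing. Second, your preliminary (b) that $\max_t\|\bB'\bu_t\|=O_p(\sqrt{p\log T})$ is not justified by the stated assumptions: the exponential tails are on individual $u_{it}$, not on the cross-sectional sum $p^{-1/2}\bB'\bu_t$, and there is no cross-sectional independence. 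What \emph{is} available is Assumption~\ref{a33}(iii), which gives a fourth-moment bound and hence $\max_t\|\bB'\bu_t\|=O_p(T^{1/4}\sqrt{p})$ via Markov plus a union bound; this weaker rate still delivers $O_p(T^{1/4}/\sqrt{p})$ for the $\eta$-term after $\bV^{-1}$, so your final conclusion is unaffected. (Likewise $\max_t\|\bff_t\|=O_p((\log T)^{1/r_2})$, not $O_p(\sqrt{\log T})$, but again this does not change the dominating rate.) For the loadings, your four-term decomposition is the paper's three-term one with $y_{it}$ expanded; the bounds are identical.
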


As a consequence of Theorem~\ref{thm33}, we obtain the following: (recall that the constant $r_2$ is defined in Assumption \ref{a21}.)
\begin{cor} \label{c31}
 Under the assumptions of Theorem \ref{thm31},
$$
\max_{i\leq p, t\leq T }\|\hb_i' \hf_t -\bb_i' \bff_t \|=O_p\left((\log T)^{1/r_2}\sqrt{\frac{\log p}{T}}+\frac{T^{1/4}}{\sqrt{p}}\right).
$$
\end{cor}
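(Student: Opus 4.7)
The plan is to decompose
\begin{align*}
\hb_i'\hf_t - \bb_i'\bff_t &= (\hb_i-\bH\bb_i)'(\hf_t-\bH\bff_t) + (\hb_i-\bH\bb_i)'\bH\bff_t \\
&\quad + (\bH\bb_i)'(\hf_t-\bH\bff_t) + \bb_i'(\bH'\bH-\bI_K)\bff_t
\end{align*}
and control each summand uniformly in $(i,t)$. Two preliminary ingredients feed the argument. First, the sub-exponential tail in Assumption \ref{a21}(iii), combined with a union bound over $t\leq T$ and the fact that $K$ is fixed, yields $\max_{t\leq T}\|\bff_t\|=O_p((\log T)^{1/r_2})$. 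Second, from the explicit form $\bH=T^{-1}\bV^{-1}\hF'\bF\bB'\bB$, Assumption \ref{a35} (which places both $\bV$ and $\bB'\bB$ at order $p$) together with $T^{-1}\hF'\bF=O_p(1)$ gives $\|\bH\|=O_p(1)$, and a refinement of the argument inside the proof of Theorem \ref{thm33} yields the approximate orthogonality $\|\bH'\bH-\bI_K\|=O_p(\omega_T)$.

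With these in hand, the four pieces are bounded as follows. By Cauchy--Schwarz and Theorem \ref{thm33}, the first term is $O_p(\omega_T(T^{-1/2}+T^{1/4}/\sqrt{p}))$, of strictly smaller order than the target. The second, bounded by $\max_i\|\hb_i-\bH\bb_i\|\cdot\|\bH\|\cdot\max_t\|\bff_t\|$, is $O_p(\omega_T(\log T)^{1/r_2})=O_p((\log T)^{1/r_2}\sqrt{\log p/T}+(\log T)^{1/r_2}/\sqrt{p})$. The third, using $\max_i\|\bb_i\|=O(1)$ from Assumption \ref{a33}(i), equals $\|\bH\|\cdot\max_i\|\bb_i\|\cdot\max_t\|\hf_t-\bH\bff_t\|=O_p(T^{-1/2}+T^{1/4}/\sqrt{p})$. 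The fourth is at most $\|\bH'\bH-\bI_K\|\cdot\max_i\|\bb_i\|\cdot\max_t\|\bff_t\|=O_p(\omega_T(\log T)^{1/r_2})$, again dominated. Collecting these and absorbing $T^{-1/2}$ into $\sqrt{\log p/T}$ and $(\log T)^{1/r_2}/\sqrt{p}$ into $T^{1/4}/\sqrt{p}$ (polylog versus polynomial growth in $T$) delivers the claimed rate.

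The step I expect to be the main obstacle is pinning down $\|\bH'\bH-\bI_K\|=O_p(\omega_T)$ cleanly. While $\|\bH\|=O_p(1)$ is immediate from coarse bounds, controlling the symmetric deviation $\bH'\bH-\bI_K$ requires expressing it through $\hF'\bF$ and $\bB'\bB$, invoking the sample-to-population link for the principal components supplied by Proposition \ref{prop22}, and tracking error terms in matrix (not entrywise) form. Once this is in place, the remainder of the corollary is a direct consequence of Theorem \ref{thm33} together with the uniform tail bound on $\|\bff_t\|$.
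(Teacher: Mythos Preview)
Your decomposition and the four term-by-term bounds are exactly the paper's proof of Corollary~\ref{c31}; the argument is correct and the final collection of rates is right. The one place you flagged as an obstacle, $\|\bH'\bH-\bI_K\|=O_p(1/\sqrt{T}+1/\sqrt{p})$, is established in the paper as a separate lemma (Lemma~\ref{lb5}), but not via Proposition~\ref{prop22} as you suggest: instead the paper uses the normalization $T^{-1}\sum_t\hf_t\hf_t'=\bI_K$ together with $T^{-1}\sum_t\|\hf_t-\bH\bff_t\|^2=O_p(1/T+1/p)$ to show $\bH\bH'\approx\bI_K$, and then transfers this to $\bH'\bH$ by multiplying on both sides by $\bH^{-1}$ and $\bH$. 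This is considerably more direct than routing through the population eigenvector perturbation bounds, so you can drop that worry.
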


The rates of convergence obtained above also explain the condition $T=o(p^2)$ in Theorems \ref{thm31} and \ref{thm32}.
It is needed in order to estimate the common factors $\{\bff_t\}_{t=1}^T$ uniformly in $t\leq T$. When we  do not observe $\{\bff_t\}_{t=1}^T$,  in addition to the factor loadings, there are $KT$ factors to estimate. Intuitively, the condition $T=o(p^2)$ requires the number of parameters introduced by the unknown factors  be ``not too many", so that we can consistently estimate them uniformly. Technically, as demonstrated by Bickel and Levina (2008), Cai and Liu (2011) and many other authors, achieving   uniform accuracy is essential for large covariance estimations.

\section{Choice of Threshold}
\subsection{Finite-sample positive definiteness}
 Recall that the threshold value $\tau_{ij}=C\sqrt{\hat{\theta}_{ij}}\omega_T$, where $C$ is determined by the users. To make POET operational in practice, one has to choose $C$ to maintain the positive definiteness of the estimated covariances  for any given finite sample. We   write $\hSig_{u,\widehat{K}}^{\mathcal{T}}(C)=\hSig_{u,\widehat{K}}^{\mathcal{T}}$, where the covariance estimator depends on $C$ via the threshold.   We choose $C$ in the range where $\lambda_{\min}(\hSig_{u,\widehat{K}}^{\mathcal{T}}) > 0$.
Define
\begin{equation}
C_{\min}=\inf\{C>0: \lambda_{\min}(\hSig_{u,\widehat{K}}^{\mathcal{T}}(M))>0,\quad \forall M>C\}
\end{equation}
When $C$ is sufficiently large, the estimator becomes diagonal, while  its minimum eigenvalue must retain strictly positive. Thus, $C_{\min}$ is well defined and for all $C>C_{\min}$, $\hSig_{u,\widehat{K}}^{\mathcal{T}}(C)$ is positive definite under finite sample. We can obtain  $C_{\min}$  by solving $
\lambda_{\min}(\hSig_{u,\widehat{K}}^{\mathcal{T}}(C))=0, C\neq0.
$
We can also approximate $C_{\min}$ by plotting $\lambda_{\min}(\hSig_{u,\widehat{K}}^{\mathcal{T}}(C))$ as a function of $C$, as illustrated in Figure \ref{mineig}.  In practice, we can choose $C$ in the range $(C_{\min}+\epsilon, M)$ for a small $\epsilon$ and large enough $M.$ Choosing the threshold in a range to guarantee the finite-sample positive definiteness has also been previously suggested by Fryzlewicz (2012).

\begin{figure}[htbp]
\begin{center}
\caption{Minimum eigenvalue of $\hSig_{u,\widehat{K}}^{\mathcal{T}}(C)$ as  a function of $C$ for three choices of thresholding rules.  The plot is based on the simulated data set in Section 6.2.}
\includegraphics[width=8cm]{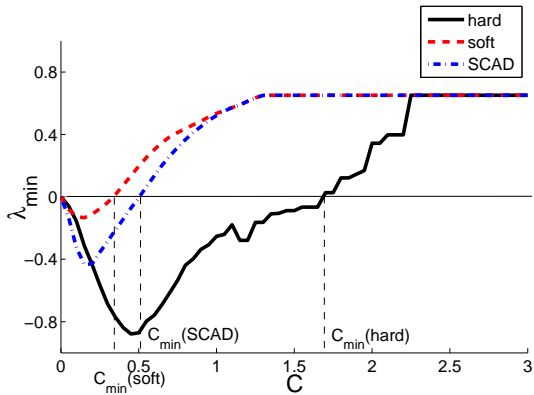}
\label{mineig}
\end{center}

\end{figure}

\subsection{Multifold Cross-Validation}
 In practice, $C$ can be data-driven, and chosen through multifold cross-validation. After obtaining the estimated residuals $\{\hu_t\}_{t\leq T}$  by the PCA, we   divide them randomly into two subsets, which are, for simplicity, denoted by $\{\hu_t\}_{t\in J_1}$ and $\{\hu_t\}_{t\in J_2}$. The sizes of $J_1$ and $J_2$, denoted by $T(J_1)$ and $T(J_2)$,  are $T(J_1)\asymp T$ and $T(J_2)+T(J_1)=T.$ For example, in sparse matrix estimation, Bickel and Levina (2008) suggested to choose $T(J_1)=T(1-(\log T)^{-1})$.

We repeat this procedure $H$ times.  At the $j$th split,  we denote by $\hSig_u^{\mathcal{T}, j}(C)$ the POET estimator with the threshold $C\sqrt{\theta_{ij}}\omega_T$ on the training data set $\{\hu_t\}_{t\in J_1}.$ We also denote by $\hSig_u^j$ the sample covariance based on the validation set, defined by
$
 \hSig_u^j=T(J_2)^{-1}\sum_{t\in J_2}\hu_t\hu_t'.$    Then we choose the constant $C^*$ by minimizing a cross-validation objective function over a compact interval
  \begin{equation} \label{eq4.2}
C^*=\arg\min_{C_{\min}+\epsilon\leq C\leq M} \frac{1}{H}\sum_{j=1}^H\|\hSig_u^{\mathcal{T}, j}(C)-\hSig_u^j\|_F^2.
\end{equation}
Here $C_{\min}$ is the minimum constant that guarantees the positive definiteness of $\hSig_{u,\widehat{K}}^{\mathcal{T}}(C)$ for $C>C_{\min}$  as described in the previous subsection, and $M$ is a large constant such that $\hSig_{u,\widehat{K}}^{\mathcal{T}}(M)$ is diagonal. The resulting $C^*$ is data-driven, so  depends  on $\bY$ as well as $p$ and $T$ via the data. On the other hand, for each given $N\times T$ data matrix $\bY$, $C^*$ is a universal constant in the threshold $\tau_{ij}=C^*\sqrt{\hat{\theta}_{ij}}\omega_T$ in the sense that it does not change with respect to the position $(i,j)$. We also note that the cross-validation is  based on the estimate of $\Sig_u$ rather than $\Sig$ because POET thresholds the error covariance matrix. Thus cross-validation improves the performance of thresholding. 

It is possible to derive the rate of convergence for $\hSig_{u,\widehat{K}}^{\mathcal{T}}(C^*)$ under the current model setting, but it ought to be  much more technically involved than the regular sparse matrix estimation considered by Bickel and Levina (2008) and Cai and Liu (2011). To keep our presentation simple we do not pursue it in the current paper.

\section{Applications of POET}

We give four examples to which  the results in Theorems \ref{thm31}--\ref{thm33} can be applied.  Detailed pursuits of these are beyond the scope of the paper.

\begin{exm}
[Large-scale hypothesis testing]
Controlling the false discovery rate in large-scale hypothesis testing based on correlated test statistics is an important and challenging problem in statistics
(Leek and Storey, 2008; Efron, 2010; Fan, et al., 2012).  Suppose that the test statistic for each of the hypothesis
$$
    H_{i0}: \mu_i = 0 \quad \mbox{vs.} \quad H_{i1}: \mu_i \not = 0
$$
is $Z_i \sim N(\mu_i, 1)$ and these test statistics $\bZ$ are jointly normal  $N(\bmu, \bSigma)$ where $\bSigma$ is unknown.  For a given critical value $x$, the false discovery proportion is then defined as  $\mbox{FDP}(x) = V(x) / R(x)$ where
$
    V(x) = p^{-1} \sum_{\mu_i = 0} I(|Z_i| > x)  $ and $R(x) = p^{-1} \sum_{i=1}^p I(|Z_i| > x)
$
are the total number of false discoveries and the total number of discoveries, respectively.  Our interest is to estimate $\mbox{FDP}(x)$ for each given $x$.  Note that $R(x)$ is an observable quantity.  Only $V(x)$ needs to be estimated.

If the covariance $\bSigma$ admits the approximate factor structure (\ref{eq1.3}),
then the test statistics can be stochastically decomposed as
\begin{equation} \label{eq4.1}
    \bZ = \bmu + \bB \bff+ \bu, \qquad \mbox{ where }\bSigma_u \mbox{ is  sparse}.
\end{equation}
By the principal factor approximation (Theorem 1, Fan, Han, Gu, 2012)
\begin{equation} \label{eq4.2}
   V(x) = \sum_{i = 1}^p \{\Phi(a_i (z_{x/2} + \eta_i)) + \Phi(a_i (z_{x/2} - \eta_i))\} + o_P(p),
\end{equation}
when $m_p = o(p)$ and the number of true significant hypothesis $\{i:  \mu_i \not = 0 \}$ is $o(p)$,  where $z_x$ is the upper $x$-quantile of the standard normal distribution, $\eta_i = (\bB \bff)_i$ and $a_i = \var(u_i)^{-1}$.

Now suppose that we have $n$ repeated measurements from the model (\ref{eq4.1}).  Then, by Corollary~\ref{c31}, $\{\eta_i\}$ can be uniformly consistently estimated, and hence $p^{-1} V(x)$ and $\mbox{FDP}(x)$ can be consistently estimated.  Efron (2010) obtained these repeated test statistics based on the bootstrap sample from the original raw data.  Our theory (Theorem~\ref{thm33}) gives a formal justification to the framework of Efron (2007, 2010).
 
\end{exm}

\begin{exm}[Risk management] The maximum elementwise estimation error $\|\hSig_{\widehat{K}}-\Sig\|_{\max}$ appears in risk assessment as in Fan, Zhang and Yu (2012). For a fixed portfolio  allocation vector $\bw$, the true portfolio variance and  the estimated one are given by $\bw'\Sig\bw$  and $\bw'\hSig_{\widehat{K}}  \bw$ respectively. The estimation error is bounded by
$$
|\bw'\hSig_{\widehat{K}}  \bw-\bw'\Sig\bw|\leq \|\hSig_{\widehat{K}}  -\Sig\|_{\max}\|\bw\|_1^2,
$$
where $\|\bw\|_1$, the $L_1$-norm of $\bw$, is the gross exposure of the portfolio. Usually a constraint is placed on the total percentage of the short positions, in which case we have a restriction $\|\bw\|_1\leq c$ for some $c>0.$  In particular, $c = 1$ corresponds to a portfolio with no-short positions (all weights are nonnegative). Theorem~\ref{thm32} quantifies the maximum approximation error.

The above compares the absolute error of perceived risk and true risk.  The relative error is bounded by
$$
|\bw'\hSig_{\widehat{K}}  \bw/\bw'\Sig\bw - 1| \leq \|\Sig^{-1/2}\hSig_{\widehat{K}} \Sig^{-1/2} - \bI_p\|
$$
for any allocation vector $\bw$.
Theorem~\ref{thm32} quantifies this relative error.
\end{exm}

\begin{exm}[Panel regression with a factor structure in the errors] Consider the following panel regression model
\begin{eqnarray*}
Y_{it}&=&\bx_{it}'\bbeta+\varepsilon_{it}, \qquad \varepsilon_{it}=\bb_i'\bff_t+u_{it}, \qquad  i\leq p, t\leq T,
\end{eqnarray*}
where $\bx_{it}$ is a vector of observable regressors with fixed dimension. The regression error $\varepsilon_{it}$ has a factor structure and is assumed to be independent of $\bx_{it}$, but $\bb_i$, $\bff_t$ and $u_{it}$ are all unobservable. We are interested in the common regression coefficients $\bbeta$. The above panel regression model has been considered by many researchers, such as Ahn, Lee and Schmidt (2001), Pesaran (2006),  and  has broad applications in social sciences.

Although OLS (ordinary least squares) produces a consistent estimator of $\bbeta$, a more efficient estimation can be obtained by GLS (generalized least squares). The GLS method depends, however, on an estimator of $\Sig_{\epsilon}^{-1}$, the inverse of the covariance matrix of $\beps_t=(\varepsilon_{1t},..., \varepsilon_{pt})'$.
By assuming the covariance matrix of $(u_{1t},...,u_{pt})$ to be sparse, we can successfully solve this problem by applying Theorem \ref{thm32}. Although $\varepsilon_{it}$ is unobservable, it can be replaced by the regression residuals $\hat{\varepsilon}_{it}$, obtained via first regressing $Y_{it}$ on $\bx_{it}$. We then apply the POET estimator to $T^{-1}\sum_{t=1}^T\heps_t\heps_t'$. By Theorem \ref{thm32}, the inverse of the resulting estimator is a consistent estimator of $\Sig_{\epsilon}^{-1}$ under the spectral norm. A slight difference  lies in the fact that when we apply POET, $T^{-1}\sum_{t=1}^T\beps_t\beps_t'$ is replaced with $T^{-1}\sum_{t=1}^T\heps_t\heps_t'$, which introduces an additional term $O_p(\sqrt{\frac{\log p}{T}})$ in the estimation error.
\end{exm}

\begin{exm}[Validating an asset pricing theory]
A celebrated financial economic theory is the capital asset pricing model (CAPM, Sharpe 1964) that makes William Sharpe  win the Nobel prize in Economics in 1990, whose extension is the multi-factor model (Ross, 1976, Chamberlain and Rothschild, 1983).  It states that in a frictionless market, the excessive return of any financial asset equals  the excessive returns of the risk factors times its factor loadings plus noises.  In the multi-period model, the excess return $y_{it}$ of firm $i$ at time $t$ follows model (\ref{eq1.1}), in which $\bff_t$ is the excess returns of the risk factors at time $t$.  To test the null hypothesis (\ref{eq1.2}), one embeds the model into the multivariate linear model
\begin{equation} \label{eq4.3}
   \by_t = \balpha + \bB \bff_t  + \bu_t, \qquad t = 1, \cdots, T
\end{equation}
and wishes to test $H_0: \balpha = 0$.  The F-test statistic involves the estimation of the covariance matrix $\Sig_u$, whose estimates are degenerate without regularization
when $p \geq T$.  Therefore, in the literature (Sentana, 2009, and references therein), one focuses on the case $p$ is relatively small.  The typical choices of parameters are $T = 60$ monthly data and the number of assets $p = 5$, 10 or 25.  However, the CAPM should hold for all tradeable assets, not just a small fraction of assets. With our regularization technique, non-degenerate estimate $\hSig_{u,\widehat{K}}^{\mathcal{T}}$ can be obtained and the F-test or likelihood-ratio test statistics can be employed even when $p \gg T$.

To provide some insights, let $\hat{\balpha}$ be the least-squares estimator of (\ref{eq4.3}).  Then, when $\bu_t \sim N(0, \Sig_u)$, $\hat{\balpha} \sim N(\balpha, \Sig_u/c_T)$ for a constant $c_T$ which depends on the observed factors.  When $\Sig_u$ is known, the Wald test statistic is $W = c_T \hat{\balpha}' \Sig_u^{-1} \hat{\balpha}$.  When it is unknown and $p$ is large, it is natural to use the F-type of test statistic $\hat W = c_T \hat{\balpha}' (\hSig_{u,\widehat{K}}^{\mathcal{T}})^{-1} \hat{\balpha}$.  The difference between these two statistics is bounded by
$$
| \hat{W} - W | \leq  c_T \| (\hSig_{u,\widehat{K}}^{\mathcal{T}})^{-1} -
\Sig_u^{-1} \|  \|\hat{\balpha}\|^2.
$$
Since under the null hypothesis $\hat{\balpha} \sim N(0, \Sig_u/c_T)$, we have $c_T \| \Sig_u^{-1/2} \hat{\balpha}\|^2 = O(p)$.  Thus, it follows from boundness of $\|\Sig_u\|$ that
$
   | \hat{W} - W | = O(p) \| (\hSig_{u,\widehat{K}}^{\mathcal{T}})^{-1} -
\Sig_u^{-1} \|.
$
Theorem 3.1 provides the rate of convergence for the above difference.
Detailed development is out of the scope of the current paper, and we will leave it as a separate research project.
\end{exm}

\section{Monte Carlo Experiments}
In this section, we will examine the performance of the POET method in a finite sample. We will also demonstrate the effect of this estimator on the asset allocation and risk assessment. Similarly to Fan, et al. (2008, 2011), we simulated from a standard Fama-French three-factor model, assuming a sparse error covariance matrix and three  factors. Throughout this section, the time span is fixed at $T=300$, and the dimensionality $p$ increases from $1$ to $600$. We assume that the excess returns of each of $p$ stocks over the risk-free interest rate follow the following model:
$$
y _{it}=b_{i1}f_{1t}+b_{i2}f_{2t}+b_{i3}f_{3t}+u_{it}.
$$
The factor loadings are drawn from a trivariate normal distribution $\bb\sim N_3(\bmu_B,\Sig_B)$,
the idiosyncratic errors from $\bu_t\sim N_p({\bf 0},\Sig_u)$, and the factor returns $\bff_t$ follow a VAR(1) model.  To make the simulation more realistic, model parameters are calibrated from the financial returns, as detailed in the following section.    

\subsection{Calibration}
To calibrate the model, we use the data on annualized returns of 100 industrial portfolios from the website of Kenneth French, and the data on 3-month Treasury bill rates from the CRSP database. These industrial portfolios are formed as the intersection of $10$ portfolios based on size (market equity) and $10$ portfolios based on  book equity to market equity ratio. Their excess returns $(\ty_t)$ are computed for the period from January $1^{st}$, 2009 to December $31^{st}$, 2010. Here, we present a short outline of the calibration procedure.
\begin{enumerate}
\item Given $\{\ty_t\}_{t=1}^{500}$ as the input data, we fit a Fama-French-three-factor model and calculate a $100\times 3$ matrix $\tB$, and $500 \times 3$ matrix $\tF$, using the principal components method described in Section 3.1.
\item
We summarize 100 factor loadings (the rows of $\tB$) by their sample mean vector $\bmu_B$ and sample covariance matrix $\Sig_B$, which are reported in Table 1. The factor loadings $\bb_i=(b_{i1}, b_{i2},b_{i3})^T$   for $i=1,...,p$ are drawn from $N_3(\bmu_B, \Sig_B)$.

\begin{table}[htdp]
\begin{center}
\caption{Mean and covariance matrix used to generate $\bb$}
\begin{tabular}{c|ccc}
\hline
 $\bmu_B$        &  & $\Sig_B$ &   \\
\hline
0.0047 &0.0767 & -0.00004& 0.0087  \\
0.0007 &-0.00004 & 0.0841&0.0013   \\
-1.8078&0.0087 & 0.0013 & 0.1649  \\
\hline
\end{tabular}
\end{center}
\end{table}

\item  We run the stationary vector autoregressive model $\bff_t=\bmu+\bPhi{\bff_{t-1}}+\beps_t$, a VAR(1) model, to the data
    $\tF$ to obtain the multivariate least squares estimator for $\bmu$ and $\bPhi$, and estimate $\Sig_{\epsilon}$. Note that all eigenvalues of $\bPhi$ in Table 2 fall within the unit circle, so our model is  stationary. The covariance matrix $\cov(\bff_t)$ can be obtained by solving the linear equation
$\cov(\bff_t)=\bPhi\cov(\bff_t)\bPhi'+\Sig_{\epsilon}.$
The estimated parameters are depicted in Table 2 and are used to generate $\bff_t$.
   \begin{table}[htdp]
   \label{tadd2}
   \begin{center}
       \caption{Parameters of $\bff_t$ generating process}
\begin{tabular}{c|ccc|ccc}
\hline
$\bmu$   & &$\cov(\bff_t)$   &  & &$\bPhi$& \\
\hline
-0.0050  &1.0037 &0.0011&-0.0009&-0.0712 &0.0468& 0.1413\\
0.0335 &0.0011  & 0.9999 &0.0042 &-0.0764  & -0.0008 &0.0646 \\
-0.0756 &-0.0009 &0.0042 &0.9973 &0.0195 &-0.0071 & -0.0544 \\
\hline
\end{tabular}
\end{center}

    \end{table}

\item
For each value of $p$, we generate a sparse covariance matrix  $\Sig_u$ of the form:
$$\Sig_u={\bD}\Sig_0{\bD}.$$
Here, $\Sig_0$ is the error correlation matrix, and $\bD$ is the diagonal matrix of the standard deviations of the errors. We set $\bD=\diag(\sigma_1,...,\sigma_p)$, where each $\sigma_i$ is generated independently from a Gamma distribution $G(\alpha, \beta)$, and $\alpha$ and $\beta$ are chosen to match the sample mean and sample standard deviation of the standard deviations of the errors. A similar approach to Fan et al. (2011) has been used in this calibration step. The off-diagonal entries of $\Sig_0$ are generated independently from a normal distribution, with mean and standard deviation equal to the sample mean and sample standard deviation of the sample correlations among the estimated residuals, conditional on their absolute values being no larger than $0.95$.  We then employ hard thresholding to make $\Sig_0$ sparse, where the threshold is found as the smallest constant that provides the positive definiteness of $\Sig_0$.  More precisely, start with threshold value 1, which gives $\Sig_0 = \bI_p$ and then decrease the threshold values in a grid until positive definiteness is violated.
\end{enumerate}

\subsection{Simulation}
For the simulation, we fix $T=300$, and let $p$ increase from $1$ to $600$. For each fixed $p$, we repeat the following steps $N=200$ times, and record the means and the standard deviations of each respective norm.  \begin{enumerate}
\item
Generate independently $\{\bb_i\}_{i=1}^{p}\sim N_3(\bmu_B,\Sig_B)$, and set $\bB=(\bb_1,...,\bb_p)'.$
\item
Generate independently $\{\bu_t\}_{t=1}^{T}\sim N_p(\bf0, \Sig_u)$.
\item
Generate $\{\bff_t\}_{t=1}^{T}$ as a vector autoregressive sequence of the form $\bff_t=\bmu+\Phi\bff_{t-1}+\beps_t$.
\item
Calculate $\{\by_t\}_{t=1}^T$ from $\by_t=\bB\bff_t+\bu_t$.
\item
 Set hard-thresholding with threshold $ 0.5\sqrt{\hat{\theta}_{ij}}(\sqrt{\frac{\log p}{T}}+\frac{1}{\sqrt{p}})$. Estimate $K$ using Bai and Ng (2002)'s IC1. Calculate covariance estimators using the POET method. Calculate the  sample covariance matrix  $\hSig_{sam}$.

\end{enumerate}

In the graphs below, we plot the averages and standard deviations of the distance from $\hSig_{\widehat{K}}  $ and $\hSig_{\sam}$ to the true covariance matrix $\Sig$, under norms $\|.\|_{\Sigma}$, $\|.\|$ and $\|.\|_{\max}$. We also plot the means and standard deviations of the distances from $(\hSig_{\widehat{K}}  )^{-1}$ and $\hSig_{\sam}^{-1}$ to $\Sig^{-1}$ under the spectral norm.  The dimensionality $p$ ranges from $20$ to $600$ in increments of $20$. Due to invertibility, the spectral norm for $\hSig_{\sam}^{-1}$ is plotted only up to $p=280$. Also, we zoom into these graphs by plotting the values of $p$ from $1$ to $100$, this time in increments of $1$. Notice that we also plot the distance from $\hSig_{obs}$ to $\Sig$ for comparison, where $\hSig_{obs}$ is the estimated covariance matrix proposed by Fan et al. (2011), assuming the  factors are observable.

\subsection{Results}

In a factor model, we expect POET to perform as well as $\hSig_{obs}$ when $p$ is relatively large, since the effect of estimating the unknown factors should vanish as $p$ increases. This  is illustrated in the plots below.

\begin{figure}[htbp]
\begin{center}
\caption{Averages (left panel) and standard deviations (right panel) of the relative error $p^{-1/2}\|\Sig^{-1/2}\hSig\Sig^{-1/2}-\bI_p\|_F$ with  known factors ($\hSig=\hSig_{obs}$ solid red curve),  POET ($\hSig=\hSig_{\widehat{K}}$ solid blue curve),  and sample covariance ($\hSig=\hSig_{\sam}$ dashed curve) over 200 simulations, as a function of the dimensionality $p$.
Top panel:  $p$ ranges in 20 to 600 with increment 20; bottom panel: $p$ ranges in 1 to 100 with increment 1.}
\includegraphics[width=6cm]{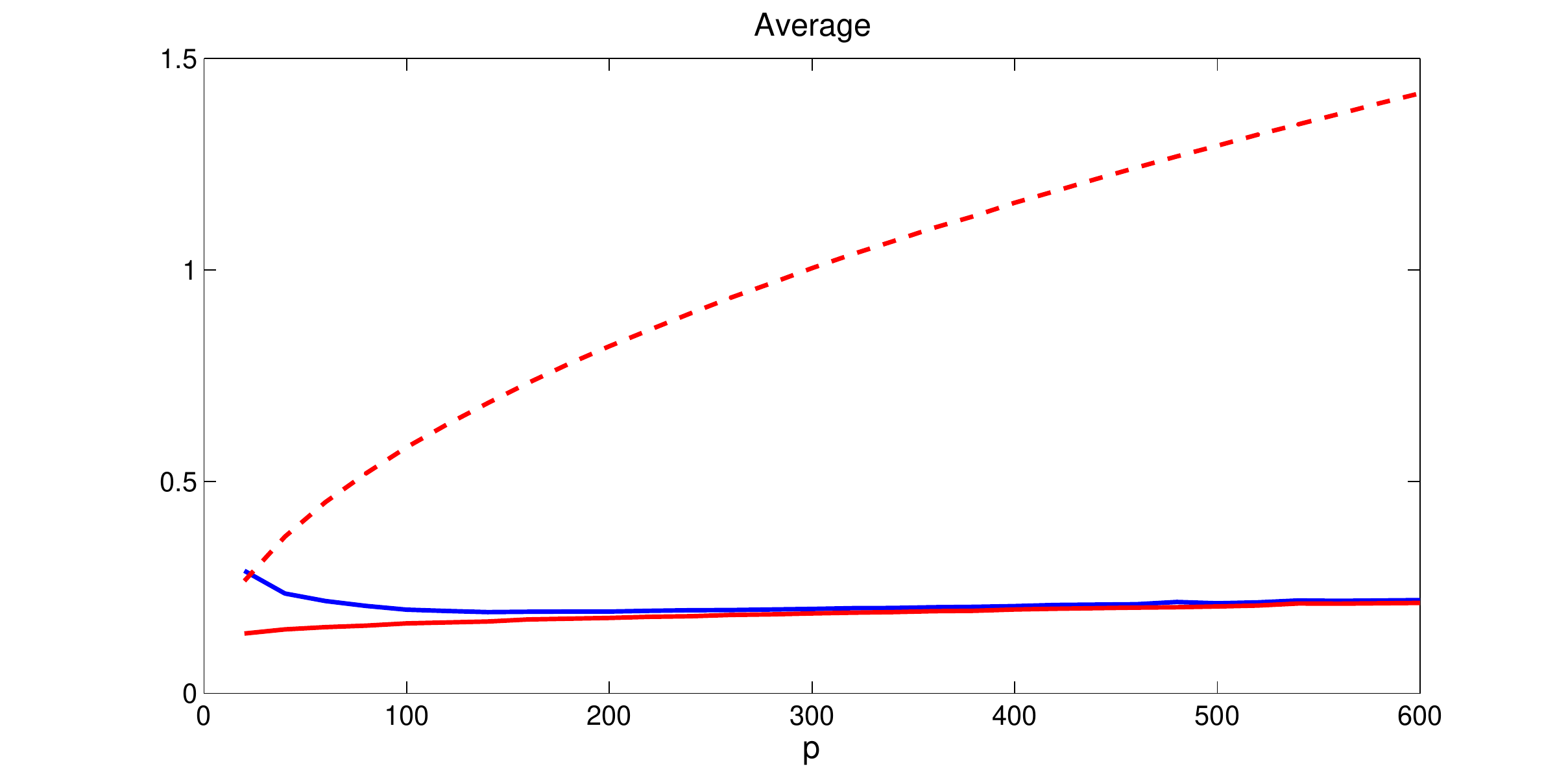}
\includegraphics[width=6cm]{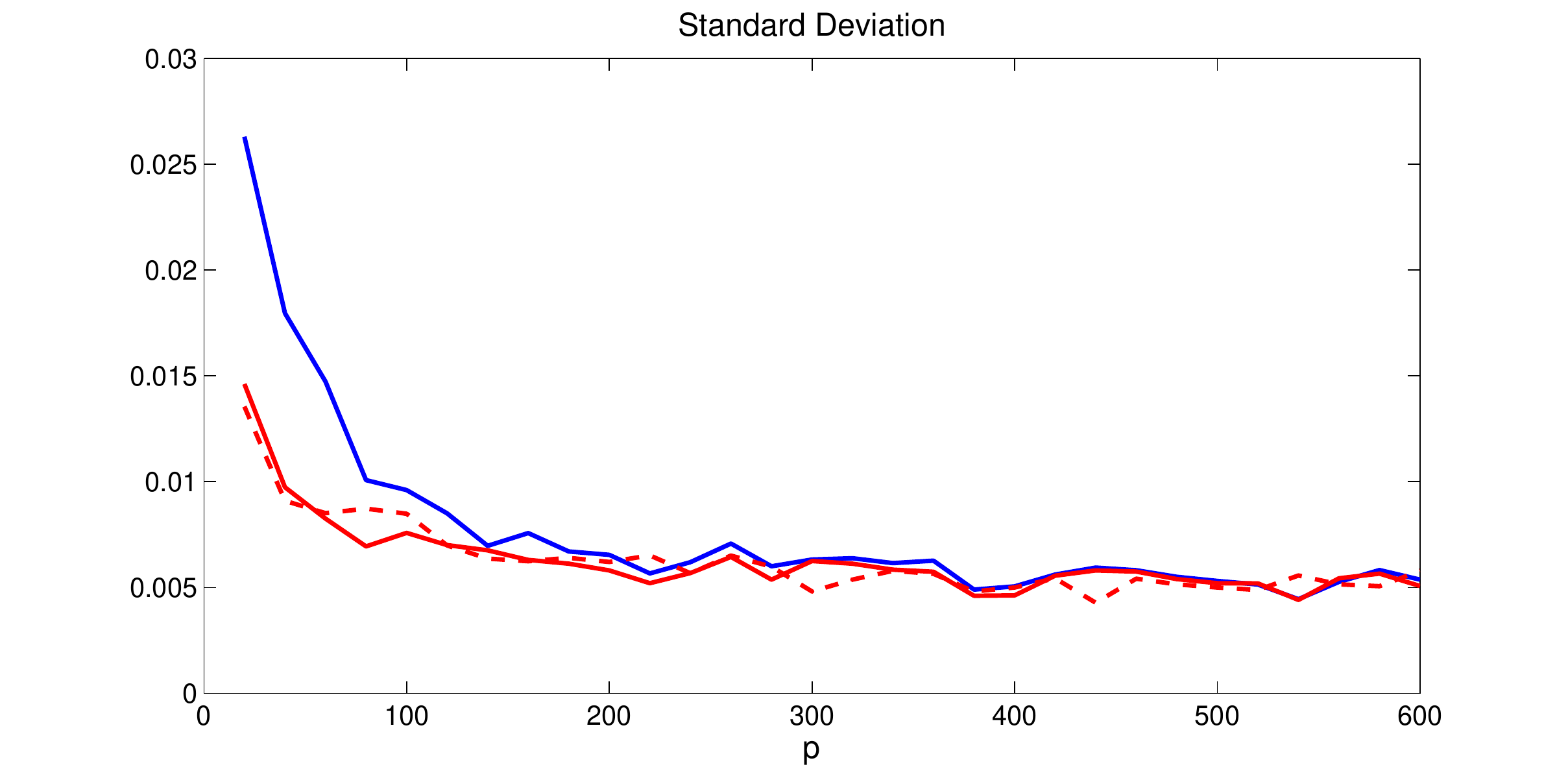}
\includegraphics[width=6cm]{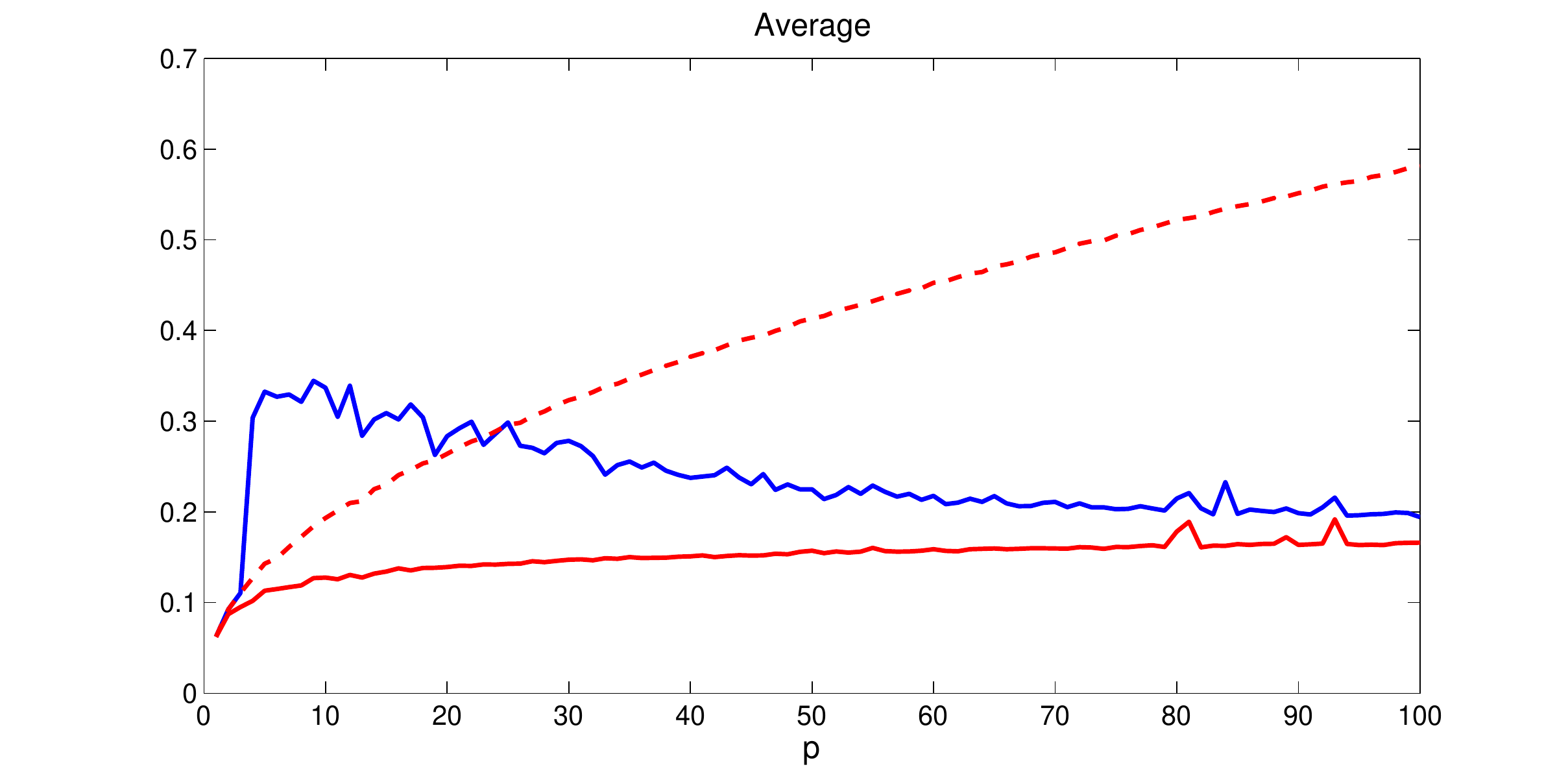}
\includegraphics[width=6cm]{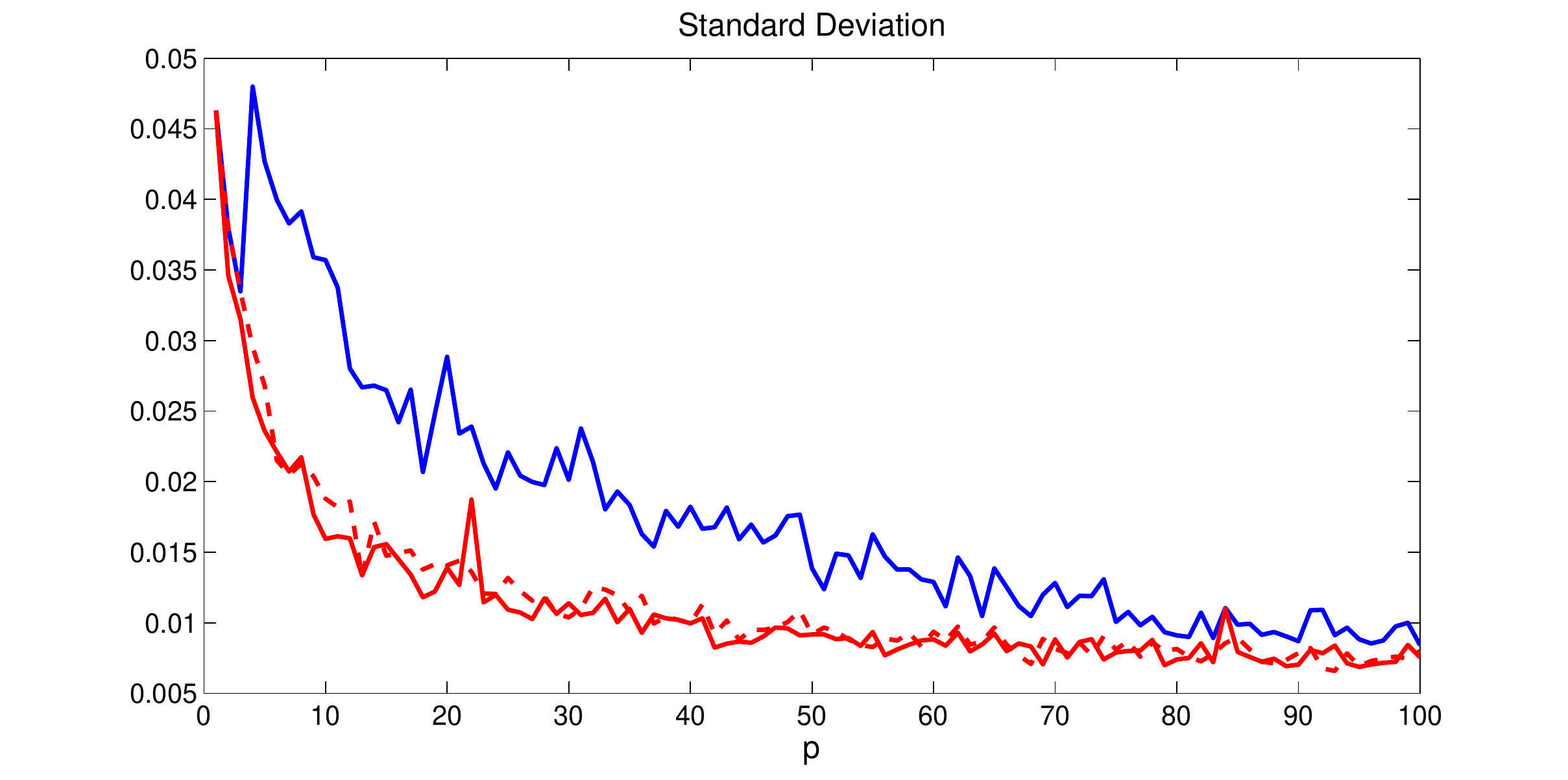}
\label{fig1}
\end{center}
\end{figure}

\begin{figure}[htbp]
\begin{center}
\caption{Averages (left panel) and standard deviations (right panel) of $\|\hSig^{-1}-\Sig^{-1}\|$ with known factors ($\hSig=\hSig_{obs}$ solid red curve),  POET ($\hSig=\hSig_{\widehat{K}}$ solid blue curve),  and sample covariance ($\hSig=\hSig_{\sam}$ dashed curve)  over 200 simulations, as a function of the dimensionality $p$.  Top panel:  $p$ ranges in 20 to 600 with increment 20; middle panel: $p$ ranges in 1 to 100 with increment 1; Bottom panel: the same as the top panel with dashed curve excluded.}
\includegraphics[width=6cm]{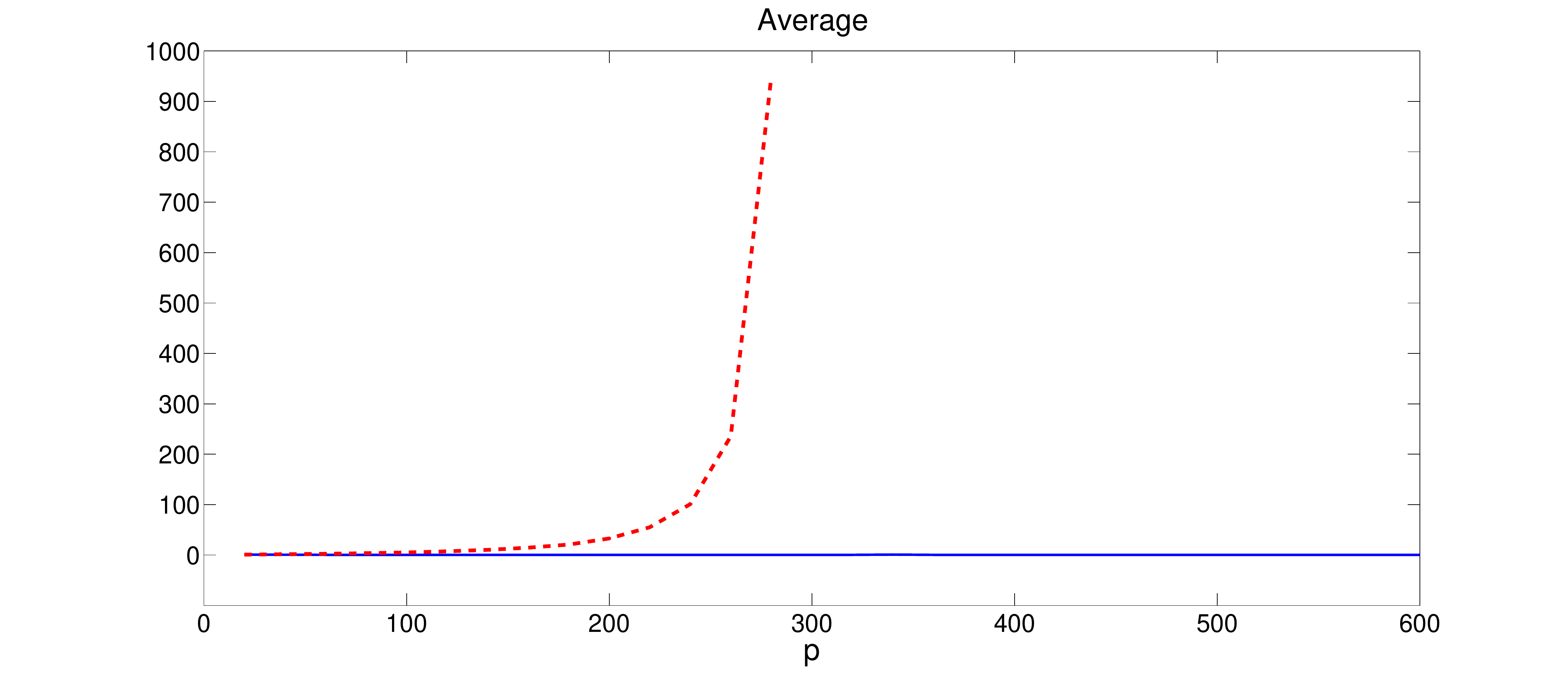}
\includegraphics[width=6cm]{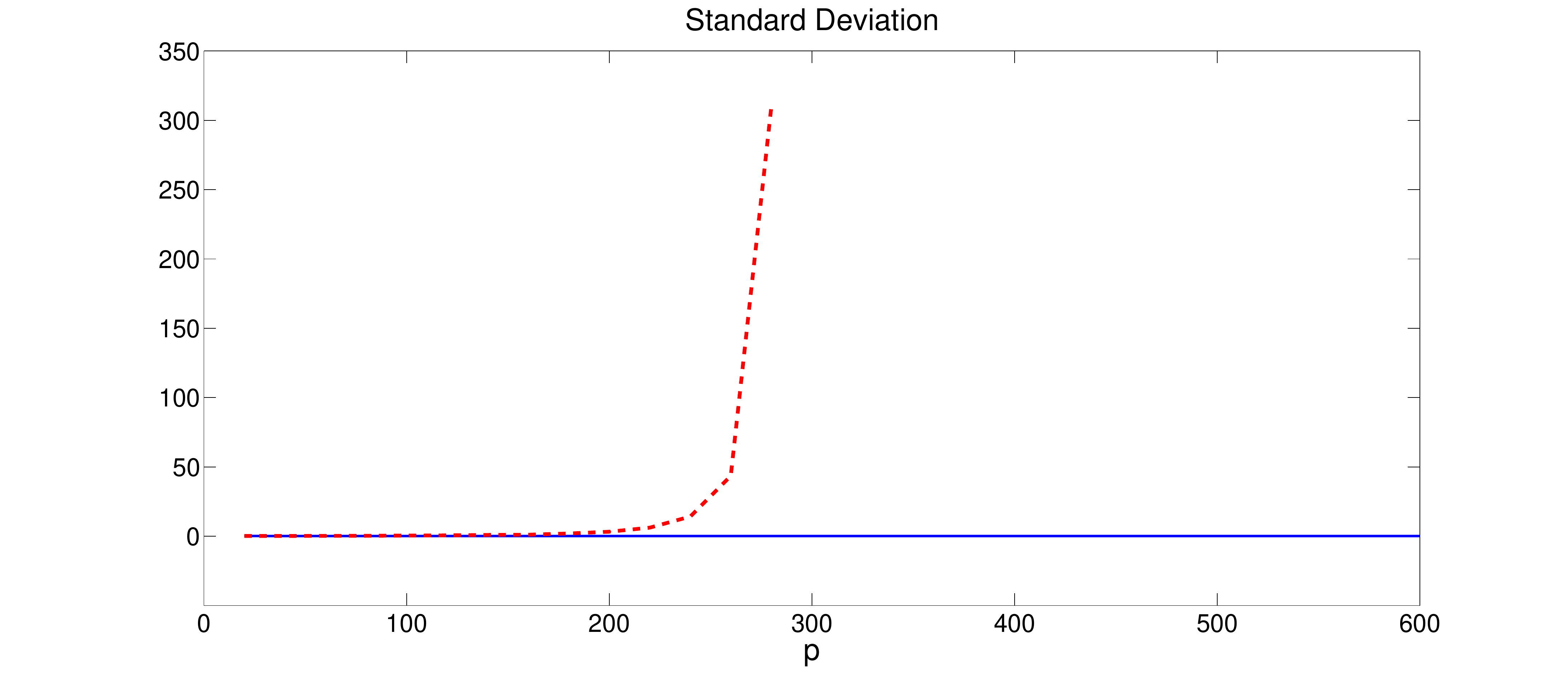}
\includegraphics[width=6cm]{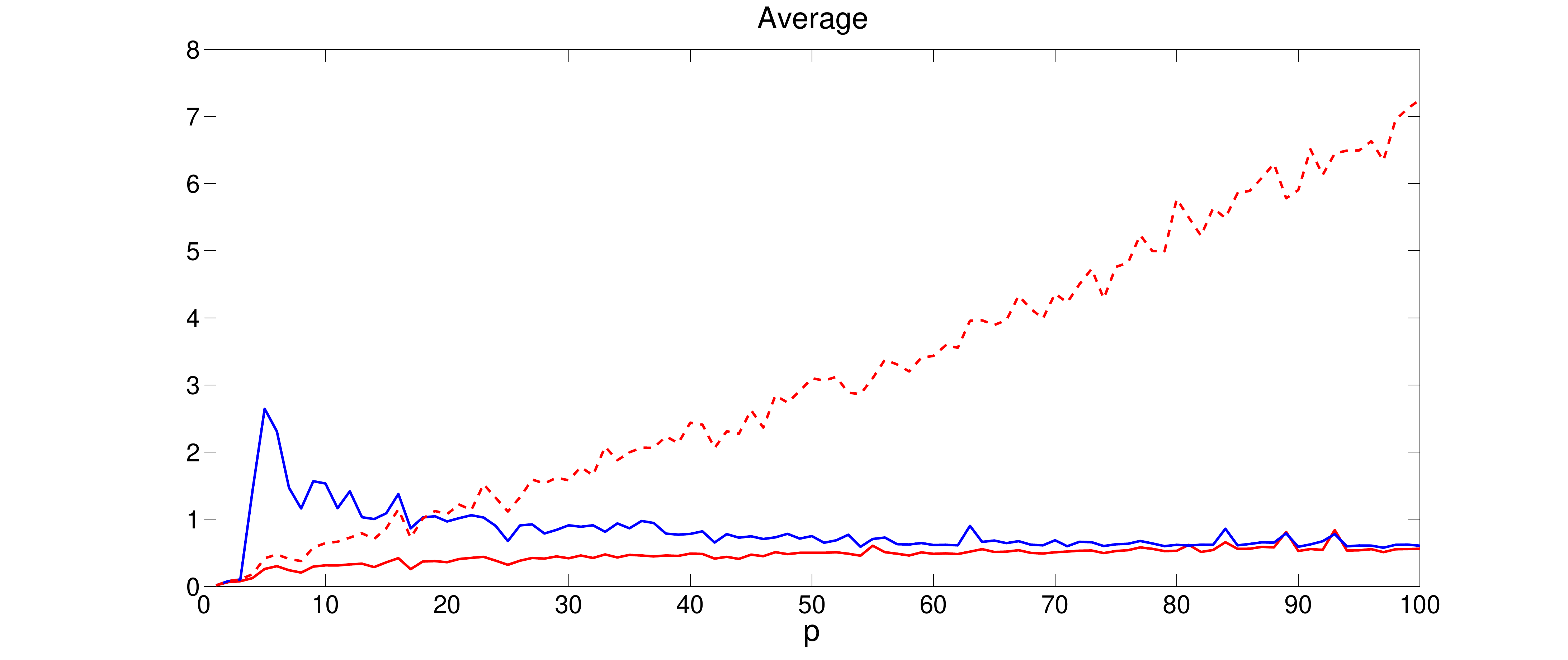}
\includegraphics[width=6cm]{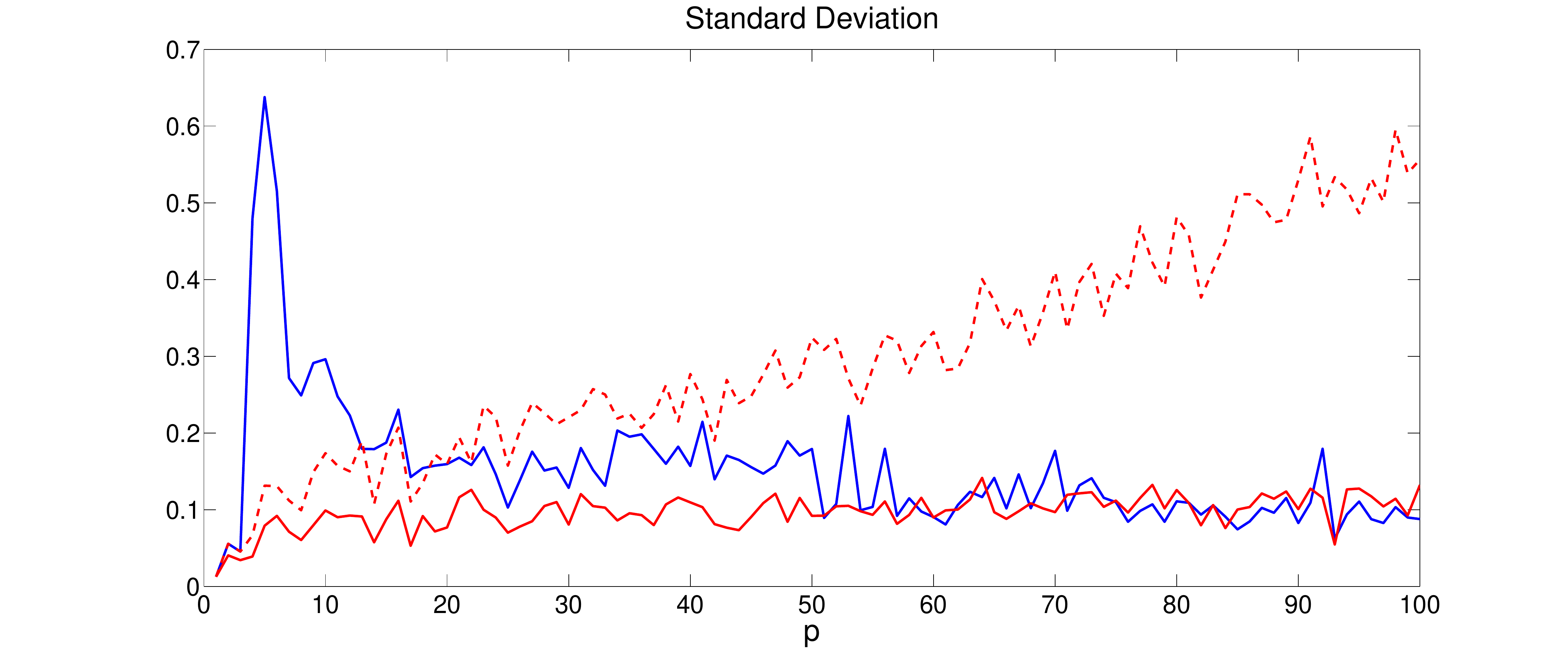}
\includegraphics[width=6cm]{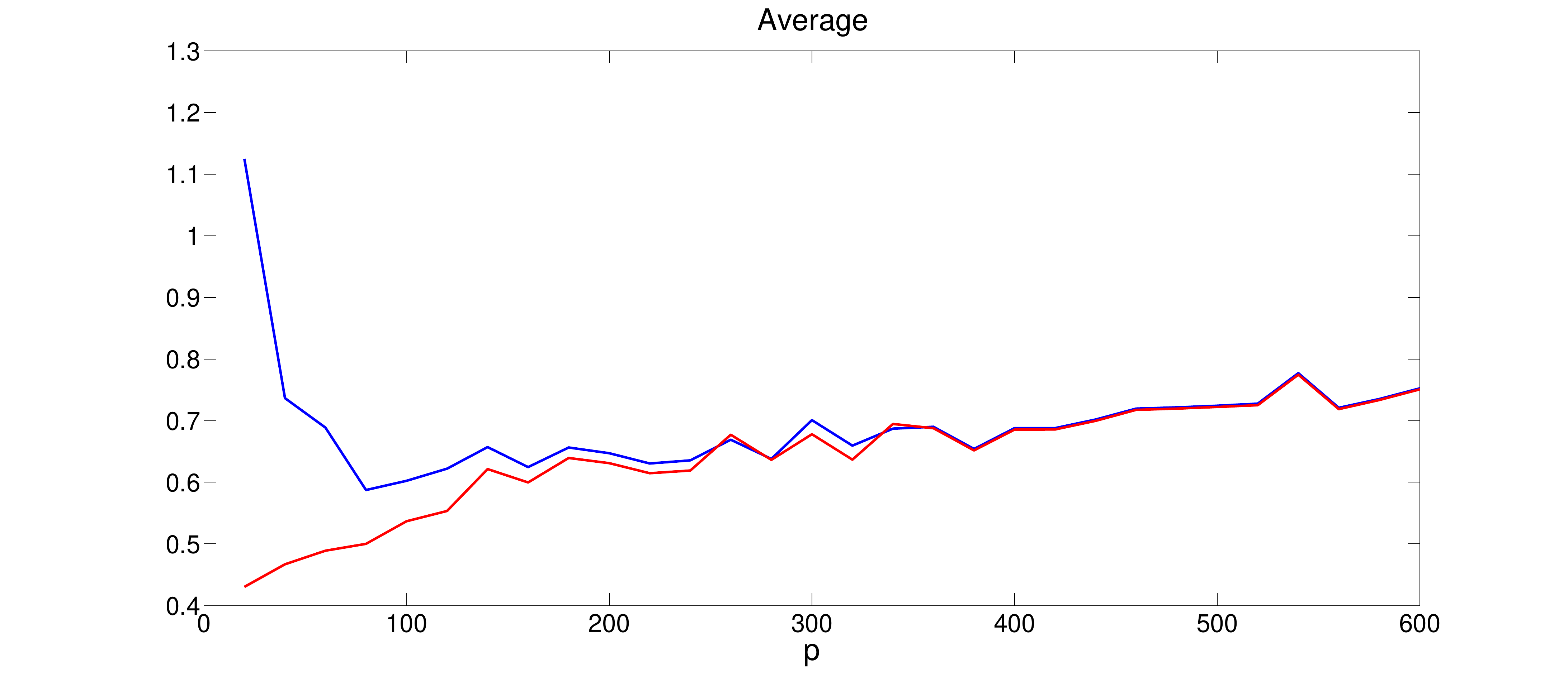}
\includegraphics[width=6cm]{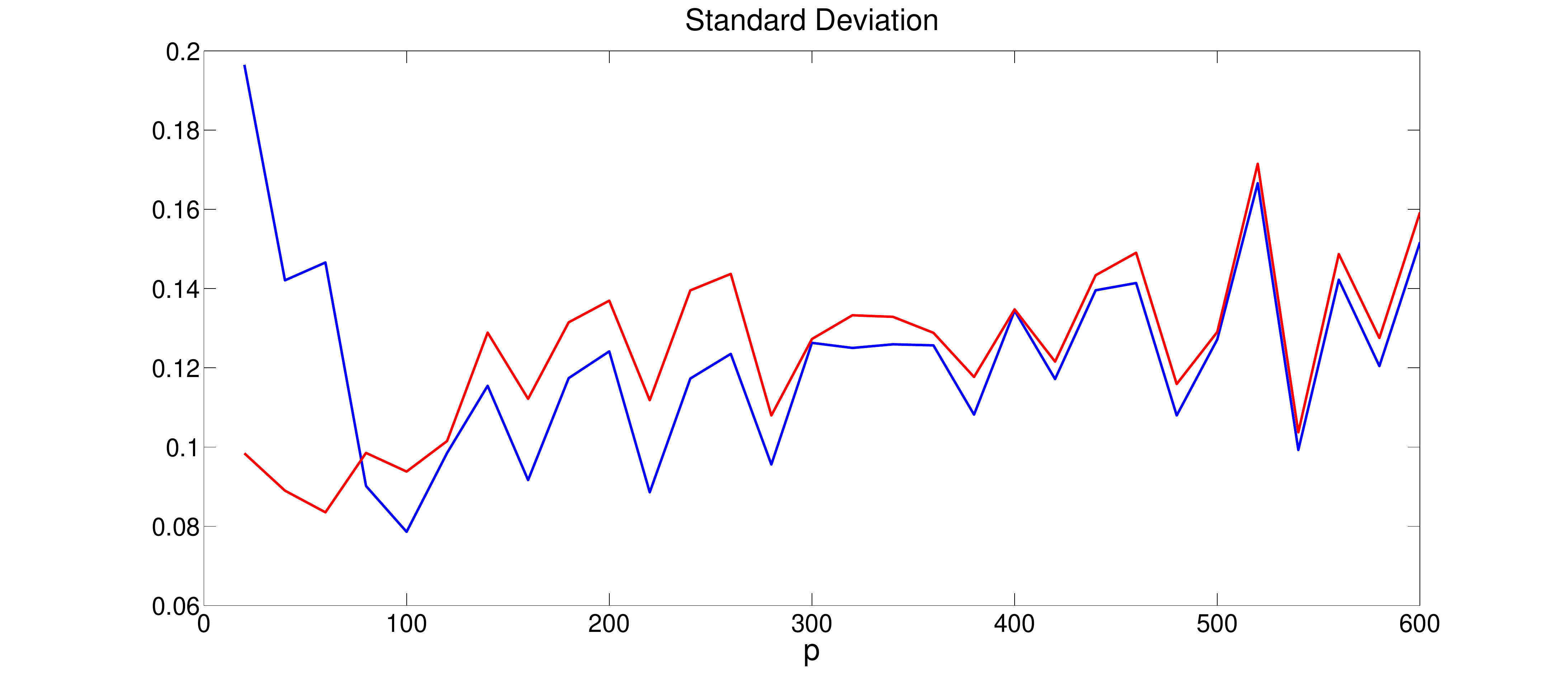}
\label{fig2}
\end{center}
\end{figure}

\begin{figure}[htbp]
\begin{center}
\caption{Averages (left panel) and standard deviations (right panel) of $\|\hSig-\Sig\|_{\max}$ with known factors ($\hSig=\hSig_{obs}$ solid red curve),  POET ($\hSig=\hSig_{\widehat{K}}$ solid blue curve),  and sample covariance ($\hSig=\hSig_{\sam}$ dashed curve)  over 200 simulations, as a function of the dimensionality $p$.   They are nearly     indifferentiable. }
\includegraphics[width=6cm]{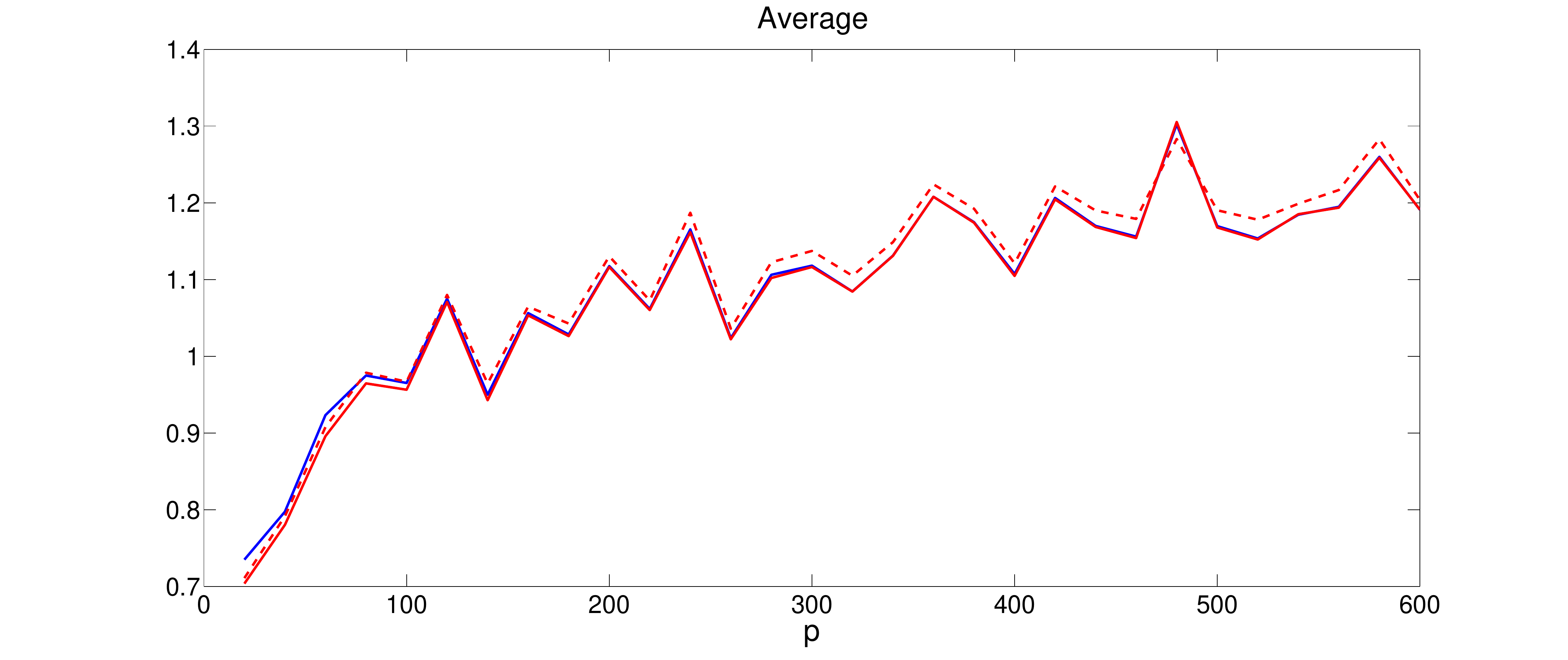}
\includegraphics[width=6cm]{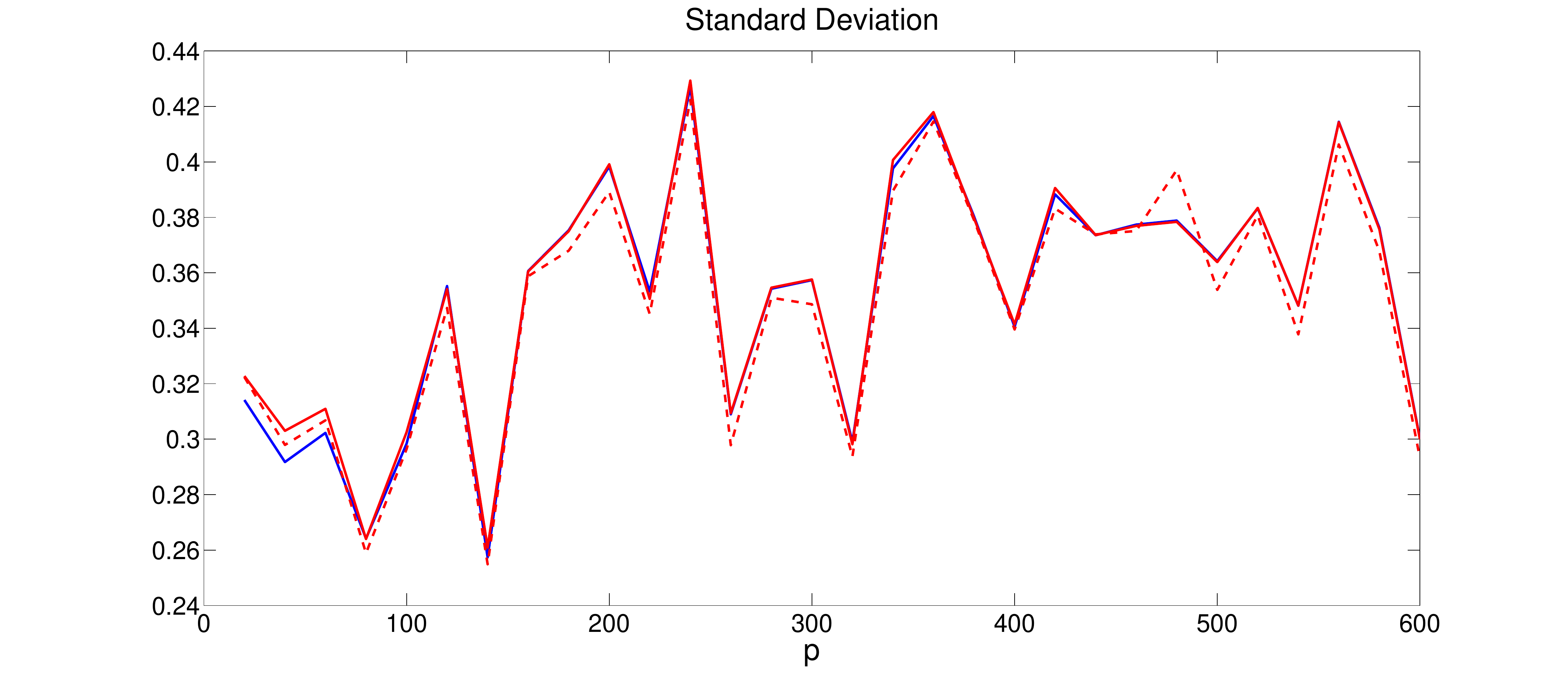}
\label{fig3}
\end{center}
\end{figure}

From the simulation results, reported in Figures~\ref{fig1}-\ref{fig4}, we observe that POET under the unobservable factor model  performs just as well as the estimator in Fan et al. (2011)   if the factors are known, when $p$ is large enough. The cost of not knowing the factors is approximately of  order $O_p(1/\sqrt p )$. It can be seen in Figures \ref{fig1} and \ref{fig2} that this cost vanishes for $p \ge 200$. To give a better insight of the impact of estimating the unknown factors for small $p$, a separate set of simulations is conducted for $p\leq 100$. As we can see from Figures \ref{fig1} (bottom panel) and \ref{fig2} (middle and bottom panels), the  impact decreases quickly. In addition, when estimating $\Sig^{-1}$,  it is hard to distinguish the estimators with known and unknown factors, whose performances are quite stable compared to the sample covariance matrix.
Also, the maximum absolute elementwise error (Figure~\ref{fig3}) of our estimator performs very similarly to that of the sample covariance matrix, which coincides with our asymptotic result.  Figure~\ref{fig4} shows that the performances of the three methods   are indistinguishable in the spectral norm, as expected.

\begin{figure}[htbp]
\begin{center}
\caption{Averages of $\|\hSig-\Sig\|$ (left panel) and $ \|\Sig^{-1/2}\hSig \Sig^{-1/2} - \bI_p\| $ with known factors ($\hSig=\hSig_{obs}$ solid red curve),  POET ($\hSig=\hSig_{\widehat{K}}$ solid blue curve),  and sample covariance ($\hSig=\hSig_{\sam}$ dashed curve)  over 200 simulations, as a function of the dimensionality $p$.  The three curves  are hardly distinguishable on the left panel.}
\includegraphics[width=7cm]{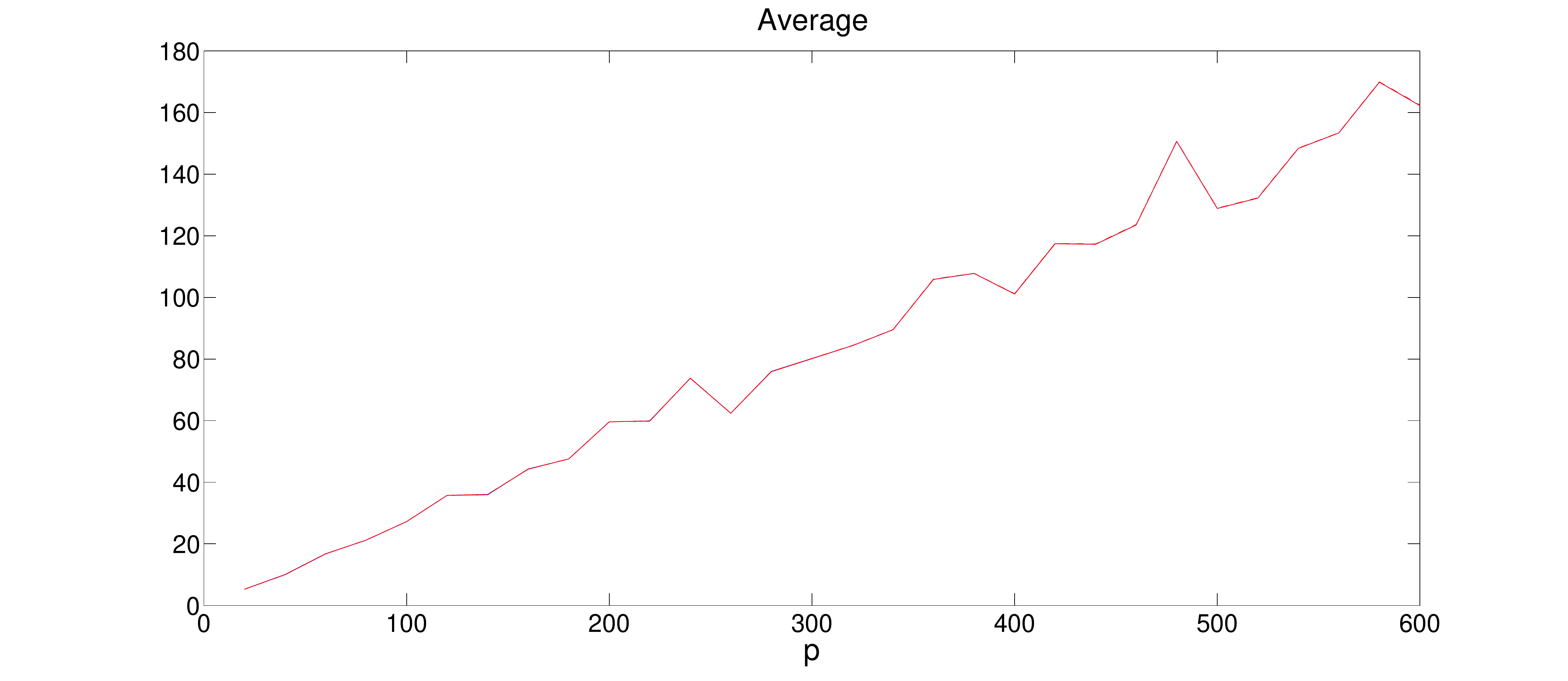}
\includegraphics[width=6.3cm]{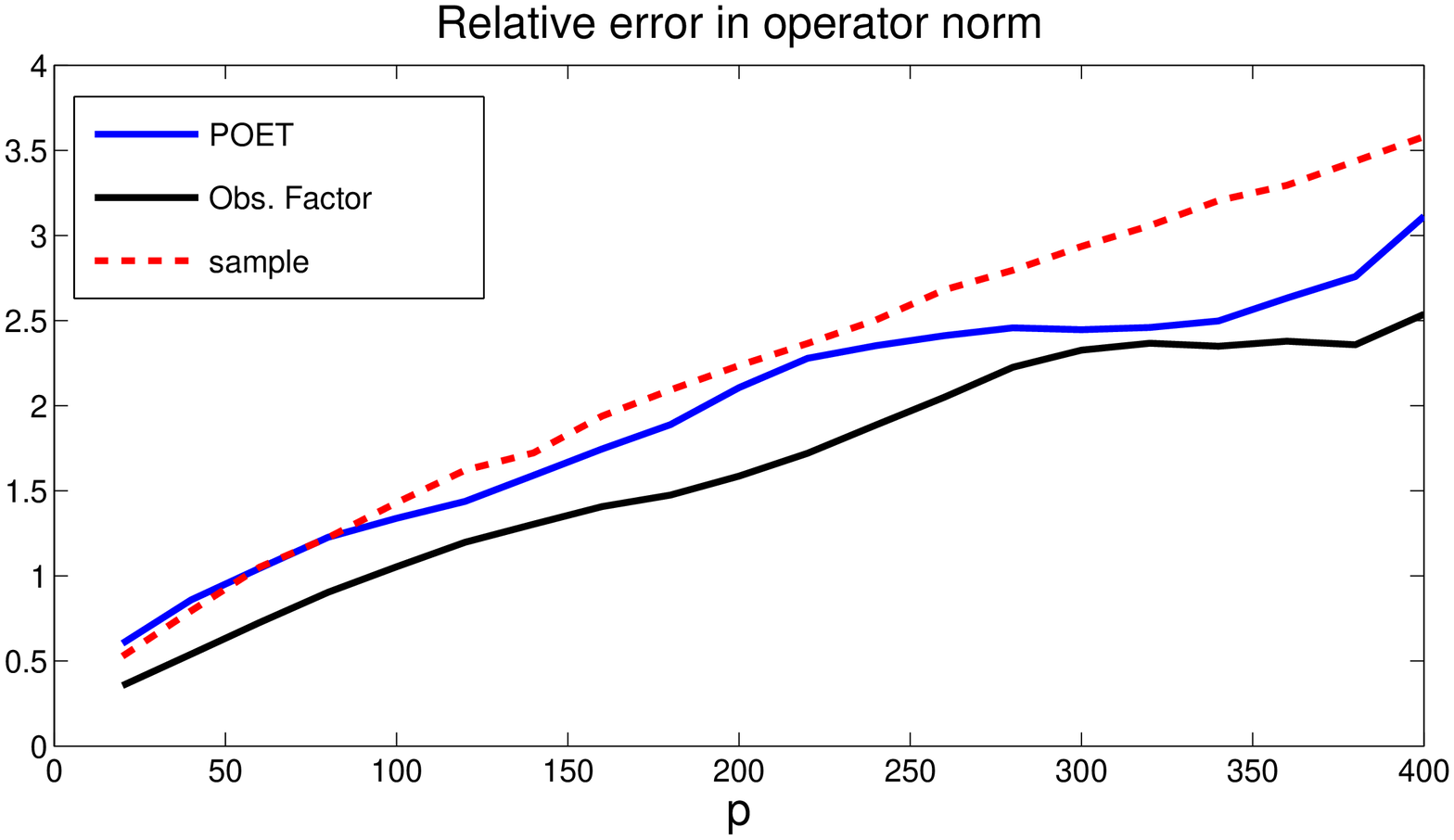}
\label{fig4}
\end{center}

\end{figure}

\subsection{Robustness to the estimation of $K$}
The POET estimator depends on the estimated number of factors. Our theory uses a consistent esimator
 $\widehat{K}$.  To assess the robustness of our procedure  to $\widehat{K}$ in finite sample, we calculate  $\hSig_{u,K}^{\mathcal{T}}$   for $K=1, 2,...,10$. Again, the threshold is fixed to be $ 0.5\sqrt{\hat{\theta}_{ij}}(\sqrt{\frac{\log p}{T}}+\frac{1}{\sqrt{p}})$.

\subsubsection{Design 1}

 The simulation setup is the same as before where the true $K_0=3$. We calculate  $\|\hSig_{u,K}^{\mathcal{T}}-\Sig_u\|$ ,  $\|(\hSig_{u,K}^{\mathcal{T}})^{-1}-\Sig_u^{-1}\|$,  $\|\hSig^{-1}_K-\Sig^{-1}\|$ and $\|\hSig_K-\Sig\|_{\Sigma}$ for $K=1,2,...,10$.
 Figure \ref{robK1} plots these norms as $p$ increases but with a fixed $T=300$. The results demonstrate a trend that is quite robust when $K\geq 3$; especially, the estimation accuracy of the spectral norms for large $p$ are close to each other. When $K=1$ or 2, the estimators perform badly due to modeling bias. Therefore, POET is robust to over-estimated $K$, but not to under-estimation.

\begin{figure}[htbp]
\begin{center}
\caption{Robustness of $K$ as $p$ increases for various choices of $K$ (Design 1, $T=300$).  Top left: $\|\hSig_{u,K}^{\mathcal{T}}-\Sig_u\|$; top right: $\|(\hSig_{u,K}^{\mathcal{T}})^{-1}-\Sig_u^{-1}\|$; bottom left: $\|\hSig_K-\Sig\|_{\Sigma}$; bottom right: $\|\hSig_K^{-1}-\Sig^{-1}\|$.}
\includegraphics[width=8cm]{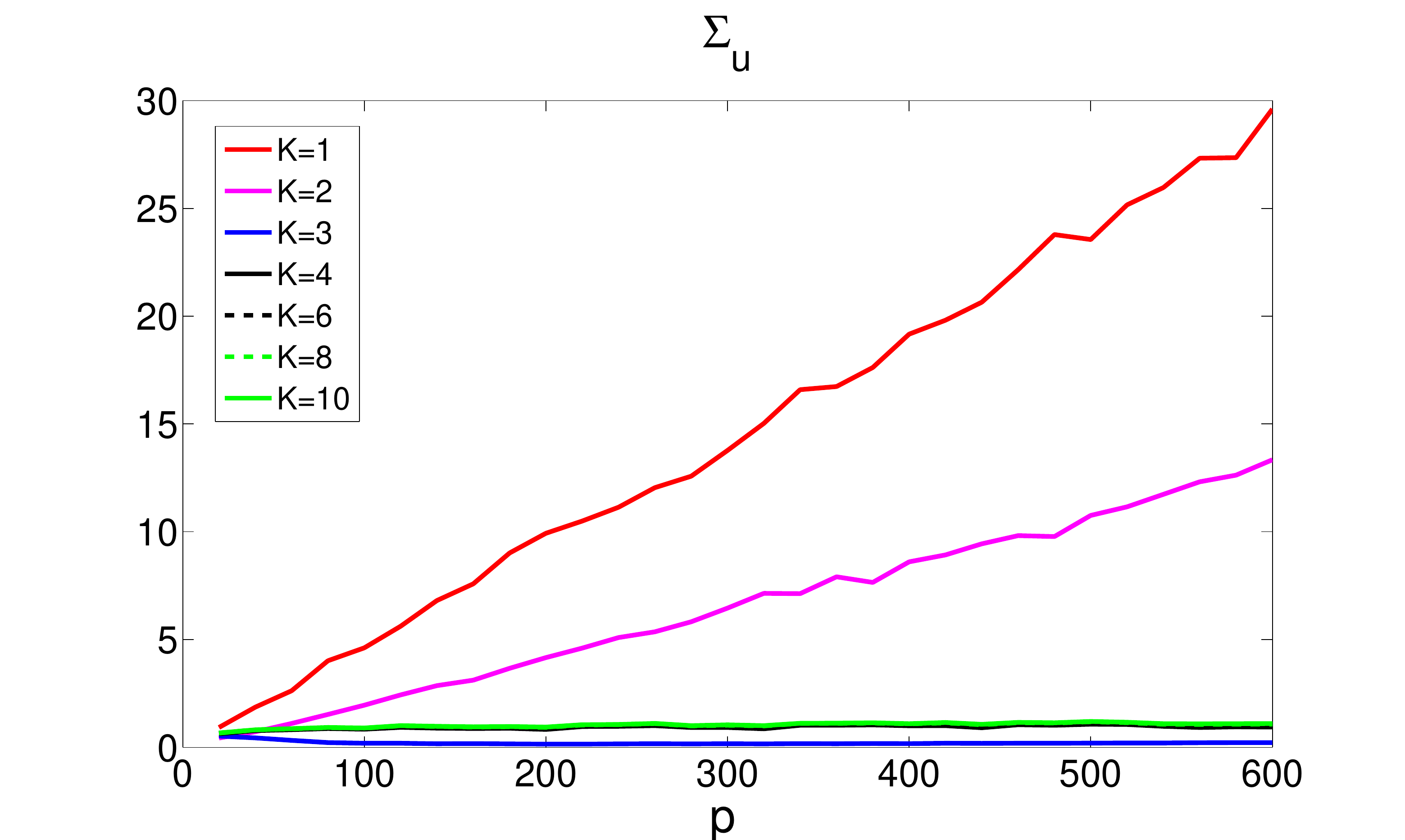}
\includegraphics[width=8cm]{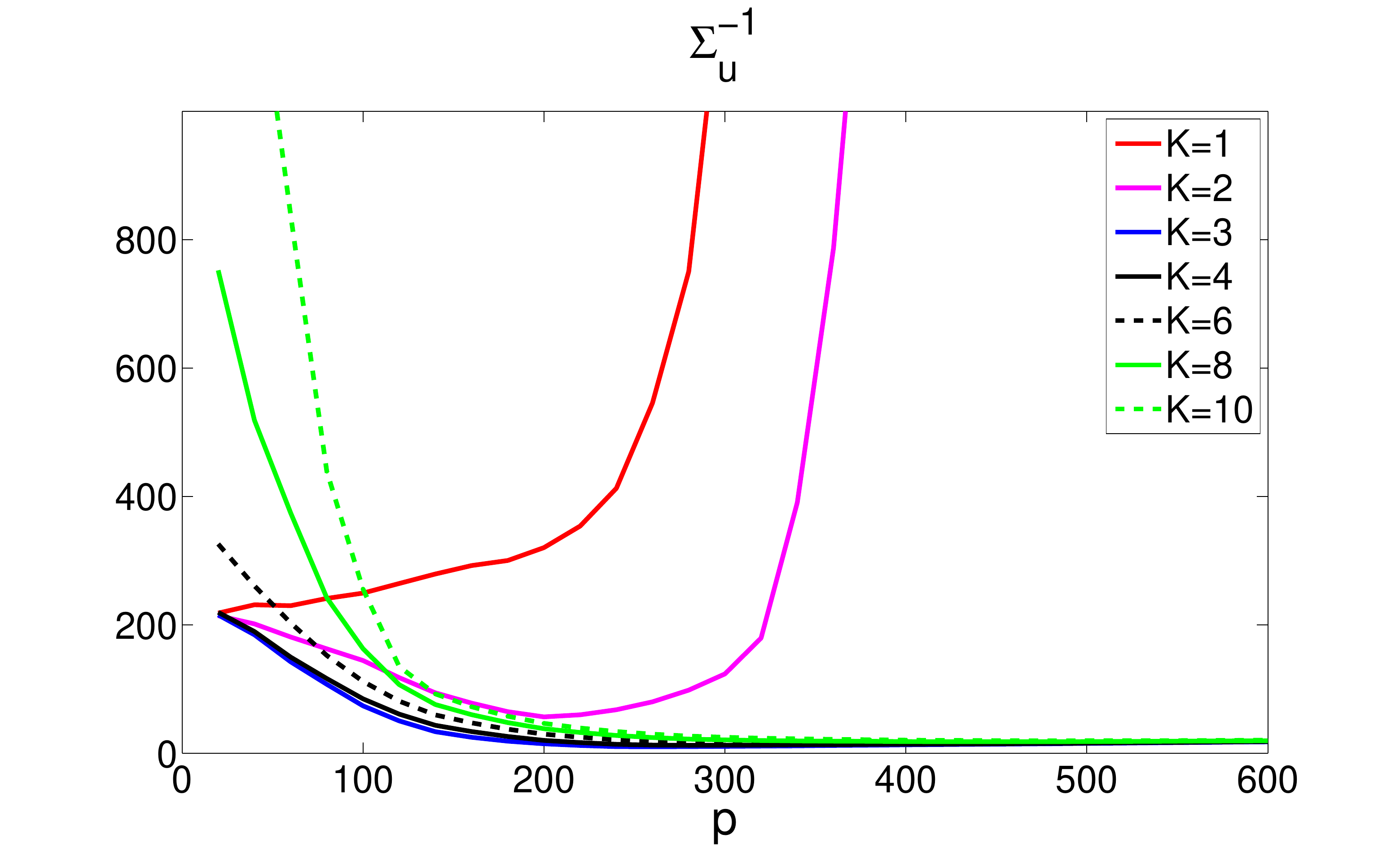}
\includegraphics[width=8cm]{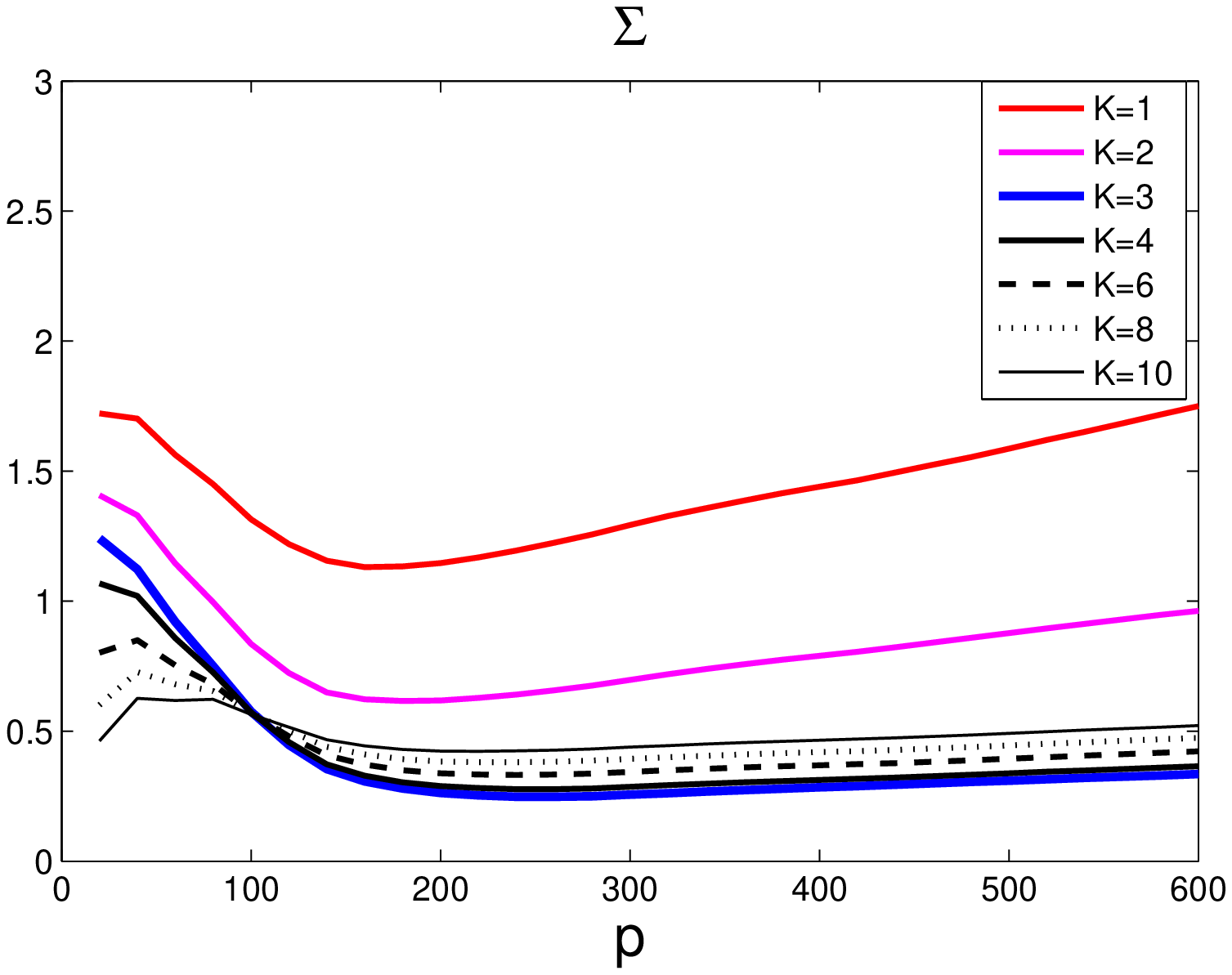}
\includegraphics[width=8cm]{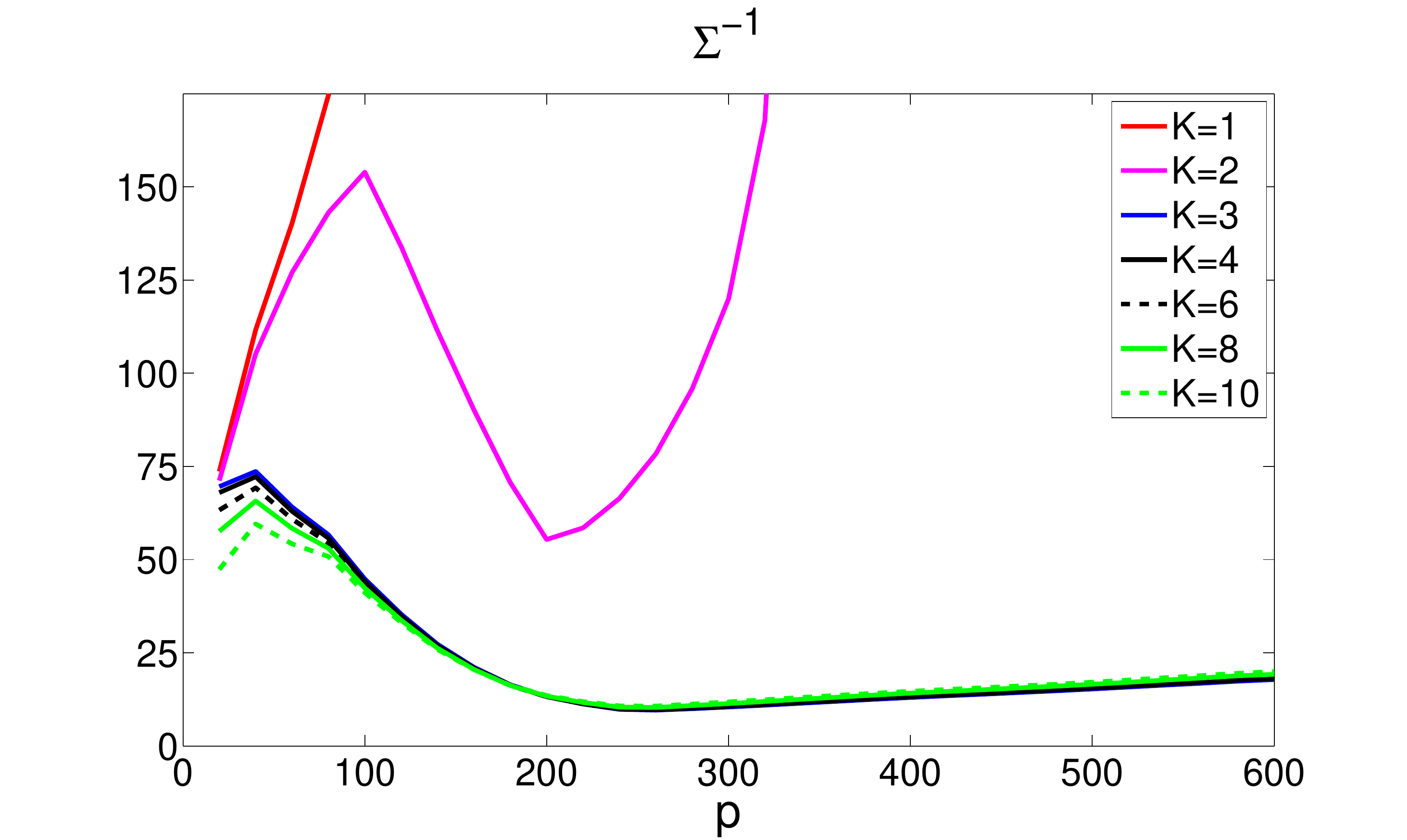}
\label{robK1}
\end{center}
\end{figure}

\subsubsection{Design 2}

We also simulated from a new data generating process for the robustness assessment. Consider a banded idiosyncratic matrix
$$
\sigma_{u,ij}=\begin{cases}
0.5^{|i-j|},&  |i-j|\leq 9\\
0,& |i-j|>9\end{cases},
\quad (\bu_1,...,\bu_T)\sim^{i.i.d.} N_p(0,\Sig_u).
$$
We still consider a $K_0=3$ factor model, where the factors are independently simulated as
$$f_{it}\sim N(0,1),\quad b_{ji}\sim N(0,1), \quad i\leq 3, j\leq p, t\leq T,$$
Table \ref{tableRob} summarizes the average estimation error of covariance matrices across $K$ in the spectral norm.  Each simulation is replicated 50 times and $T=200$.

 \begin{table}[htdp]

\begin{center}
\caption{Robustness of $K$. Design 2, estimation errors in spectral norm}

\label{tableRob}

\begin{tabular}{cc|ccccccccc}

\hline
 &  &  &  &  &   $K$ & &    & \\
  &  & 1 & 2 & 3 & 4 & 5 & 6 & 8 \\
  \hline
   &  &  &  &  &  &  &    & \\
$p=100$ & $\hSig_{u,K}^{\mathcal{T}}$ & 10.70 & 5.23 & 1.63 & 1.80 & 1.91 & 2.04 & 2.22 \\
 &  $(\hSig_{u,K}^{\mathcal{T}})^{-1}$ & 2.71 & 2.51 & 1.51 & 1.50 & 1.44 & 1.84 & 2.82 \\
 & $\hSig_{K}^{-1}$ & 2.69 & 2.48 & 1.47 & 1.49 & 1.41 & 1.56 & 2.35 \\
 &  $\hSig_{K}$ & 94.66 & 91.36 & 29.41 & 31.45 & 30.91 & 33.59 & 33.48 \\
 & $\Sig^{-1/2}\hSig_K\Sig^{-1/2}$& 17.37 & 10.04 & 2.05 & 2.83 & 2.94 & 2.95 & 2.93 \\
 \hline
 &  &  &  &  &  &  &  &  & \\
$p=200$ & $\hSig_{u,K}^{\mathcal{T}}$ & 11.34 & 11.45 & 1.64 & 1.71 & 1.79 & 1.87 & 2.01 \\
 &  $(\hSig_{u,K}^{\mathcal{T}})^{-1}$ & 2.69 & 3.91 & 1.57 & 1.56 & 1.81 & 2.26 & 3.42 \\
 & $\hSig_{K}^{-1}$ & 2.67 & 3.72 & 1.57 & 1.55 & 1.70 & 2.13 & 3.19 \\
 &  $\hSig_{K}$ & 200.82 & 195.64 & 57.44 & 63.09 & 64.53 & 60.24 & 56.20 \\
 & $\Sig^{-1/2}\hSig_K\Sig^{-1/2}$ & 20.86 & 14.22 & 3.29 & 4.52 & 4.72 & 4.69 & 4.76 \\
 \hline
 &  &  &  &  &  &  &  &   \\
$p=300$ & $\hSig_{u,K}^{\mathcal{T}}$ & 12.74 & 15.20 & 1.66 & 1.71 & 1.78 & 1.84 & 1.95 \\
 &  $(\hSig_{u,K}^{\mathcal{T}})^{-1}$ & 7.58 & 7.80 & 1.74 & 2.18 & 2.58 & 3.54 & 5.45 \\
 & $\hSig_{K}^{-1}$ & 7.59 & 7.49 & 1.70 & 2.13 & 2.49 & 3.37 & 5.13 \\
 &  $\hSig_{K}$ & 302.16 & 274.12 & 87.92 & 92.47 & 91.90 & 83.21 & 92.50 \\
 &$\Sig^{-1/2}\hSig_K\Sig^{-1/2}$ & 23.43 & 16.89 & 4.38 & 6.04 & 6.16 & 6.14 & 6.20 \\
\hline

\end{tabular}
\end{center}
\end{table}

Table \ref{tableRob} illustrates some interesting patterns. First of all, the best estimation accuracy is achieved when $K=K_0$. Second,   the estimation is robust for $K\geq K_0$. As $K$ increases from $K_0$, the estimation error becomes larger, but is increasing slowly in general, which  indicates the robustness when a slightly larger $K$ has been used.  Third, when the number of factors is under-estimated, corresponding to $K=1, 2$, all the estimators perform badly, which demonstrates the danger of missing any common factors. Therefore,   over-estimating the number of factors, while still maintaining a satisfactory estimation accuracy of the covariance matrices, is much better than under-estimating. The resulting bias caused by under-estimation is more severe than the additional variance introduced by over-estimation.  Finally, estimating $\Sig$, the covariance of $\by_t$, does not achieve a good accuracy even when $K=K_0$ in the absolute term $\|\hSig-\Sig\|$, but the relative error $\|\Sig^{-1/2}\hSig_K\Sig^{-1/2} - \bI_p\|$ is much smaller. This is consistent with our discussions in Section 3.3.

\subsection{Comparisons with Other Methods}

\subsubsection{Comparison with related methods}
We compare POET with related methods that address low-rank plus sparse covariance estimation, specifically,  LOREC proposed by Luo (2012), the strict factor model (SFM) by Fan, Fan and Lv (2008),  the Dual Method (Dual) by Lin et al. (2009),  and finally, the singular value thresholding (SVT) by Cai, Cand\`{e}s and Shen (2008). In particular, SFM is a special case of POET which employs a large threshold that forces $\hSig_u$ to be diagonal even when the true $\Sig_u$ might not be. Note that Dual, SVT  and many others dealing with low-rank plus sparse, such as Cand\`{e}s et al. (2011) and Wright et al. (2009), assume a known $\Sig$ and focus on recovering the decomposition. Hence they do not estimate $\Sig$ or its inverse, but    decompose the sample covariance into two components. The resulting sparse component may not be positive definite, which can lead to large estimation errors for $\hSig_u^{-1}$ and $\hSig^{-1}$.

Data are generated from the same setup as Design 2 in  Section 6.4. Table \ref{comp} reports the averaged estimation error of the four  comparing methods, calculated based on 50 replications for each simulation. Dual  and SVT assume the data matrix has a low-rank plus sparse representation, which is not the case for the sample covariance matrix (though the population $\Sig$ has such a representation).  
The tuning parameters for POET, LOREC, Dual  and SVT are chosen to achieve the best performance for each method.
\footnote{We used the R package for LOREC developed by Luo (2012) and the Matlab codes for Dual and SVT provided on Yi Ma's website ``Low-rank matrix recovery and completion via convex optimization" at University of Illinois. The tuning parameters for each method have been chosen to minimize the sum of relative errors $\|\Sig^{-1/2}\hSig \Sig^{-1/2} - \bI_p\| + \|\Sig_u^{-1/2}\hSig_u\Sig_u^{-1/2} - \bI_p\|$. We have also written an R package for POET.}

\begin{table}[htdp]
\begin{center}
\caption{Method Comparison under spectral norm for $T = 100$. RelE represents the relative error $\|\Sig^{-1/2}\hSig\Sig^{-1/2}-\bI_p\|$}

\label{comp}
\begin{tabular}{cc|ccccc}

\hline
 &  & $\hSig_u$ & $\hSig_u^{-1}$ &RelE& $\hSig^{-1}$ & $\hSig$\\
 \hline
  &  &  &  &  &  & \\
$p=100$ & POET & 1.624 & 1.336 & 2.080 & 1.309 & 29.107\\
 & LOREC & 2.274 & 1.880 & 2.564 & 1.511 & 32.365\\
 & SFM & 2.084 & 2.039 & 2.707 & 2.022 & 34.949\\
 & Dual & 2.306 & 5.654 & 2.707 & 4.674 & 29.000 \\
 & SVT & 2.59& 13.64 & 2.806 & 103.1 &29.670 \\
 \hline
 &  &  &  &  &  & \\
$p=200$ & POET & 1.641 & 1.358 & 3.295 & 1.346 & 58.769\\
 & LOREC & 2.179 & 1.767 & 3.874 & 1.543 & 62.731\\
 & SFM & 2.098 & 2.071 & 3.758 & 2.065 & 60.905\\
 & Dual & 2.41 & 6.554 & 4.541 & 5.813 & 56.264\\
 & SVT & 2.930 & $362.5$ & 4.680 &47.21  &63.670 \\
 \hline
 &  &  &  &  &  & \\
$p=300$ & POET & 1.662 & 1.394 & 4.337 & 1.395 & 65.392\\
 & LOREC & 2.364 & 1.635 & 4.909 & 1.742 & 91.618\\
 & SFM & 2.091 & 2.064 & 4.874 & 2.061 & 88.852\\
 & Dual & 2.475 & 2.602 & 6.190 &   2.234& 74.059 \\
 & SVT & 2.681 & $>10^3$ & 6.247& $>10^3$ & 80.954 \\
 \hline

\end{tabular}
\end{center}
\end{table}

\subsubsection{Comparison with direct thresholding}

This section compares POET with   direct thresholding on the sample covariance matrix without taking out common factors (Rothman et al. 2009, Cai and Liu 2011. We denote this method by THR). We also run simulations to demonstrate the finite sample performance when $\Sig$ itself is sparse and has bounded eigenvalues, corresponding to the case $K=0$. Three models are considered and both POET and THR use the soft  thresholding.  We fix $T=200$. Reported results are the average of 100 replications.

\textbf{Model 1:  one-factor.}  The factors and loadings are independently generated from $N(0,1)$. The error covariance is the same banded matrix as Design 2 in Section 6.4.  Here $\Sig$ has one diverging eigenvalue.

\textbf{Model 2:  sparse covariance.}  Set $K=0$, hence $\Sig=\Sig_u$ itself is a banded matrix with bounded eigenvalues.

\textbf{Model 3:  cross-sectional AR(1).} Set $K=0$, but   $\Sig=\Sig_u=(0.85^{|i-j|})_{p\times p}$. Now $\Sig$ is no longer sparse (or banded), but is not too dense either since  $\Sigma_{ij} $  decreases to zero exponentially fast  as $|i-j|\rightarrow\infty$. This is the correlation matrix if $\{y_{it}\}_{i=1}^p$ follows a cross-sectional AR(1) process: $y_{it}=0.85y_{i-1,t}+\varepsilon_{it}$.

For each model,   POET uses an estimated $\widehat{K}$ based on IC1 of Bai and Ng (2002), while THR thresholds the sample covariance directly. We find that in Model 1, POET performs significantly better than THR as the latter misses the common factor. For Model 2, IC1 estimates $\widehat{K}=0$ precisely in each replication, and hence POET is identical to THR. For Model 3,   POET still outperforms.  The results are summarized in  Table \ref{comp2}.

\begin{table}[htdp]
\begin{center}
\caption{Method Comparison. $T=200$}

\label{comp2}
\begin{tabular}{cc|cc|cc|c}
\hline
 &  &  \multicolumn{2}{c|}{ $\|\hSig-\Sig\|$ }      &   \multicolumn{2}{c|}{ $\|\hSig^{-1}-\Sig^{-1}\|$ }     &\\
 &  & POET & THR & POET & THR & $\widehat{K}$\\
 \hline
  &  &  &  &  &  & \\
 $p=200$ & Model 1 & 26.20 & 240.18 & 1.31 & 2.67&1 \\
 & Model 2 & 2.04 & 2.04 & 2.07 & 2.07&0 \\
 & Model 3 & 7.73& 11.24 & 8.48&  11.40 &6.2\\
 \hline
 &  &  &  &  &  & \\
$p=300$ & Model 1 & 32.60 & 314.43 & 2.18 & 2.58&1 \\
 & Model 2 & 2.03 & 2.03 & 2.08 & 2.08 &0\\
 & Model 3 & 9.41  &11.29  &8.81   &11.41   &5.45 \\

 \hline

\end{tabular}

The reported numbers are the averages based on 100 replications.
\end{center}
\end{table}

\subsection{Simulated portfolio allocation}
We demonstrate the improvement of our method compared to the sample covariance and that based on the strict factor model (SFM), in a problem of portfolio allocation for risk minimization purposes.

Let $\hSig$ be a generic estimator of the covariance matrix of the return vector $\by_t$, and $\bw$ be the allocation vector of a portfolio consisting of the corresponding $p$ financial securities. Then the theoretical and the empirical risk of the given portfolio are $R(\bw)=\bw'\Sig\bw$ and $\hat R(\bw)=\bw'\hSig\bw$, respectively. Now, define
$$
\hw=\argmin_{\bw'{\bf 1} =1}\bw'\hSig\bw,
$$
the estimated (minimum variance) portfolio. Then the  actual risk of the estimated portfolio is defined as $ R(\hw)=\hw'\Sig\hw$, and the estimated risk (also called empirical risk) is equal to $\hat R (\hw)=\hw'\hSig\hw$. In practice, the actual risk is unknown, and only the empirical risk can be calculated.  

For each fixed $p$, the population $\Sig$ was generated in the same way as described in Section 6.1, with a sparse but not diagonal error covariance.  We use three different methods to estimate $\Sig$ and obtain $\hw$:  strict factor model $\hSig_{\diag}$ (estimate $\Sig_u$ using a diagonal matrix),  our POET estimator $\hSig_{\text{POET}}$, both are with unknown factors, and sample covariance $\hSig_{\text{sam}}$. We then calculate the corresponding  actual and empirical risks.

It is interesting to examine the accuracy and the performance of the actual risk of our portfolio $\hw$ in comparison to the oracle risk $R^*=\min_{\bw'\bone=1}\bw'\Sig\bw$,  which is the theoretical risk of the portfolio we would have created if we knew the true covariance matrix $\Sig$.  We thus compare
the regret $R(\hw)-R^*$, which is always nonnegative, for three estimators of $\hSig$. They are summarized by using the box plots over the 200 simulations.
The results are reported in Figure~\ref{fig5}.  In practice, we are also  concerned about the difference between the actual and empirical risk of the chosen portfolio $\hw$.  Hence, in Figure~\ref{fig6}, we also compare the average estimation error $|R(\hw)-\hat R(\hw)|$  and the average relative estimation error $|\hat R(\hw)/R(\hw) -1|$ over 200 simulations.
When $\hw$ is obtained based on the strict factor model, both differences - between actual and oracle risk, and between actual and empirical risk, are persistently greater than the corresponding differences for the approximate factor estimator.  Also, in terms of the relative estimation error, the factor model based method is negligible, where as the sample covariance does not process such a property.

\begin{figure}[htbp]
\begin{center}
\caption{Box plots of regrets $R(\hw)-R^*$ for $p = 80$ and $140$. In each panel, the box plots from left to right correspond to $\hw$ obtained using  $\hSig$ based on approximate factor model, strict factor model, and sample covariance, respectively.}
\includegraphics[width=6cm]{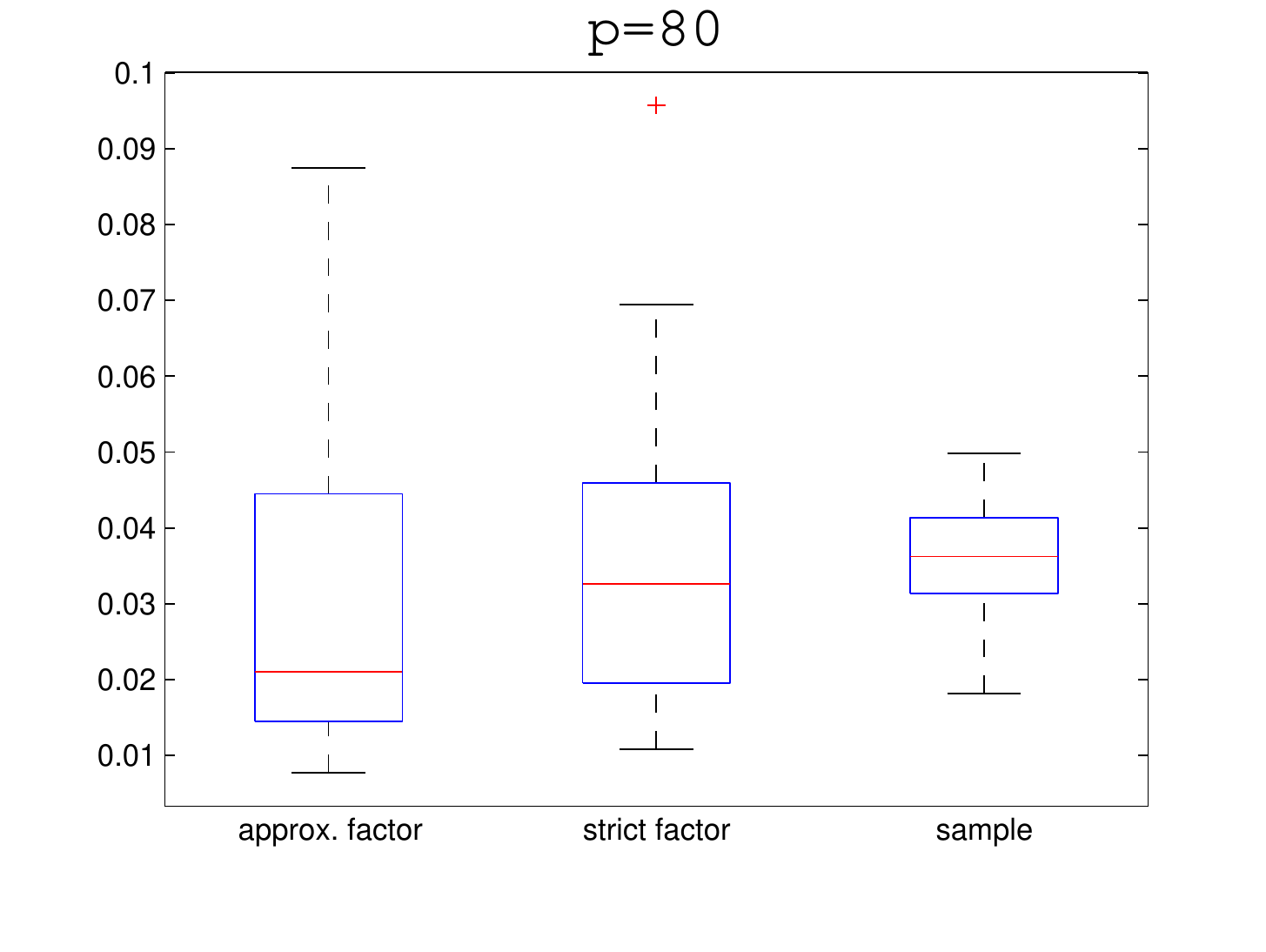}
\includegraphics[width=6cm]{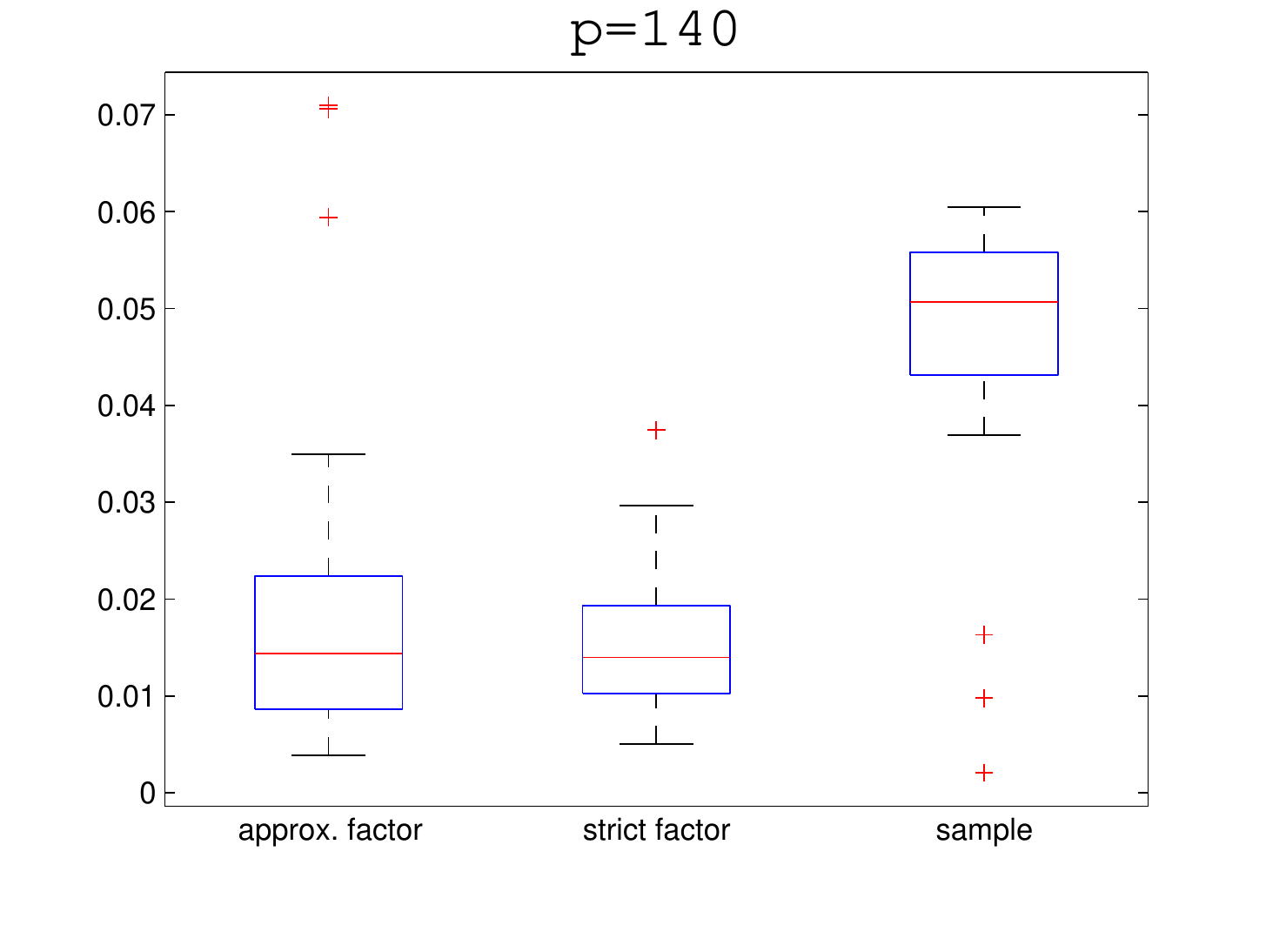}
\label{fig5}
\end{center}
\end{figure}

\begin{figure}[htbp]
\begin{center}
\caption{Estimation errors for risk assessments as a function of the portfolio size $p$. Left panel plots the average absolute error $|R(\hw)-\hat R(\hw)|$ and right panel depicts the average relative error  $|\hat{R}(\hw)/ R(\hw)-1|$.  Here,
$\hw$ and $\hat{R}$ are obtained based on three estimators of $\hSig$.
}
\includegraphics[width=6cm]{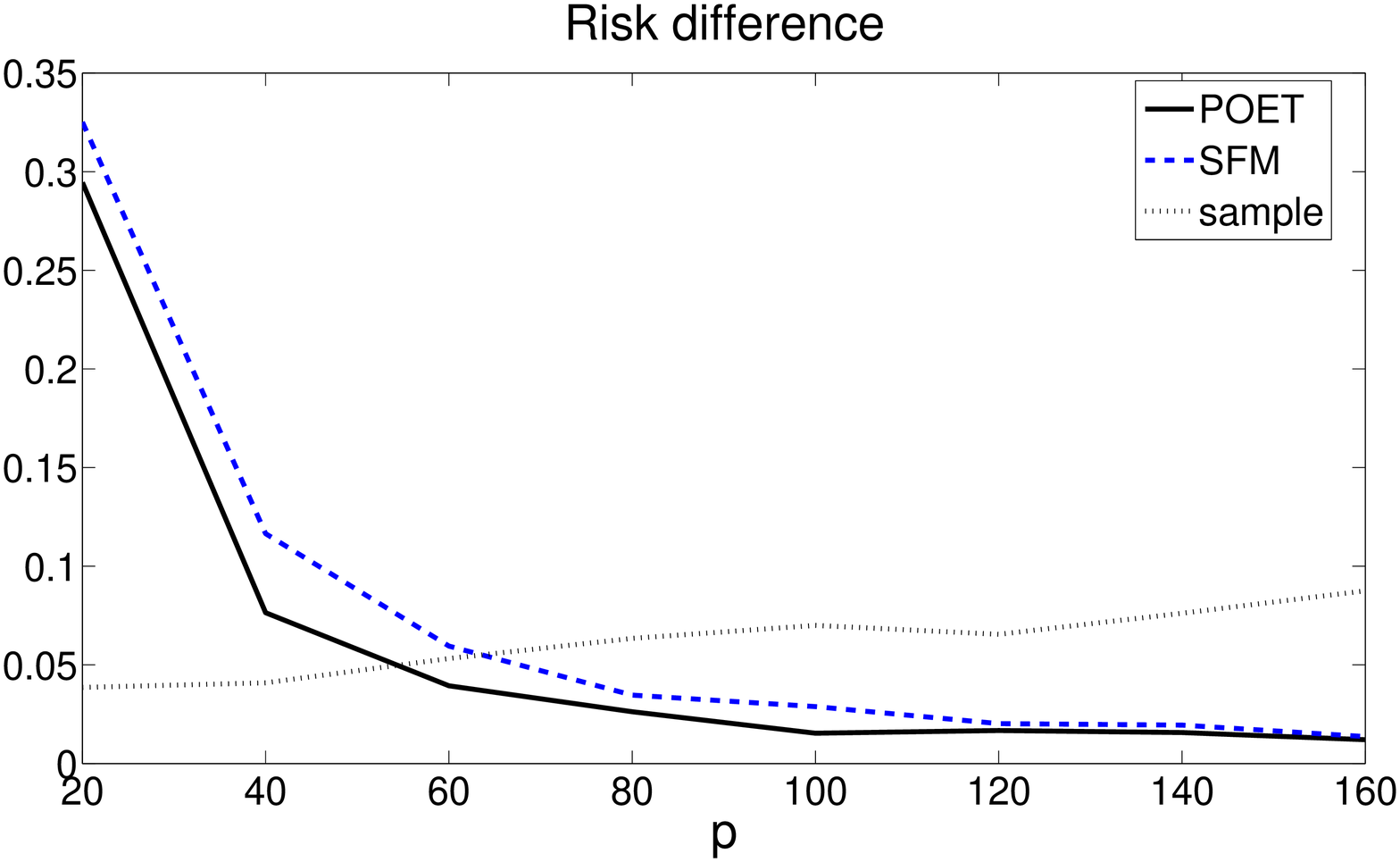}
\includegraphics[width=6cm]{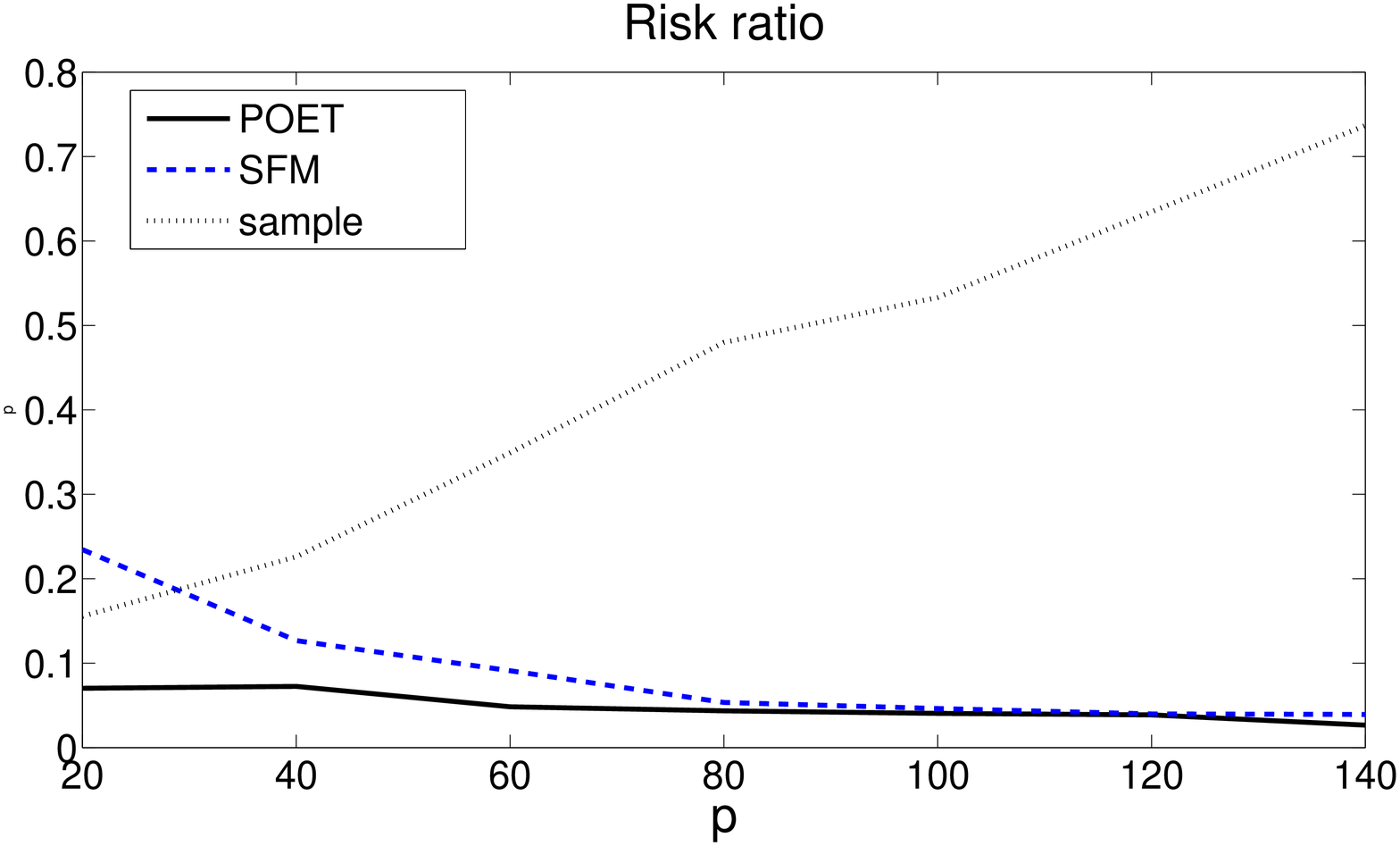}
\label{fig6}
\end{center}
\end{figure}

\section{Real Data Example}
We demonstrate the sparsity of  the approximate factor model on real data, and present the improvement of the POET estimator over the strict factor model (SFM) in a real-world application of portfolio allocation.

\subsection{Sparsity of Idiosyncratic Errors}
 The data were  obtained from the CRSP (The Center for Research in Security Prices) database, and consists of $p = 50$ stocks and their annualized daily returns for the period January $1^{st},2010$-December $31^{st}, 2010$  ($T = 252$). The stocks are chosen from $5$ different industry sectors,  (more specifically, Consumer Goods-Textile $\&$ Apparel Clothing,  Financial-Credit Services, Healthcare-Hospitals, Services-Restaurants, Utilities-Water utilities), with $10$ stocks from each sector. We made this selection to demonstrate a block diagonal trend in the sparsity. More specifically, we show that the non-zero elements are clustered mainly within companies in the same industry. We also notice that these are the same groups that show predominantly positive correlation.

The largest eigenvalues of the sample covariance equal $0.0102, 0.0045$ and $0.0039$, while the rest are bounded by $0.0020$. Hence   $K=0, 1, 2, 3$ are the possible values of the number of factors.   Figure \ref{fig7} shows the heatmap of the thresholded error correlation matrix (for simplicity, we applied hard thresholding). The threshold  has been chosen using the cross validation as described in Section 4. We compare the level of sparsity (percentage of non-zero off-diagonal elements) for the 5 diagonal blocks of size $10 \times 10$, versus the sparsity of the rest of the matrix. For $K=2$, our method results in $25.8\%$ non-zero off-diagonal elements in the $5$ diagonal blocks, as opposed to $7.3\%$ non-zero elements in the rest of the covariance matrix. Note that, out of the non-zero elements in the central $5$ blocks, $100\%$ are positive, as opposed to a distribution of $60.3\%$ positive and $39.7\%$ negative amongst   the non-zero elements in off-diagonal blocks. There is a strong positive correlation between the returns of companies in the same industry after the common factors are taken out, and the thresholding has preserved them. The results for $K=1, 2$ and $3$ show the same characteristics.  These provide stark evidence that the strict factor model is not appropriate.

\begin{figure}[htbp]
\begin{center}
\caption{Heatmap of thresholded error correlation matrix for number of factors $K=0$, $K=1$, $K=2$ and $K=3$.}
\includegraphics[width=6.3cm]{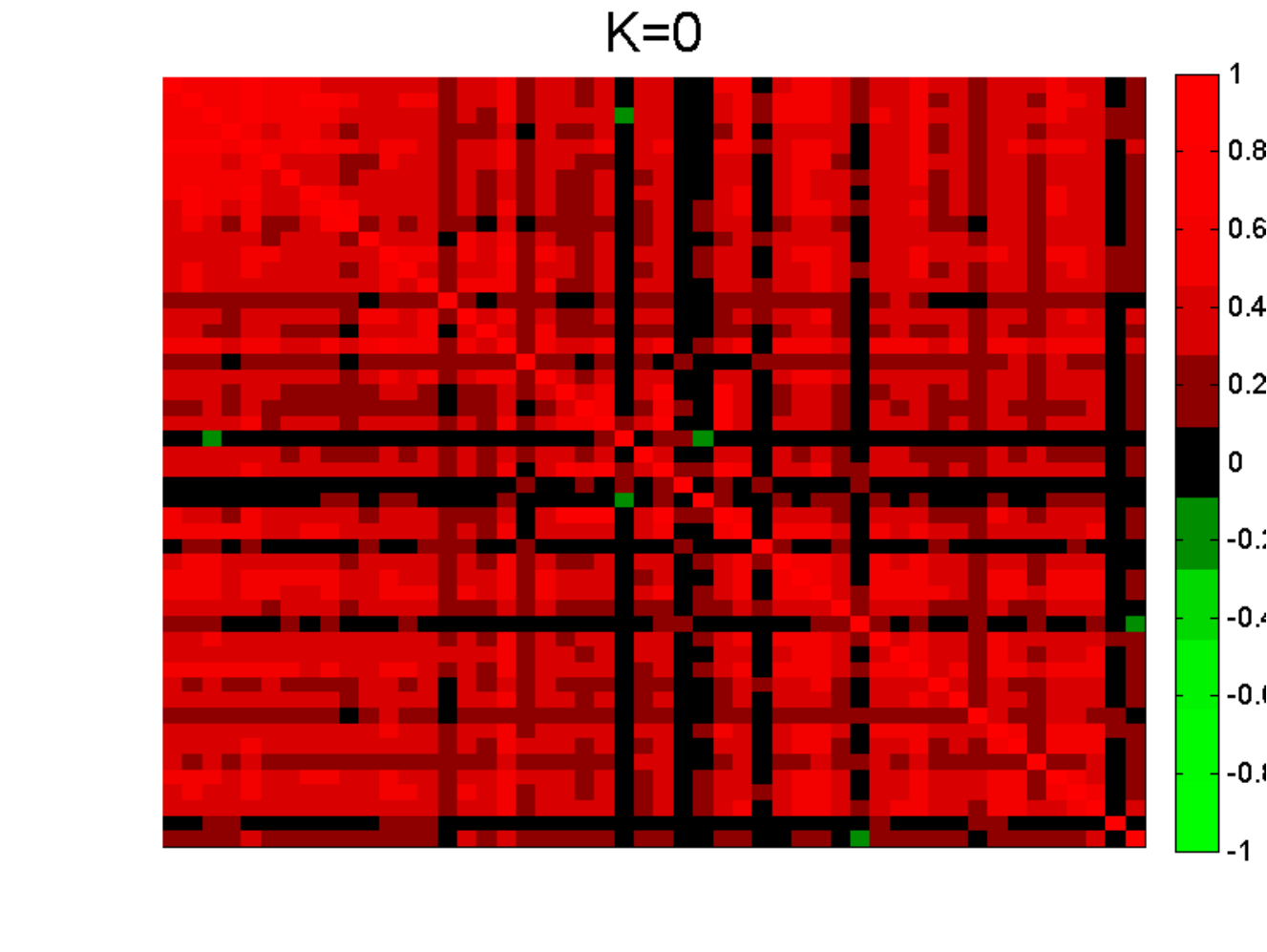}
\includegraphics[width=6.3cm]{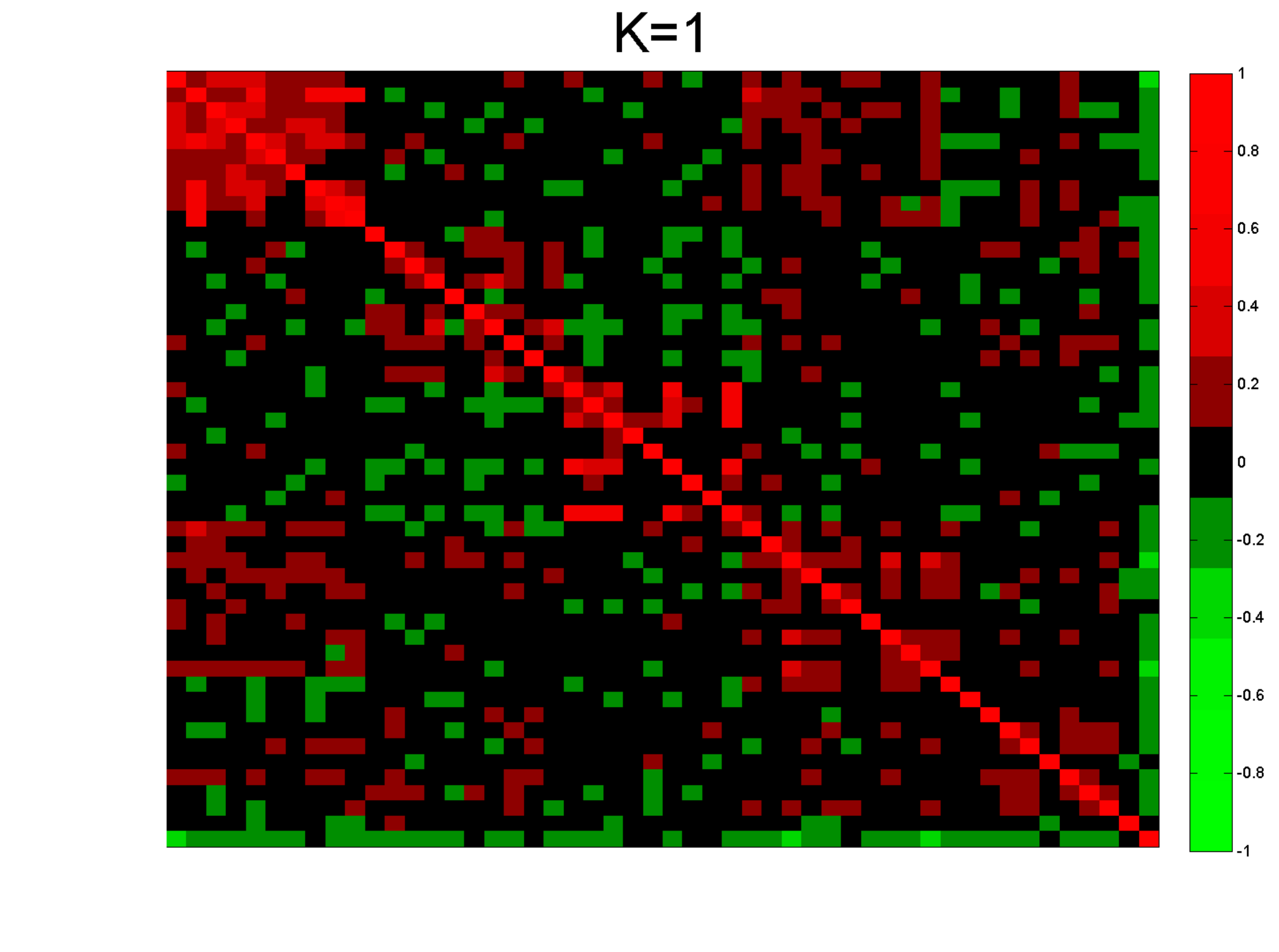}
\includegraphics[width=6.3cm]{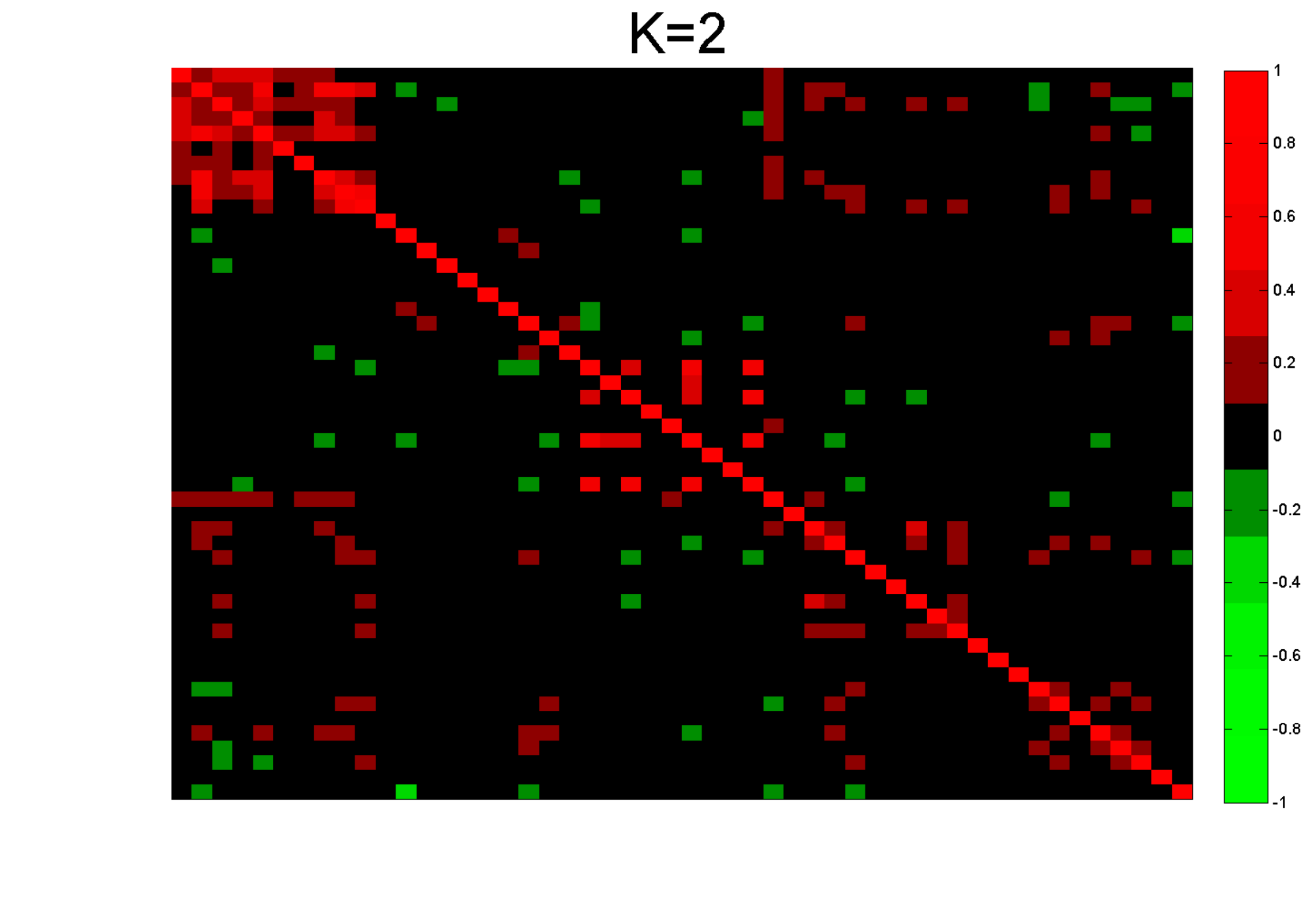}
\includegraphics[width=6.3cm]{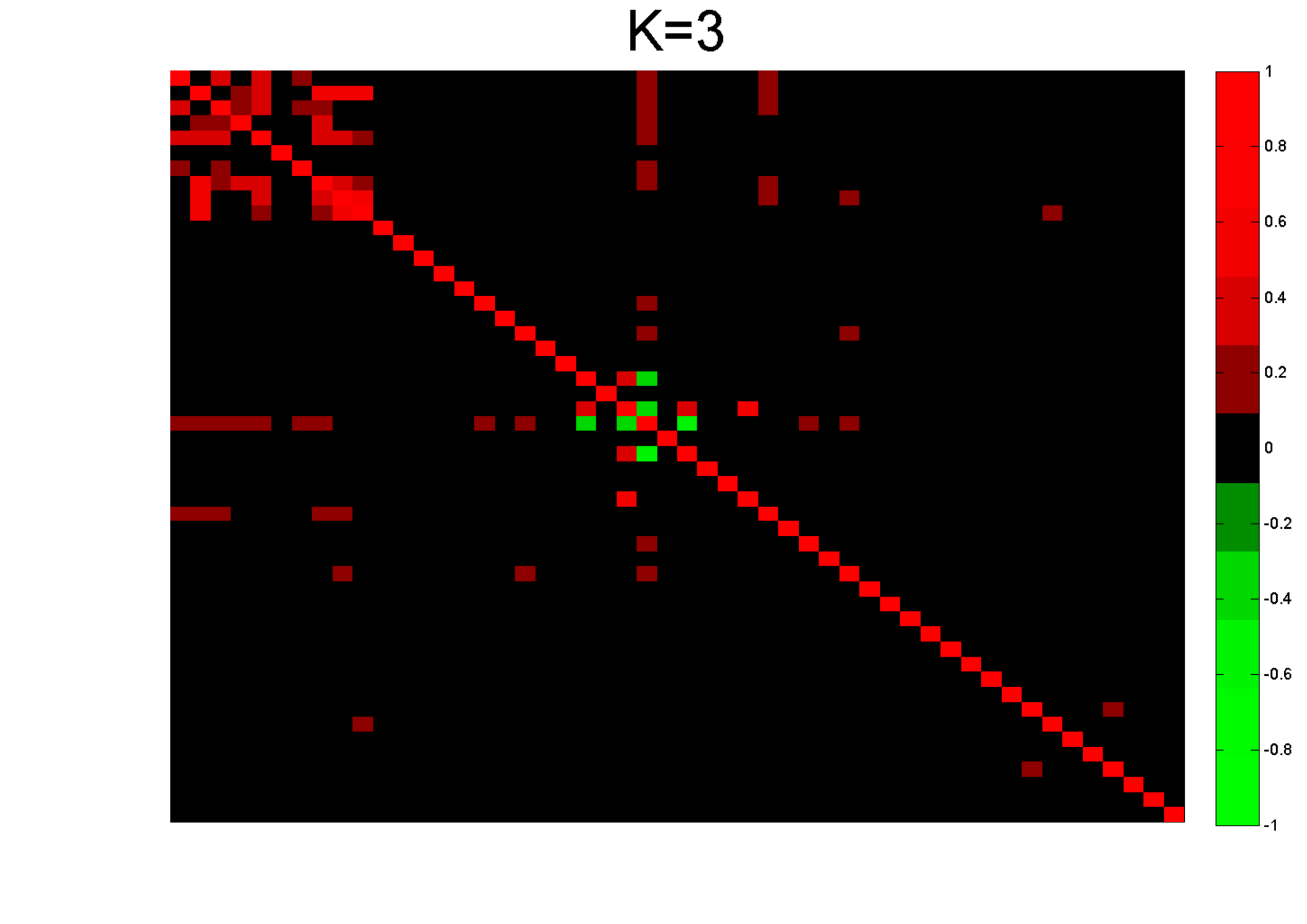}
\label{fig7}
\end{center}
\end{figure}

\subsection{Portfolio Allocation}

We extend our  data size by including larger industrial portfolios ($p=100$), and longer period (ten years): January $1^{st}$,2000 to December $31^{st}$, 2010 of annualized daily excess returns. Two portfolios are created at the beginning of each month, based on two different covariance estimates through approximate and strict factor models with unknown factors. At the end of each month, we compare the risks of both portfolios.  

 The number of factors is determined using the penalty function proposed by Bai and Ng (2002), as defined in (\ref{eq2.16add}). For calibration, we use the last $100$ consecutive business days of the above data, and both IC1 and IC2 give $\hat K=3$. On the $1^{st}$ of each month, we estimate $\hSig_{\diag}$ (SFM) and $\hSig_{\widehat{K}}$ (POET with soft thresholding) using the historical data of excess daily returns for the proceeding 12 months ($T=252$).
The value of the threshold is determined using the cross-validation procedure. We minimize the empirical risk of both portfolios to obtain the two respective optimal portfolio allocations $\hw=\hw_1$ and $\hw_2$ (based on $\hSig=\hSig_{\diag}$ and $\hSig_{\widehat{K}}$): $\hw=\arg\min_{\hw'\bone=1}\bw'\hSig\bw$. At the end of the month (21 trading days), their actual risks are compared, calculated by
$$R_i=\hw_i'\frac{1}{21}\sum_{t=1}^{21}\by_t\by_t'\hw_i\text{ , for $i=1,2.$}$$
We can see from Figure \ref{fig8} that the minimum-risk portfolio created by the POET estimator performs significantly better, achieving lower variance $76\%$ of the time. Amongst those months, the risk is decreased by $48.63\%$. On the other hand, during the months that POET produces a higher-risk portfolio, the risk is increased by only $17.66\%$.

Next, we demonstrate the impact of the choice of number of factors and threshold on the performance of POET.  If cross-validation seems computationally expensive, we can choose a common soft-threshold throughout the whole investment process. The average constant in the cross-validation was $0.53$, close to our suggested constant $0.5$ used for simulation. We also present the results based on various choices of constant $C=0.5$,$0.75$,$1$ and $1.25$, with soft threshold $C\sqrt{\hat{\theta}_{ij}}\omega_T$. The results are summarized in Table \ref{tab5}.  The performance of POET seems consistent across different choices of these parameters.

\begin{figure}[htbp]
\begin{center}
\caption{Risk of portfolios created with POET and SFM (strict factor model)}
\includegraphics[width=8cm]{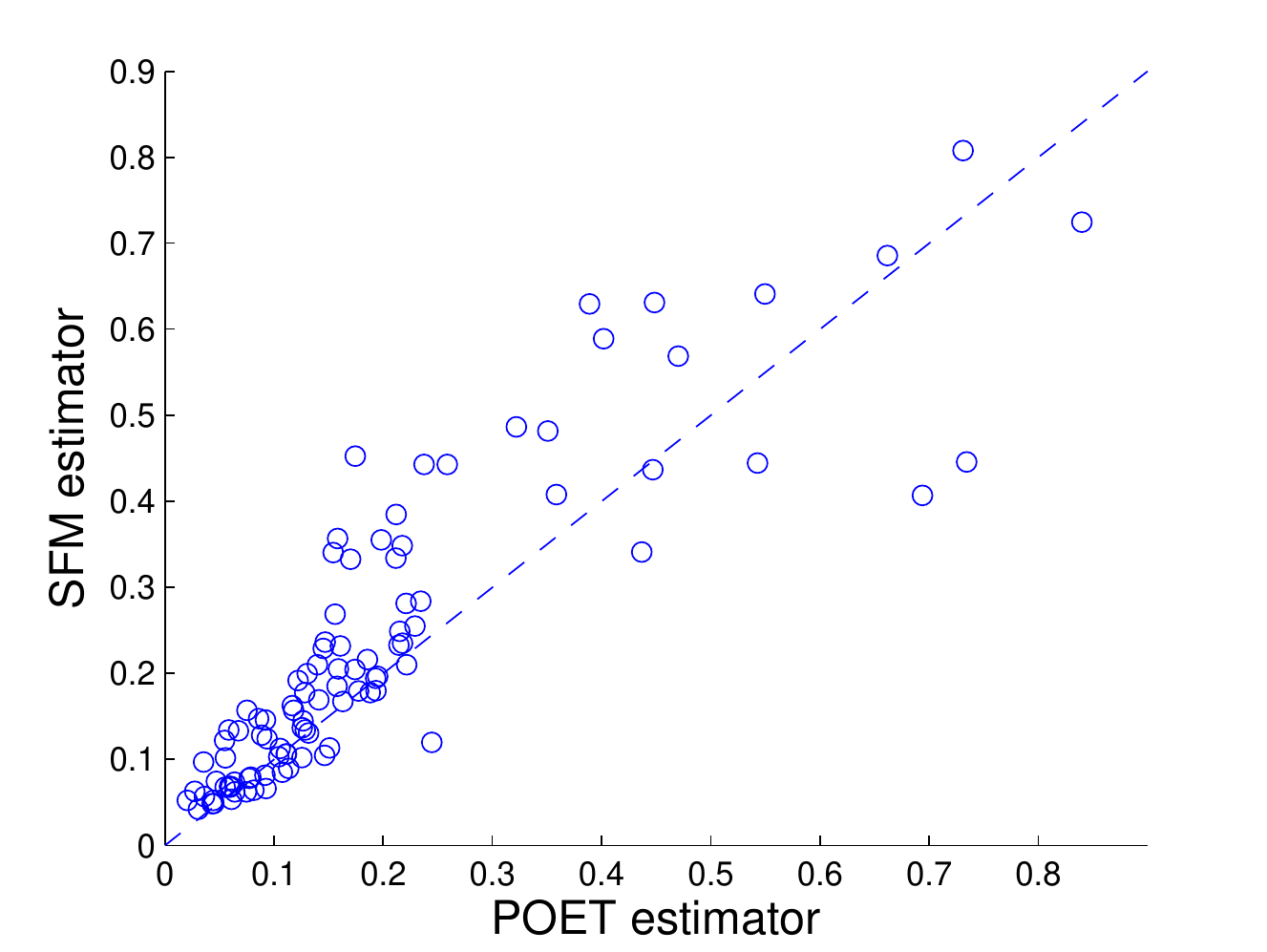}
\label{fig8}
\end{center}
\end{figure}

\begin{table}[htdp]

\begin{center}
\caption{Comparisons of the risks of portfolios using POET and SFM:  The first number is proportion of the time POET outperforms and the second number is percentage of average risk improvements. $C$ represents the constant in the threshold.}

\label{tab5}

\begin{tabular}{c|cccc}

\hline
 &  &  &  &  \\
 $C$&  $\hat K=1$ & $\hat K=2$ & $\hat K=3$ \\
\hline
 &  &  &  &  \\
$0.25$ & 0.58/29.6\% & 0.68/38\% & 0.71/33\%\\
 &  &  &   & \\
$0.5$ & 0.66/ 31.7\%& 0.70/ 38.2\%& 0.75/33.5\%\\
 &  &  &  &  \\
$0.75$ &  0.68/29.3\% & 0.70/29.6\% & 0.71/ 25.1\%\\
 &  &  &  &  \\
$1$ &  0.66/20.7\% & 0.62/19.4\% & 0.69/18\% \\

\hline

\end{tabular}
\end{center}
\end{table}

\section{Conclusion and Discussion}

We study the problem of estimating a high-dimensional covariance matrix with conditional sparsity. Realizing unconditional sparsity assumption is inappropriate in many applications, we introduce a  latent factor model that has a conditional sparsity feature, and propose the POET estimator to take advantage of the structure. This expands considerably the scope of the model based on the strict factor model, which assumes independent idiosyncratic noise and is too restrictive in practice. By assuming sparse error covariance matrix,  we allow for the presence of the cross-sectional correlation even after taking out the common factors. The sparse covariance is estimated by the adaptive thresholding technique.

It is found that the rates of convergence of the estimators have an extra term approximately $O_p(p^{-1/2})$ in addition to the results based on observable factors by Fan et al. (2008, 2011), which arises from the effect of estimating the unobservable factors. As we can see, this effect vanishes as the dimensionality increases, as more information about the common factors becomes available. When $p$ gets large enough, the effect of estimating the unknown factors is negligible, and we estimate the covariance matrices as if we knew the factors.

  The proposed POET also has wide applicability in statistical genomics.  For example, Carvalho  et al. (2008) applied a Bayesian sparse factor model  to study the breast cancer hormonal pathways. Their real-data results have identified  about  two common factors that have  highly loaded  genes (about half of 250 genes). As a result, these factors should be treated as ``pervasive" (see the explanation in Example 2.1),  which will result in one or two very spiked eigenvalues of the gene expressions' covariance matrix.  The POET can be applied to estimate such a covariance matrix and its network model.

\newpage

\begin{center}
{\Large \bf APPENDIX}
\end{center}

\appendix

\section{Estimating a sparse covariance with contaminated data}

We estimate $\Sig_u$ by applying the adaptive thresholding given by (\ref{eq2.13}). However, the task here is slightly different from the standard problem of estimating a sparse covariance matrix in the literature, as no direct observations for $\{\bu_t\}_{t=1}^T$ are available. In many cases the original  data are  contaminated, including any type of estimate of the data when direct
observations are not available. This typically happens when $\{\bu_t\}_{t=1}^T$ represent the error terms in regression models  or when data is subject to measurement of errors. Instead, we may observe $\{\hu_t\}_{t=1}^T$. For instance,  in the approximate factor models, $\hu_t=\by_t-\hb_i'\hf_t.$

We can estimate $\Sig_u$ using the adaptive thresholding proposed by Cai and Liu (2011):  for the threshold
$\tau_{ij}=C\sqrt{\hat{\theta}_{ij}}\omega_T,$
define
$$
 \hsig_{ij}=\frac{1}{T}\sum_{t=1}^T\hat{u}_{it}\hat{u}_{jt}, \quad \text{ and } \quad \hat{\theta}_{ij}=\frac{1}{T}\sum_{t=1}^T\left(\hat{u}_{it}\hat{u}_{jt}-\hsig_{ij}\right)^2.
$$
\begin{equation} \label{eq3.2add}
\hSig_{u}^{\mathcal{T}}=(s_{ij}(\hsig_{ij}))_{p\times p},
\end{equation}
where $s_{ij}(.)$ satisfies: for all $z\in\mathbb{R}$,
$
s_{ij}(z)=0, \text{ when }|z|\leq\tau_{ij};$ $|s_{ij}(z)-z|\leq \tau_{ij}.$

When $\{\hu_t\}_{t=1}^T$ is close enough to $\{\bu_t\}_{t=1}^T$, we can show that $\hSig_{u}^{\mathcal{T}}$ is also consistent. The following theorem extends the standard thresholding results in Bickel and Levina (2008) and Cai and Liu (2011) to the case when no direct observations are available, or the original data are contaminated.  For the tail and mixing parameters $r_1$ and $r_3$  defined in Assumptions \ref{a21} and \ref{a32}, let $\alpha=3r_1^{-1}+r_3^{-1}+1$.

\begin{thm} \label{tb1}
Suppose $(\log p)^{6\alpha}=o(T)$, and Assumptions \ref{a21} and \ref{a32} hold.  In addition, suppose  there is a sequence $a_T=o(1)$ so that
$\max_{i\leq p}\frac{1}{T}\sum_{t=1}^T|u_{it}-\hat{u}_{it}|^2=O_p(a_T^2),$ and $ \max_{i
\leq p, t\leq T}|u_{it}-\hat{u}_{it}|=o_p(1);$
Then there is a constant $C>0$ in the adaptive thresholding estimator (\ref{eq3.2add}) with
$$
\omega_T=\sqrt{\frac{\log p}{T}}+a_T
$$
such that
$$
    \|\hSig_u^{\mathcal{T}}-\Sig_u\|=O_p\left(\omega_T^{1-q}m_p \right).
$$
If further $\omega_Tm_p=o(1)$, then $\hSig_u^{\mathcal{T}}$ is invertible with probability approaching one, and
$$\|(\hSig_u^{\mathcal{T}})^{-1}-\Sig_u^{-1}\|=O_p\left(\omega_T^{1-q}m_p\right).$$
\end{thm}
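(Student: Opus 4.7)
The plan is to adapt the standard entrywise-then-thresholding argument of Bickel--Levina (2008) and Cai--Liu (2011) to contaminated residuals, controlling the perturbation by $a_T$.

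\textbf{Step 1: Uniform entrywise rate for $\hsig_{ij}$.} First I would establish
$$\max_{i,j\leq p}|\hsig_{ij}-\sigma_{u,ij}|=O_p(\omega_T).$$
Decompose $\hsig_{ij}-\sigma_{u,ij}=\Delta^{(1)}_{ij}+\Delta^{(2)}_{ij}$, where
$\Delta^{(1)}_{ij}=T^{-1}\sum_t u_{it}u_{jt}-\sigma_{u,ij}$ and
$\Delta^{(2)}_{ij}=T^{-1}\sum_t(\hat u_{it}\hat u_{jt}-u_{it}u_{jt})$.
For $\Delta^{(1)}_{ij}$, the Bernstein-type inequality for strong-mixing sequences with exponential tails (Assumptions \ref{a21}(iii) and \ref{a32}) together with a union bound over the $p^2$ pairs yields $\max_{i,j}|\Delta^{(1)}_{ij}|=O_p(\sqrt{\log p/T})$; this is where $(\log p)^{6\alpha}=o(T)$ is used. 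For $\Delta^{(2)}_{ij}$, write it as $T^{-1}\sum_t(\hat u_{it}-u_{it})\hat u_{jt}+T^{-1}\sum_t u_{it}(\hat u_{jt}-u_{jt})$ and apply Cauchy--Schwarz. The given bound $\max_i T^{-1}\sum_t(\hat u_{it}-u_{it})^2=O_p(a_T^2)$ controls one factor by $a_T$, while the other factor $T^{-1}\sum_t\hat u_{jt}^2\leq 2T^{-1}\sum_t u_{jt}^2+2T^{-1}\sum_t(\hat u_{jt}-u_{jt})^2=O_p(1)$ uniformly in $j$ (using $E u_{jt}^2\leq\|\Sig_u\|_1$ and a union bound over the first term). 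Combining gives $\max_{i,j}|\Delta^{(2)}_{ij}|=O_p(a_T)$, hence the claimed rate.

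\textbf{Step 2: Uniform control of $\hat{\theta}_{ij}$.} By the same decomposition and Assumption \ref{a21}(ii) which gives $\theta_{ij}:=\var(u_{it}u_{jt})$ bounded away from zero, I would show $\max_{i,j}|\hat\theta_{ij}-\theta_{ij}|=o_p(1)$, so that with probability tending to one, $\tau_{ij}=C\sqrt{\hat\theta_{ij}}\,\omega_T$ lies in a deterministic band $[c_1 C\omega_T,c_2 C\omega_T]$ with $c_1,c_2>0$. This reduces the adaptive threshold to effectively a universal threshold of order $\omega_T$, at the cost of choosing $C$ sufficiently large to dominate the constant in Step 1.

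\textbf{Step 3: Operator-norm bound via sparsity.} Using $\|\hSig_u^{\mathcal{T}}-\Sig_u\|\leq\|\hSig_u^{\mathcal{T}}-\Sig_u\|_1=\max_i\sum_j|s_{ij}(\hsig_{ij})-\sigma_{u,ij}|$, split the inner sum at $|\sigma_{u,ij}|\lessgtr\tau_{ij}/2$. On the event $\mathcal{E}=\{\max_{i,j}|\hsig_{ij}-\sigma_{u,ij}|\leq C'\omega_T\}$ (which has probability $\to 1$ by Step 1 with $C$ large enough relative to $C'$):
\begin{itemize}
\item For $|\sigma_{u,ij}|<\tau_{ij}/2$: either $|\hsig_{ij}|<\tau_{ij}$ so $s_{ij}(\hsig_{ij})=0$ and the contribution is $|\sigma_{u,ij}|\leq\tau_{ij}/2$, or $|\hsig_{ij}|\geq\tau_{ij}$ and then $|s_{ij}(\hsig_{ij})-\sigma_{u,ij}|\leq|s_{ij}(\hsig_{ij})-\hsig_{ij}|+|\hsig_{ij}-\sigma_{u,ij}|\lesssim\tau_{ij}$. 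In both subcases, $|\sigma_{u,ij}|^q/\tau_{ij}^q\geq(\tau_{ij}/2)^{-q}\cdot|\sigma_{u,ij}|^q$ lets me bound the sum by $\tau_{ij}^{1-q}\sum_j|\sigma_{u,ij}|^q\lesssim\omega_T^{1-q}m_p$.
\item For $|\sigma_{u,ij}|\geq\tau_{ij}/2$: the number of such $j$ is $\leq 2^q\tau_{ij}^{-q}\sum_j|\sigma_{u,ij}|^q\lesssim\omega_T^{-q}m_p$, and each term is $|s_{ij}(\hsig_{ij})-\sigma_{u,ij}|\leq|s_{ij}(\hsig_{ij})-\hsig_{ij}|+|\hsig_{ij}-\sigma_{u,ij}|\lesssim\omega_T$.
\end{itemize}
Both contributions are $O_p(m_p\omega_T^{1-q})$, yielding the first conclusion.

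\textbf{Step 4: Inverse.} When $m_p\omega_T^{1-q}=o(1)$, Weyl's inequality gives $\lambda_{\min}(\hSig_u^{\mathcal{T}})\geq\lambda_{\min}(\Sig_u)-\|\hSig_u^{\mathcal{T}}-\Sig_u\|\geq c_1/2$ w.p.$\to 1$, so $\hSig_u^{\mathcal{T}}$ is invertible. Then the standard identity $(\hSig_u^{\mathcal{T}})^{-1}-\Sig_u^{-1}=-(\hSig_u^{\mathcal{T}})^{-1}(\hSig_u^{\mathcal{T}}-\Sig_u)\Sig_u^{-1}$ gives $\|(\hSig_u^{\mathcal{T}})^{-1}-\Sig_u^{-1}\|\leq\|(\hSig_u^{\mathcal{T}})^{-1}\|\,\|\Sig_u^{-1}\|\,\|\hSig_u^{\mathcal{T}}-\Sig_u\|=O_p(m_p\omega_T^{1-q})$.

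\textbf{Main obstacle.} The delicate point is Step 1: the cross-term $\Delta^{(2)}_{ij}$ must be controlled \emph{uniformly} in $(i,j)$, while the Cauchy--Schwarz argument only uses the hypothesis $\max_i T^{-1}\sum_t(u_{it}-\hat u_{it})^2=O_p(a_T^2)$ along one coordinate; the other factor, $\max_j T^{-1}\sum_t \hat u_{jt}^2$, requires a separate uniform large-deviation bound on $T^{-1}\sum_t u_{jt}^2$ over $j\leq p$, which in turn consumes the tail/mixing assumptions. Ensuring that $\hat{\theta}_{ij}$ is uniformly bounded away from zero and above (so that the random threshold $\tau_{ij}$ can be pinched between deterministic multiples of $\omega_T$) is the second technical point; this relies crucially on the lower bound $\min_{i,j}\var(u_{it}u_{jt})>c_1$ in Assumption \ref{a21}(ii).
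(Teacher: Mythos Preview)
Your proposal is correct and follows essentially the same route as the paper. The paper's proof packages your Steps 1 and 2 by citing Lemmas A.3--A.4 of Fan, Liao and Mincheva (2011, \textit{Ann.\ Statist.}) to produce the high-probability events $A_1=\{\max_{i,j}|\hsig_{ij}-\sigma_{u,ij}|<M\omega_T\}$ and $A_2=\{\theta_2<\sqrt{\hat\theta_{ij}}<\theta_1\}$, which is exactly the content of your uniform entrywise bound and your pinching of $\tau_{ij}$ between deterministic multiples of $\omega_T$. In Step 3 the paper splits the sum on the indicator $\{|\hsig_{ij}|>C\omega_T\sqrt{\hat\theta_{ij}}\}$ (i.e., on whether thresholding is triggered) rather than on $\{|\sigma_{u,ij}|\gtrless\tau_{ij}/2\}$ as you do; on $A_1\cap A_2$ these two splits are equivalent up to constants, and both lead to the same $m_p\omega_T^{1-q}$ bound via the $\ell_q$-sparsity of the rows of $\Sig_u$. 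Step 4 matches the paper verbatim. One minor note: you (correctly) invoke $m_p\omega_T^{1-q}=o(1)$ for invertibility, which is what is actually needed and is the condition stated in Theorem~3.1, whereas the displayed hypothesis $\omega_T m_p=o(1)$ in Theorem~A.1 appears to be a slip.
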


\begin{proof} By Assumptions \ref{a21} and \ref{a32}, the conditions of Lemmas A.3 and A.4 of Fan, Liao and Mincheva (2011, \textit{Ann. Statist}, \textbf{39}, 3320-3356) are satisfied. Hence for any $\epsilon>0$, there are positive constants $M, \theta_1$ and $\theta_2$ such that each of the events
\begin{eqnarray*}
A_1&=&\{\max_{i\leq p, j\leq p}|\hsig_{ij}-\sigma_{u,ij}|<M\omega_T\}\cr
A_2&=&\{\theta_1>\sqrt{\hat{\theta}_{ij}}>\theta_2, \text{ all } i\leq p, j\leq p\}.
\end{eqnarray*}
occurs with probability at least $1-\epsilon$.
By the condition of threshold function, $s_{ij}(t)=s_{ij}(t)I_{|t|>C\omega_T\sqrt{\hat{\theta}_{ij}}}$. Now for $C=\theta_2^{-1}2M,$ under the event $A_1\cap A_2,$
 \begin{eqnarray*}
\|\hSig_u^{\mathcal{T}}-\Sig_u\|&\leq& \max_{i\leq p}\sum_{j=1}^p|s_{ij}(\hsig_{ij})-\sigma_{u,ij}|\cr
&=&        \max_{i\leq p}\sum_{j=1}^p|s_{ij}(\hsig_{ij})I_{(|\hsig_{ij}|>C\omega_T\sqrt{\hat{\theta}_{ij}})}-\sigma_{u,ij}I_{(|\hsig_{ij}|>C\omega_T\sqrt{\hat{\theta}_{ij}})}-\sigma_{u,ij}I_{(|\hsig_{ij}|\leq C\omega_T\sqrt{\hat{\theta}_{ij}})}|              \cr
&\leq&   \max_{i\leq p}\sum_{j=1}^p|s_{ij}(\hsig_{ij})-\hsig_{ij}|I_{(|\hsig_{ij}|>C\omega_T\sqrt{\hat{\theta}_{ij}})}+\sum_{j=1}^p|\hsig_{ij}-\sigma_{u,ij}|I_{(|\hsig_{ij}|>C\omega_T\sqrt{\hat{\theta}_{ij}})}\cr
&&+\sum_{j=1}^p|\sigma_{u,ij}|I_{(|\hsig_{ij}|\leq C\omega_T\sqrt{\hat{\theta}_{ij}})}\cr
&\leq& \max_{i\leq p}\sum_{j=1}^pC\omega_T\sqrt{\hat{\theta}_{ij}}I_{(|\hsig_{ij}|>C\omega_T\theta_2)}+M\omega_T\sum_{j=1}^pI_{(|\hsig_{ij}|>C\omega_T\theta_2)}+\sum_{j=1}^p|\sigma_{u,ij}|I_{(|\hsig_{ij}|\leq C\omega_T\theta_1)}\cr
&\leq& (C\theta_1+M)\omega_T\max_{i\leq p}\sum_{j=1}^pI_{(|\sigma_{u,ij}|>M\omega_T)}+\max_{i\leq p}\sum_{j=1}^p|\sigma_{u,ij}|I_{(|\sigma_{u,ij}|\leq C\omega_T\theta_1+M\omega_T)}\cr
&\leq& (C\theta_1+M)\omega_T\max_{i\leq p}\sum_{j=1}^p\frac{|\sigma_{u,ij}|^q}{M^q\omega_T^q}I_{(|\sigma_{u,ij}|>M\omega_T)}\cr
&&+\max_{i\leq p}\sum_{j=1}^p|\sigma_{u,ij}|\frac{(C\theta_1+M)^{1-q}\omega_T^{1-q}}{|\sigma_{u,ij}|^{1-q}}I_{(|\sigma_{u,ij}|\leq (C\theta_1+M)\omega_T)}\cr
&\leq& \frac{C\theta_1+M}{M^q}\omega_T^{1-q}\max_{i\leq p}\sum_{j=1}^p|\sigma_{u,ij}|^q+\max_{i\leq p}\sum_{j=1}^p|\sigma_{u,ij}|^q(C\theta_1+M)^{1-q}\omega_T^{1-q}\cr
&=&m_p\omega_T^{1-q}(C\theta_1+M)(M^{-q}+(C\theta_1+M)^{-q}).
\end{eqnarray*}
Let $M_1=(C\theta_1+M)(M^{-q}+(C\theta_1+M)^{-q}).$ Then with probability at least $1-2\epsilon$, $\|\hSig_u^{\mathcal{T}}-\Sig_u\|\leq m_p\omega_T^{1-q}M_1.$ Since $\epsilon$ is arbitrary, we have $\|\hSig_u^{\mathcal{T}}-\Sig_u\|=O_p(\omega_T^{1-q}m_p)$. If in addition,  $\omega_Tm_p=o(1)$, then the minimum eigenvalue of $\hSig_u^{\mathcal{T}}$ is bounded away from zero with probability approaching one since $\lambda_{\min}(\Sig_u)>c_1$. This then implies $\|(\hSig_u^{\mathcal{T}})^{-1}-\Sig_u^{-1}\|=O_p\left(\omega_T^{1-q}m_p\right).$

\end{proof}

\section{Proofs for Section 2}

We first  cite two useful theorems, which are needed to prove propositions  2.1 and 2.2. In Lemma \ref{lb.1} below,  let $\{\lambda_i\}_{i=1}^p$ be the eigenvalues of $\bSigma$ in descending order and $\{\bxi_i\}_{i=1}^p$ be their associated eigenvectors.  Correspondingly, let $\{\hlam_i\}_{i=1}^p$ be the eigenvalues of $\hSig$ in descending order and $\{\hxi_i\}_{i=1}^p$ be their associated eigenvectors.

\begin{lem}\label{lb.1}
\begin{enumerate}
\item {\bf (Weyl's Theorem)} $|\hlam_i - \lambda_i| \leq \|\hSig - \bSigma\|$.
\item {\bf ($\sin \theta$ Theorem, Davis and Kahan, 1970)}
$$
\|\hxi_i -\bxi_i \| \leq \frac{\sqrt{2} \| \hSig - \bSigma \|}{ \min( |\hlam_{i-1} - \lambda_i|,  |\lambda_i - \hlam_{i+1} | )}.
$$
\end{enumerate}
\end{lem}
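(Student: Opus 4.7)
The plan is to establish the two inequalities independently using classical perturbation arguments, treating the symmetric matrices $\Sig$ and $\hSig$ without any further structural input beyond what the lemma states.

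For Weyl's theorem, I would invoke the Courant--Fischer min--max characterization of the eigenvalues of a real symmetric matrix,
$$
\lambda_i(A) \;=\; \max_{\dim V = i}\; \min_{x \in V,\,\|x\|=1}\; x'Ax.
$$
Writing $\hSig = \Sig + E$ with $E := \hSig - \Sig$ symmetric, and noting that $\lambda_{\min}(E)\|x\|^2 \leq x'Ex \leq \lambda_{\max}(E)\|x\|^2$ for every $x$, the min--max identity applied to both $\Sig$ and $\hSig$ over the same candidate subspaces yields $\lambda_i(\Sig) + \lambda_{\min}(E) \leq \lambda_i(\hSig) \leq \lambda_i(\Sig) + \lambda_{\max}(E)$. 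Since $\|E\| = \max(|\lambda_{\min}(E)|, |\lambda_{\max}(E)|)$, the claimed bound $|\hlam_i - \lambda_i| \leq \|\hSig - \Sig\|$ follows.

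For the sin--theta inequality I would use the standard eigenvector expansion argument. After orienting $\hxi_i$ so that $d_i := \langle \xi_i, \hxi_i\rangle \geq 0$, expand $\xi_i = \sum_j d_j \hxi_j$ in the orthonormal eigenbasis of $\hSig$. Applying the perturbation $E$ to $\xi_i$ and using $\Sig \xi_i = \lambda_i \xi_i$ together with $\hSig \hxi_j = \hlam_j \hxi_j$ gives
$$
(\hSig - \Sig)\xi_i \;=\; \hSig\xi_i - \lambda_i \xi_i \;=\; \sum_j (\hlam_j - \lambda_i)\, d_j\, \hxi_j.
$$
By Parseval, $\|E\|^2 \geq \|E\xi_i\|^2 = \sum_j (\hlam_j - \lambda_i)^2 d_j^2 \geq \delta^2 \sum_{j\neq i} d_j^2$, where $\delta := \min_{j\neq i}|\hlam_j - \lambda_i|$. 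Combining this with the identity $\|\hxi_i - \xi_i\|^2 = 2(1-d_i) \leq 2(1-d_i^2) = 2\sum_{j\neq i} d_j^2$ and taking square roots gives $\|\hxi_i - \xi_i\| \leq \sqrt{2}\,\|E\|/\delta$.

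The one delicate step is identifying $\delta$ with $\min(|\hlam_{i-1} - \lambda_i|,\, |\lambda_i - \hlam_{i+1}|)$ as in the lemma's denominator. Since the $\hlam_j$ are monotonically decreasing, the minimum of $|\hlam_j - \lambda_i|$ over $j \neq i$ is attained at a nearest neighbor $i \pm 1$ provided $\lambda_i \in [\hlam_{i+1}, \hlam_{i-1}]$; this in turn is guaranteed by Weyl's part (1) whenever the spectral gap exceeds $\|E\|$. I expect this bookkeeping about the spectral gap to be the main obstacle to a fully tight statement, but in the regime where the lemma is applied (Propositions~\ref{prop21} and \ref{prop22}, where the first $K$ eigenvalues of $\Sig$ are of order $p$ while the rest are $o(p)$), the ordering holds automatically and the bound is sharp.
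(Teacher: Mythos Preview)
Your proposal is correct, and in fact goes well beyond what the paper does: the paper does not prove Lemma~\ref{lb.1} at all, but simply \emph{cites} Weyl's theorem and the Davis--Kahan $\sin\theta$ theorem as classical results (see the sentence preceding the lemma: ``We first cite two useful theorems\ldots''). Your Courant--Fischer argument for part~(1) and eigenvector-expansion argument for part~(2) are the standard textbook proofs, and you are right to flag that the identification $\min_{j\neq i}|\hlam_j-\lambda_i|=\min(|\hlam_{i-1}-\lambda_i|,|\lambda_i-\hlam_{i+1}|)$ requires the gap condition $\lambda_i\in[\hlam_{i+1},\hlam_{i-1}]$; as you note, in the paper's applications (Propositions~\ref{prop21} and~\ref{prop22}) the relevant gaps are of order $p$ while $\|E\|=O(\|\Sig_u\|)=o(p)$, so this ordering is automatic.
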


\textbf{Proof of Proposition 2.1}
\begin{proof}
Since $\{\lambda_j\}_{j=1}^p$ are the eigenvalue of $\Sig$ and $\{\|\tb_j\|^2\}_{j=1}^K$ are the first $K$ eigenvalues of $\bB\bB'$ (the remaining $p-K$ eigenvalues are zero), then by the Weyl's theorem, for each $j\leq K$,
$$
|\lambda_j-\|\tb_j\|^2|\leq\|\Sig-\bB\bB'\|=\|\Sig_u\|.
$$
For $j> K$, $|\lambda_j|=|\lambda_j-0|\leq \|\Sig_u\|.$ On the other hand, the first $K$ eigenvalues of $\bB\bB$ are also the eigenvalues  of $\bB'\bB$. By the assumption, the eigenvalues of $p^{-1}\bB'\bB$ are bounded away from zero. Thus when $j\leq K$, $\|\tb_j\|^2/p$ are bounded away from zero for all large $p.$
\end{proof}

\textbf{Proof of Proposition 2.2}
\begin{proof}
Applying the $\sin\theta$ theorem yields
$$
\|\xi_j-\tb_j/\|\tb_j\|\|\leq\frac{\sqrt{2}\|\Sig_u\|}{\min(|\lambda_{j-1}-\|\tb_j\|^2|, |\|\tb_j\|^2-\lambda_{j+1}|)}
$$
For a generic constant $c>0$, $|\lambda_{j-1}-\|\tb_j\|^2|\geq |\|\tb_{j-1}\|^2-\|\tb_j\|^2|-|\lambda_{j-1}-\|\tb_{j-1}\|^2|\geq cp$ for all large $p$, since $|\|\tb_{j-1}\|^2-\|\tb_j\|^2|\geq cp$ but $|\lambda_{j-1}-\|\tb_{j-1}\|^2|$ is bounded  by Prosposition 2.1. On the other hand,  if $j<K$, the same argument implies
$ |\|\tb_j\|^2-\lambda_{j+1}|\geq cp$. If $j=K$, $|\|\tb_j\|^2-\lambda_{j+1}|=p|\|\tb_K\|^2/p-\lambda_{K+1}/p|$, where $\|\tb_K\|^2/p$ is bounded away from zero, but $\lambda_{K+1}/p=O(p^{-1})$. Hence again, $|\|\tb_j\|^2-\lambda_{j+1}|\geq cp.$

 \end{proof}

\textbf{Proof of Theorem \ref{thm2.1}}

\begin{proof} The sample covariance matrix of the residuals using least squares method is given by $\hSig_u\frac{1}{T}(\bY-\hLam\hF')(\bY'-\hF\hLam')=\frac{1}{T}\bY\bY'-\hLam \hLam'.$
where we used the normalization condition $\hF'\hF=T\bI_K$ and $\hLam=\bY\hF/T.$ If we show that $\hLam\hLam'=\sum_{i=1}^K\hlam_i\hxi_i\hxi_i'$, then from the decompositions of the sample covariance
\begin{equation*}
\frac{1}{T}\bY\bY' = \hLam \hLam'+\hSig_u =\sum_{i=1}^K\hlam_i\hxi_i\hxi_i'+\hR,
\end{equation*}
we have $\hR=\hSig_u$.
Consequently, applying thresholding on $\hSig_u$ is equivalent to applying thresholding on $\hR$, which gives the desired result.

We now show $\hLam \hLam'=\sum_{i=1}^K\hlam_i\hxi_i\hxi_i'$ indeed holds.
Consider again the least squares problem (\ref{eq2.10}) but with the following alternative normalization constraints:
$
\frac{1}{p}\sum_{i=1}^p\bb_i\bb_i'=\bI_K, $ and $\frac{1}{T}\sum_{t=1}^T\bff_t\bff_t'$ is diagonal.
Let $(\widetilde{\bLambda}, \widetilde{\bF})$ be the solution to the new optimization problem. Switching the roles of $\bB$ and $\bF$, then the solution of (\ref{eq2.12}) is $\widetilde{\bLambda} = (\hxi_1, \cdots, \hxi_K)$ and $\widetilde{\bF}=p^{-1}\bY'\widetilde{\bLambda}$. In addition,
$
    T^{-1}\widetilde{\bF}'\widetilde{\bF} = \diag(\widehat \lambda_1, \cdots, \widehat \lambda_K).
$
From $\hLam \hF' = \widetilde{\bLambda} \widetilde{\bF}'$, it follows that
$
\hLam \hLam'=\frac{1}{T}\hLam \hF'\hF\hLam' =\frac{1}{T}\widetilde{\bLambda}\widetilde{\bF}'\widetilde{\bF}\widetilde{\bLambda}'=\sum_{i=1}^K\hlam_i\hxi_i\hxi_i'.
$
\end{proof}

\section{Proofs for Section 3}

We will proceed by subsequently showing Theorems \ref{thm33}, \ref{thm31} and \ref{thm32}.

\subsection{Preliminary lemmas}
The following results are to be used subsequently. The proofs of Lemmas \ref{la1},\ref{la2} and \ref{lb4} are found in Fan, Liao and Mincheva (2011).

\begin{lem} \label{la1} Suppose $\bA, \bB$ are symmetric semi-positive definite matrices, and $\lambda_{\min}(\bB)>c_T$ for a sequence $c_T>0.$ If $\|\bA-\bB\|=o_p(c_T)$, then $\lambda_{\min}(\bA)>c_T/2$, and
$$
\|\bA^{-1}-\bB^{-1}\|=O_p(c_T^{-2})\|\bA-\bB\|.
$$
\end{lem}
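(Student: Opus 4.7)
The plan is to prove the two claims in sequence, both via standard matrix perturbation tools already invoked elsewhere in the paper.

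First, for the eigenvalue bound, I would apply Weyl's inequality (stated in Lemma \ref{lb.1}) to the symmetric matrices $\bA$ and $\bB$. This yields $|\lambda_{\min}(\bA) - \lambda_{\min}(\bB)| \leq \|\bA - \bB\|$. Since $\|\bA-\bB\| = o_p(c_T)$ and $\lambda_{\min}(\bB) > c_T$, we get $\lambda_{\min}(\bA) \geq c_T - o_p(c_T)$, which exceeds $c_T/2$ on an event of probability tending to one.

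Second, for the norm bound on the inverse difference, I would use the standard resolvent-type identity
$$
\bA^{-1} - \bB^{-1} = \bA^{-1}(\bB - \bA)\bB^{-1},
$$
which requires both inverses to exist; existence follows from the first part (on the high-probability event just established) together with the hypothesis $\lambda_{\min}(\bB) > c_T > 0$. Taking spectral norms and using sub-multiplicativity,
$$
\|\bA^{-1} - \bB^{-1}\| \leq \|\bA^{-1}\| \, \|\bA - \bB\| \, \|\bB^{-1}\|.
$$
Because $\bA$ and $\bB$ are symmetric positive definite on this event, $\|\bB^{-1}\| = \lambda_{\min}(\bB)^{-1} < c_T^{-1}$ and $\|\bA^{-1}\| = \lambda_{\min}(\bA)^{-1} < 2 c_T^{-1}$. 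Combining these gives $\|\bA^{-1} - \bB^{-1}\| \leq 2 c_T^{-2} \|\bA - \bB\|$ with probability approaching one, which is exactly $O_p(c_T^{-2})\|\bA - \bB\|$.

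There is no genuine obstacle here; the only subtlety is keeping track of the ``with probability approaching one'' qualifier so that the $O_p$ statement is clean. Concretely, I would introduce the event $E_T = \{\lambda_{\min}(\bA) > c_T/2\}$, verify $P(E_T) \to 1$ via Weyl, and then derive the inverse bound on $E_T$, which is precisely what the $O_p$ notation accommodates.
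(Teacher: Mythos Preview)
Your proof is correct and is precisely the standard argument: Weyl for the eigenvalue lower bound, then the resolvent identity $\bA^{-1}-\bB^{-1}=\bA^{-1}(\bB-\bA)\bB^{-1}$ combined with $\|\bA^{-1}\|<2c_T^{-1}$ and $\|\bB^{-1}\|<c_T^{-1}$. The paper itself does not supply a proof here but defers to Fan, Liao and Mincheva (2011, \textit{Ann.\ Statist.}); the argument there is exactly the one you have written.
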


\begin{lem} \label{la2} Suppose that the random variables $Z_1, Z_2$ both satisfy the exponential-type tail condition: There exist $r_1$, $r_2\in(0,1)$ and $b_1, b_2>0$, such that $\forall s>0$,
$$P(|Z_i|>s)\leq \exp(-(s/b_i)^{r_i}), \hspace{1em} i=1,2.$$
Then for some $r_3$ and $b_3>0$, and any $s>0$,
\begin{equation} \label{ea0}
P(|Z_1Z_2|>s)\leq \exp(1-(s/b_3)^{r_3}).
\end{equation}
\end{lem}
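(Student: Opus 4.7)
The plan is to reduce the tail bound on the product $|Z_1 Z_2|$ to the marginal tail bounds on $Z_1$ and $Z_2$ via a union bound. The key observation is the set inclusion
$$
\{|Z_1 Z_2| > s\} \subset \{|Z_1| > \sqrt{s}\} \cup \{|Z_2| > \sqrt{s}\},
$$
since on the complement both factors have modulus at most $\sqrt{s}$, forcing the product to be at most $s$. Applying the two hypothesized tails at level $\sqrt{s}$ then yields
$$
P(|Z_1 Z_2| > s) \leq \exp\bigl(-(\sqrt{s}/b_1)^{r_1}\bigr) + \exp\bigl(-(\sqrt{s}/b_2)^{r_2}\bigr).
$$

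Next I would combine these two exponentials into a single bound of the required form. Set $r = \min(r_1, r_2)$ and $b = \max(b_1, b_2)$, and restrict first to $s \geq b^2$, so that $\sqrt{s}/b \geq 1$. On this range, $\sqrt{s}/b_i \geq \sqrt{s}/b \geq 1$, so monotonicity in the base followed by monotonicity in the exponent gives $(\sqrt{s}/b_i)^{r_i} \geq (\sqrt{s}/b)^{r} = (s/b^2)^{r/2}$ for $i = 1, 2$, and hence the sum is bounded by $2 \exp\bigl(-(s/b^2)^{r/2}\bigr)$. Taking $r_3 = r/2$ and $b_3 = b^2$, and absorbing the factor $2$ via $2 \leq e$, the bound becomes $\exp\bigl(1 - (s/b_3)^{r_3}\bigr)$, which is the desired form.

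To finish, I would handle small $s$ separately. For $s < b_3$, the right-hand side $\exp(1 - (s/b_3)^{r_3})$ exceeds $1$, while the left-hand side is a probability, so the inequality is automatic on that range. No substantive obstacle is expected; this is routine sub-exponential bookkeeping. The only mild subtlety is that I must ensure the base $\sqrt{s}/b_i$ is at least $1$ before using $r \leq r_i$ to dominate the rates from below, which is exactly what the split at $s = b^2$ accomplishes and what creates the slack absorbed by the ``$+1$'' on the right-hand side.
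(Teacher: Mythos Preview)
Your proof is correct. The set inclusion $\{|Z_1 Z_2|>s\}\subset\{|Z_1|>\sqrt{s}\}\cup\{|Z_2|>\sqrt{s}\}$ is valid, the union bound and the monotonicity step (splitting at $s=b^2$ so that the common base $\sqrt{s}/b\geq 1$ before replacing $r_i$ by $r=\min(r_1,r_2)$) are handled carefully, and the small-$s$ case is disposed of by the trivial bound $P\leq 1<\exp(1-(s/b_3)^{r_3})$ for $s<b_3$.

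The paper itself does not give a proof here; it simply refers to Fan, Liao and Mincheva (2011, \textit{Ann.\ Statist.}). The argument there uses an asymmetric split $\{|Z_1|>s^{\alpha}\}\cup\{|Z_2|>s^{1-\alpha}\}$ with $\alpha$ chosen to balance the two exponents, which yields the sharper rate $r_3=r_1 r_2/(r_1+r_2)$ rather than your $r_3=\min(r_1,r_2)/2$. Since the lemma only asserts existence of \emph{some} $r_3,b_3>0$, your simpler symmetric split with $\alpha=1/2$ is entirely sufficient, and your argument is slightly cleaner for that purpose.
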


\begin{lem}\label{lb4} Under the assumptions of Theorem \ref{thm31},\\
(i) $\max_{i,j\leq K}|\frac{1}{T}\sum_{t=1}^Tf_{it}f_{jt}-Ef_{it}f_{jt}|=O_p(\sqrt{1/T})$.\\
(ii) $\max_{i,j\leq p}|\frac{1}{T}\sum_{t=1}^Tu_{it}u_{jt}-Eu_{it}u_{jt}|=O_p(\sqrt{(\log p)/T})$\\
(iii) $\max_{i\leq K, j\leq p}|\frac{1}{T}\sum_{t=1}^Tf_{it}u_{jt}|=O_p(\sqrt{(\log p)/T})$
\end{lem}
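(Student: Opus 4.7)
The plan is to reduce each of the three statements to a uniform concentration bound on sums of strongly mixing, sub-exponential random variables. In each of the three cases, define a centered summand: let $Z_{ij,t}^{(1)} = f_{it}f_{jt} - Ef_{it}f_{jt}$ for part (i), $Z_{ij,t}^{(2)} = u_{it}u_{jt} - Eu_{it}u_{jt}$ for part (ii), and $Z_{ij,t}^{(3)} = f_{it}u_{jt}$ (already mean-zero by Assumption~\ref{a21}(i)) for part (iii). I would first verify that each $Z_{ij,t}$ has an exponential-type tail, which follows directly from Lemma~\ref{la2} applied to the product of two variables each having sub-exponential tails per Assumption~\ref{a21}(iii). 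The mixing condition is inherited from Assumption~\ref{a32} since $Z_{ij,t}$ is a measurable function of $(\bu_t, \bff_t)$ only.

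Next I would invoke a Bernstein-type exponential inequality for strongly mixing processes with sub-exponential marginals (the Merlev\`ede--Peligrad--Rio inequality, or the version used in Fan, Liao, and Mincheva 2011 under the exponent condition in Assumption~\ref{a32}). This gives, for each fixed $(i,j)$ and all moderate $s > 0$,
$$
P\!\left(\Big|\tfrac{1}{T}\sum_{t=1}^T Z_{ij,t}\Big| > s\right) \le \exp\!\big(-c_1 T s^{2}\big) + T\exp\!\big(-c_2 (Ts)^{\gamma'}\big),
$$
for positive constants $c_1, c_2$ and an exponent $\gamma'$ determined by the tail indices $r_1, r_2$ and the mixing index $r_3$. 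The two terms arise, respectively, from a Gaussian-type regime on moderate deviations and from a heavy-tail correction capturing the truncation of the sub-exponential summands.

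For part (i), only $K^2$ pairs are involved and $K$ is fixed, so setting $s = M/\sqrt{T}$ and letting $M \to \infty$ immediately yields the $O_p(T^{-1/2})$ conclusion without any union bound. For parts (ii) and (iii), there are $p^2$ and $Kp$ pairs respectively. Choosing $s = M\sqrt{(\log p)/T}$, the Gaussian-type term becomes $\exp(-c_1 M^2 \log p) = p^{-c_1 M^2}$; under the standing condition $\log p = o(T^{\gamma/6})$ carried into the lemma from Theorem~\ref{thm31}, the heavy-tail term $T\exp(-c_2 (Ts)^{\gamma'})$ decays faster than any polynomial in $p$. A union bound over the $O(p^2)$ pairs, together with $M$ taken sufficiently large, then delivers the uniform rate $O_p(\sqrt{(\log p)/T})$.

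The main obstacle will be to justify applying the exponential inequality with the precise sub-exponential and mixing rates permitted by Assumptions~\ref{a21} and \ref{a32}, specifically so that the heavy-tail correction term remains negligible after being multiplied by the number of pairs. The exponent restriction $3r_1^{-1} + 1.5 r_2^{-1} + r_3^{-1} > 1$ in Assumption~\ref{a32}, together with the growth restriction $\log p = o(T^{\gamma/6})$, is precisely what guarantees that the correction term is swallowed by the Gaussian-regime bound after the union bound. Once this technical bookkeeping is in place, the three parts follow uniformly from the same template with only the choice of summand and the number of pairs differing.
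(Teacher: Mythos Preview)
Your proposal is correct and follows the same route as the paper: the paper does not prove Lemma~\ref{lb4} in the appendix but refers to Fan, Liao and Mincheva (2011, \textit{Ann.\ Statist.}), where the argument is exactly the one you outline --- use Lemma~\ref{la2} to get exponential tails for the product summands, inherit the $\alpha$-mixing, apply a Bernstein-type inequality for strongly mixing sequences (Merlev\`ede--Peligrad--Rio), and union-bound over the relevant index pairs, with the exponent bookkeeping handled by the condition $\log p = o(T^{\gamma/6})$.
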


\begin{lem}\label{lb.5}
Let $\hlam_K$ denote the $K$th largest eigenvalue of $\hSig_{\sam}=\frac{1}{T}\sum_{t=1}^T\by_t\by_t'$, then $\hlam_K>C_1p$ with probability approaching one for some $C_1>0.$
\end{lem}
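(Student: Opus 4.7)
The plan is to decompose the sample covariance in a way that isolates a deterministic ``signal'' piece of order $p$, and then show that all the remaining pieces are of strictly smaller order in operator norm so that Weyl's inequality delivers the conclusion. Specifically, write
\begin{equation*}
\hSig_{\sam} \;=\; \bB\,\hS_f\,\bB' \;+\; \bB\,\hS_{fu} \;+\; \hS_{fu}'\,\bB' \;+\; \hS_u,
\end{equation*}
where $\hS_f=T^{-1}\sum_t \bff_t\bff_t'$, $\hS_{fu}=T^{-1}\sum_t \bff_t\bu_t'$, and $\hS_u=T^{-1}\sum_t \bu_t\bu_t'$. By Weyl's theorem (Lemma \ref{lb.1}),
\begin{equation*}
\hlam_K \;\geq\; \lambda_K(\bB\,\hS_f\,\bB') \;-\; \|\bB\,\hS_{fu} + \hS_{fu}'\,\bB' + \hS_u\|,
\end{equation*}
so it suffices to prove (a) $\lambda_K(\bB\,\hS_f\,\bB') \geq c_1 p$ w.p.a.\ 1 for some $c_1>0$, and (b) the remainder has operator norm $o_p(p)$.

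For (a), the nonzero eigenvalues of $\bB\hS_f\bB'$ coincide with those of $\hS_f^{1/2}\bB'\bB\,\hS_f^{1/2}$, so
\begin{equation*}
\lambda_K(\bB\hS_f\bB') \;\geq\; \lambda_{\min}(\hS_f)\,\lambda_K(\bB'\bB).
\end{equation*}
Assumption \ref{a35} gives $\lambda_K(\bB'\bB) = \lambda_{\min}(\bB'\bB) \geq cp$. Under the canonical identification (\ref{eq2.7}) we have $\cov(\bff_t)=\bI_K$, and Lemma \ref{lb4}(i) yields $\|\hS_f-\bI_K\|=O_p(T^{-1/2})$, hence $\lambda_{\min}(\hS_f)\geq 1/2$ w.p.a.\ 1. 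Combining these gives $\lambda_K(\bB\hS_f\bB')\geq (c/2)p$ w.p.a.\ 1.

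For (b), I would bound the three remainder terms separately. First, $\|\bB\|=\lambda_{\max}^{1/2}(\bB'\bB)=O(\sqrt{p})$ by Assumption \ref{a35}. Since $\hS_{fu}$ is $K\times p$, $\|\hS_{fu}\|\leq\|\hS_{fu}\|_F\leq \sqrt{Kp}\max_{j\leq K,i\leq p}|(\hS_{fu})_{ji}| = O_p(\sqrt{p\log p/T})$ by Lemma \ref{lb4}(iii). Therefore
\begin{equation*}
\|\bB\,\hS_{fu}\| \;\leq\; \|\bB\|\,\|\hS_{fu}\| \;=\; O_p\!\bigl(p\sqrt{\log p/T}\bigr) \;=\; o_p(p).
\end{equation*}
Second, by Assumption \ref{a21}(ii) and Lemma \ref{lb4}(ii),
\begin{equation*}
\|\hS_u\| \;\leq\; \|\hS_u\|_1 \;\leq\; \|\Sig_u\|_1 + p\,\|\hS_u-\Sig_u\|_{\max} \;=\; O(1) + O_p\!\bigl(p\sqrt{\log p/T}\bigr) \;=\; o_p(p),
\end{equation*}
since $\log p = o(T^{\gamma/6})$ in the governing hypotheses. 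Putting everything together yields $\hlam_K \geq (c/4)p$ w.p.a.\ 1, establishing the claim with $C_1 = c/4$.

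The main obstacle is controlling $\|\hS_u\|$ in step (b): a direct operator-norm bound on the sample covariance of a high-dimensional vector is generally not small, but here the sparsity-induced bound $\|\Sig_u\|_1\leq c_2$ together with the elementwise rate $\|\hS_u-\Sig_u\|_{\max}=O_p(\sqrt{\log p/T})$ suffices after passing through the $L_1$-norm, which is the price we pay for using a loose matrix-norm inequality, but it is enough to get $o_p(p)$, which is all that's needed.
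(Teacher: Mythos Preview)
Your proof is correct and follows essentially the same line as the paper's: apply Weyl's inequality to separate a rank-$K$ signal of order $p$ from a remainder of order $o_p(p)$, bounding the latter via Lemma~\ref{lb4} and the crude inequality $\|\bA\|\leq p\|\bA\|_{\max}$.

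The only organizational difference is the choice of anchor matrix. The paper compares $\hSig_{\sam}$ to the population $\Sig=\bB\bB'+\Sig_u$, so it must first invoke Proposition~\ref{prop21} to get $\lambda_K(\Sig)\geq cp/2$, and its remainder $\hSig_{\sam}-\Sig$ contains the extra term $\bD_1=\bB(\hS_f-\bI_K)\bB'$ as well as $\bD_2=\hS_u-\Sig_u$ in place of your $\hS_u$. You instead anchor at $\bB\hS_f\bB'$ and lower-bound its $K$th eigenvalue directly via $\lambda_{\min}(\hS_f)\lambda_{\min}(\bB'\bB)$, which is slightly more self-contained (no appeal to Proposition~\ref{prop21}), at the cost of having to control $\|\hS_u\|$ rather than $\|\hS_u-\Sig_u\|$; your use of $\|\Sig_u\|_1\leq c_2$ from Assumption~\ref{a21}(ii) handles that cleanly. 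Either way the arithmetic and the key inputs are the same.
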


\begin{proof}

First of all, by Proposition~\ref{prop21}, under Assumption \ref{a35},
the $K{th}$ largest eigenvalue $\lambda_K$ of $\Sig$ satisfies: for some $c>0,$
$$
\lambda_K\geq\|\tb_K\|^2-|\lambda_K-\|\tb_K\|^2|\geq cp\|\Sig_u\|\geq cp /2
$$
 for sufficiently large $p$.  Using Weyl's theorem, we need only to prove that $\| \hSig_{\sam} - \Sig \| = o_p(p)$.  Without loss of generality, we prove the result under the identifiability condition (\ref{eq2.7}).  Using model (\ref{eq1.2}),
$
   \hSig_{\sam} = T^{-1} \sum_{t=1}^T (\bB \bff_t + \bu_t) (\bB \bff_t + \bu_t)'.
$
Using this and (\ref{eq1.3}), $\hSig_{\sam} - \Sig$ can be decomposed as the sum of the four terms:
\begin{eqnarray*}
  \bD_1  & = &  (T^{-1} \bB \sum_{t=1}^T \bff_t \bff_t' - \bI_K) \bB', \qquad \bD_2   =  T^{-1} \sum_{t=1}^T (\bu_t \bu_t' - \Sig_u),\\
  \bD_3  & = & \bB T^{-1} \sum_{t=1}^T \bff_t \bu_t', \qquad \bD_4 = \bD_3'
\end{eqnarray*}
We now deal them term by term.  We will repeatedly use the fact that for a $p\times p$ matrix $\bA$,
$$
    \| \bA \| \leq p  \| \bA \|_{\max}.
$$
First of all, by Lemma  \ref{lb4}, $
  \|T^{-1}\sum_{t=1}^T \bff_t \bff_t' - \bI_K\|  \leq K \|T^{-1}\sum_{t=1}^T \bff_t \bff_t' - \bI_K\|_{\max} = O_p(\sqrt{1/T}),$
which is $o_p(p)$ if $K\log p=o(T)$.
Consequently, by Assumption \ref{a35}, we have
$$
\|\bD_1\| \leq O_p(K\sqrt{(\log K)/T}) \|\bB \bB'\| = O_p( p\sqrt{1/T}).
$$
We now deal with $\bD_2$.  It follows from Lemma  \ref{lb4} that
$$
   \|\bD_2\| \leq p \| T^{-1} \sum_{t=1}^T (\bu_t \bu_t' - \Sig_u)\|_{\max} = O_p(p\sqrt{(\log p)/T}).
$$
Since $\|\bD_4\| = \|\bD_3\|$, it remains to deal with $\bD_3$, which is bounded by
$$
     \|\bD_3\|  \leq \| T^{-1} \sum_{t=1}^T   \bff_t \bu_t'\|\|\bB\|=O_p(p\sqrt{(\log p)/T}),
$$
which is $o_p(p)$ since $\log p=o(T)$.

\end{proof}

\begin{lem} \label{lc5add}Under Assumption \ref{a32},
$
\max_{t\leq T}\sum_{s=1}^T|E\bu_s'\bu_t|/p=O(1).
$
\end{lem}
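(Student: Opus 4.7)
The plan is to reduce the lemma to a one-dimensional covariance bound for each coordinate sequence $\{u_{it}\}_t$ via the expansion $E\bu_s'\bu_t = \sum_{i=1}^p E u_{is}u_{it}$, and then apply a standard covariance inequality for $\alpha$-mixing sequences together with the exponential-tail hypothesis in Assumption \ref{a21}(iii).

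More concretely, I would proceed in three steps. First, fix $t\le T$ and write
$$
\sum_{s=1}^T |E\bu_s'\bu_t|/p \;\le\; \frac{1}{p}\sum_{i=1}^p\sum_{s=1}^T |E u_{is}u_{it}|,
$$
so it suffices to prove a bound of the form $\sum_{s=1}^T |E u_{is}u_{it}|\le C$, uniform in $i\le p$ and $t\le T$. Second, since $Eu_{it}=0$ by Assumption \ref{a21}(i) and $u_{is}\in\mathcal{F}_{-\infty}^{s}$, $u_{it}\in\mathcal{F}_{t}^{\infty}$ when $s\le t$ (and symmetrically otherwise), Davydov's covariance inequality gives, for any $\nu>2$,
$$
|E u_{is}u_{it}| \;\le\; 12\,\alpha(|t-s|)^{1-2/\nu}\,\|u_{is}\|_{\nu}\,\|u_{it}\|_{\nu}.
$$
The exponential tail assumption $P(|u_{it}|>s)\le \exp(-(s/b_1)^{r_1})$ implies $\sup_{i,t}\|u_{it}\|_{\nu}<\infty$ for every $\nu\ge1$, so this bound reduces to $|Eu_{is}u_{it}|\le C\,\alpha(|t-s|)^{1-2/\nu}$ uniformly in $i,t$. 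Third, choose $\nu$ large enough (any $\nu>2$ suffices) and use Assumption \ref{a32}, $\alpha(h)\le\exp(-Ch^{r_3})$, to get
$$
\sum_{s=1}^T |E u_{is}u_{it}| \;\le\; |Eu_{it}^2| + 2C\sum_{h=1}^{\infty}\exp\!\bigl(-C(1-2/\nu)h^{r_3}\bigr),
$$
which is finite and independent of $i,t$. The diagonal term $Eu_{it}^2=\sigma_{u,ii}$ is bounded uniformly in $i$ because $\sigma_{u,ii}\le \|\Sig_u\|_1\le c_2$ by Assumption \ref{a21}(ii). Combining with the first display yields $\max_{t\le T}\sum_{s=1}^T |E\bu_s'\bu_t|/p \le \frac{1}{p}\cdot p\cdot C = O(1)$.

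The only mildly delicate point is obtaining a bound uniform in the coordinate index $i$, which would fail without the exponential-tail moment bound; beyond that, the argument is a direct application of Davydov's inequality and the summability of $\alpha(h)^{1-2/\nu}$ under exponential mixing.
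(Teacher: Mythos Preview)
Your proof is correct and follows essentially the same approach as the paper: both reduce to bounding $\sum_s |Eu_{is}u_{it}|$ uniformly in $i$ via Davydov's covariance inequality for $\alpha$-mixing sequences, using the exponential-tail assumption for uniform moment control and the exponential mixing rate for summability. The only cosmetic difference is that the paper first invokes stationarity to reduce to $t=1$ and takes $\nu=4$ (yielding the $\sqrt{\alpha(t)}$ exponent), whereas you work with general $t$ and arbitrary $\nu>2$.
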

\begin{proof} Since $\{\bu_t\}_{t=1}^T$ is weakly stationary, $\max_{t\leq T}\sum_{s=1}^T|E\bu_s'\bu_t|/p\leq2\sum_{t=1}^{\infty}|E\bu_1'\bu_t|/p.$ In addition, $E|u_{it}|^4<M$ for some constant $M$ and any $i, t$ since $u_{it}$ has exponential tail. Hence by Davydov's inequality (Corollary 16.2.4 in Athreya and Lahiri 2006), there is   a constant $C>0$, for  all $i\leq p, t\leq T$,  $|Eu_{i1}u_{it}|\leq C\sqrt{\alpha(t)}$, where $\alpha(t)$ is the $\alpha$-mixing coefficient.  By Assumption \ref{a32},
$\sum_{t=1}^{\infty}\sqrt{\alpha(t)}<\infty.$ Thus uniformly in $T$,
\begin{eqnarray*}
 \max_{t\leq T}\sum_{s=1}^T|E\bu_s'\bu_t|/p&\leq&2\sum_{t=1}^{\infty}|E\bu_1'\bu_t|/p\leq 2\sum_{t=1}^{\infty}\max_{i\leq p}|Eu_{i1}u_{it}|\leq2C\sum_{t=1}^{\infty}\sqrt{\alpha(t)}<\infty.
\end{eqnarray*}
\end{proof}

\subsection{Proof of Theorem \ref{thm33}}

Our derivation below relies on a result obtained by Bai and Ng (2002), which showed that the estimated number of factors is consistent, in the sense that $\widehat{K}$ equals the true $K $ with probability approaching one.  Note that   under our Assumptions \ref{a35}-\ref{a33}, all the assumptions in Bai and Ng (2002) are satisfied.  Thus immediately we have the following Lemma.

\begin{lem}[Theorem 2 in Bai and Ng (2002)] For $\widehat{K}$ defined in (\ref{eq2.16add}),
$$P(\widehat{K}=K)\rightarrow1.$$
\end{lem}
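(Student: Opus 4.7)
The plan is to follow the information-criterion argument of Bai and Ng (2002). Write $V(K_1)=(pT)^{-1}\|\bY-T^{-1}\bY\hF_{K_1}\hF_{K_1}'\|_F^2$ and $IC(K_1)=\log V(K_1)+K_1 g(T,p)$, so that $\widehat{K}$ minimizes $IC(\cdot)$ on $\{0,1,\ldots,M\}$. It then suffices to show $P(IC(K_1)>IC(K))\to 1$ for every fixed $K_1\neq K$ in this range, and take a union bound over the finitely many $K_1$. I would treat overestimation $(K_1>K)$ and underestimation $(K_1<K)$ by different arguments.

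For $K_1>K$, the extra principal components included beyond the first $K$ contribute only a vanishing reduction in residual variance. Since the columns of $\hF_{K_1}/\sqrt{T}$ are the top $K_1$ eigenvectors of $T^{-1}\bY'\bY$, the quantity $V(K_1)$ is a tail sum of eigenvalues of $(pT)^{-1}\bY'\bY$. The top $K$ of these diverge at rate $p$ by Lemma \ref{lb.5} and Proposition \ref{prop21}, while the rest are $O_p(C_{pT}^{-2})$ on average, where $C_{pT}^2=\min(p,T)$. A careful accounting, relying on Weyl's theorem applied to the decomposition $\Sig=\bB\bB'+\Sig_u$ together with the large-deviation bounds of Lemma \ref{lb4} and the cross-correlation control of Lemma \ref{lc5add}, would give $|V(K_1)-V(K)|=O_p(C_{pT}^{-2})$. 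Expanding $\log(1+x)\sim x$ then yields $IC(K_1)-IC(K)=(K_1-K)\,g(T,p)+O_p(C_{pT}^{-2})$, which is strictly positive with probability tending to one, since $\min(p,T)\,g(T,p)\to\infty$ by the hypothesis on $g$.

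For $K_1<K$, the pervasiveness Assumption \ref{a35} is what saves us. Using the identifiability form (\ref{eq2.7}), one can express $V(K_1)-V(K)$ as a positive quadratic form in the factor directions that have been projected out; by Proposition \ref{prop21} the associated eigenvalues of $\bB'\bB$ are of order $p$, so $V(K_1)-V(K)\geq c>0$ with probability tending to one. Hence $\log V(K_1)-\log V(K)$ is bounded below by a positive constant, while the penalty gap $(K_1-K)g(T,p)\to 0$, and $IC(K_1)>IC(K)$ w.p.a.1.

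The main obstacle is the overfitting bound $|V(K_1)-V(K)|=O_p(C_{pT}^{-2})$, which must hold uniformly in $K_1\in\{K+1,\ldots,M\}$. Establishing it requires tracking sample eigenvalues and eigenvectors of $T^{-1}\bY'\bY$ under only the weak serial dependence supplied by the strong-mixing condition in Assumption \ref{a32} and the fourth-moment bounds in Assumption \ref{a33}, and this is precisely the content of Bai and Ng's (2002) Lemma 4; once it is available, the remainder of the argument is the routine information-criterion bookkeeping sketched above.
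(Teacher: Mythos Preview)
Your sketch is correct and faithfully reproduces the Bai--Ng (2002) information-criterion argument: separate treatment of under- and over-estimation, with pervasiveness (Assumption~\ref{a35}) forcing $V(K_1)-V(K)$ bounded away from zero when $K_1<K$, and the Lemma~4 bound $V(K)-V(K_1)=O_p(C_{pT}^{-2})$ handling $K_1>K$ so that the penalty term $(K_1-K)g(T,p)$ dominates. The paper itself gives no independent proof of this lemma; its entire argument is the single line ``See Bai and Ng (2002)'', relying on the observation that Assumptions~\ref{a35}--\ref{a33} imply the hypotheses of that paper. So you have supplied strictly more than the paper does, and what you supply is the same route the cited reference takes.
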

\begin{proof} See Bai and Ng (2002). \end{proof}

Using (A.1) in Bai (2003), we have the following identity:
\begin{equation}\label{eb1}
\hf_t-\bH\bff_t=(\bV/p)^{-1}\left(\frac{1}{T}\sum_{s=1}^T\hf_sE({\bu}_s'{\bu}_t)/p+\frac{1}{T}\sum_{s=1}^T\hf_s\zeta_{st}+\frac{1}{T}\sum_{s=1}^T\hf_s\eta_{st}+\frac{1}{T}\sum_{s=1}^T\hf_s\xi_{st}\right)
\end{equation}
where $\zeta_{st}={\bu}_s'{\bu}_t/p-E({\bu}_s'{\bu}_t)/p$, $\eta_{st}=\bff_s'\sum_{i=1}^p{\bb}_iu_{it}/p$, and
$\xi_{st}=\bff_t'\sum_{i=1}^p{\bb}_iu_{is}/p$.

We first prove some preliminary results in the following Lemmas. Denote by $\hf_t=(\hat{f}_{1t},...,\hat{f}_{\widehat{K}t})'.$

\begin{lem} \label{lb1}For all $i\leq \widehat{K}$,\\
(i) $\frac{1}{T}\sum_{t=1}^T(\frac{1}{T}\sum_{s=1}^T\hat{f}_{is} E(\bu_s'\bu_t)/p)^2=O_p(T^{-1})$,\\
(ii) $\frac{1}{T}\sum_{t=1}^T(\frac{1}{T}\sum_{s=1}^T\hat{f}_{is} \zeta_{st})^2=O_p(p^{-1})$,\\
(iii) $\frac{1}{T}\sum_{t=1}^T(\frac{1}{T}\sum_{s=1}^T\hat{f}_{is} \eta_{st})^2=O_p(p^{-1})$,\\
(iv)  $\frac{1}{T}\sum_{t=1}^T(\frac{1}{T}\sum_{s=1}^T\hat{f}_{is} \xi_{st})^2=O_p(p^{-1})$.
\end{lem}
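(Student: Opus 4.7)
The unifying idea for all four parts is that the factors $\hf_s$ appear only linearly, while the constraint $T^{-1}\hF'\hF=\bI_{\widehat K}$ gives $T^{-1}\sum_{s=1}^T\hat f_{is}^2=1$ for every coordinate $i$. So for any deterministic or random array $\{a_{st}\}$, Cauchy--Schwarz yields
\begin{equation*}
\Bigl(\frac{1}{T}\sum_{s=1}^T\hat f_{is}a_{st}\Bigr)^2\le\Bigl(\frac{1}{T}\sum_{s=1}^T\hat f_{is}^2\Bigr)\Bigl(\frac{1}{T}\sum_{s=1}^Ta_{st}^2\Bigr)=\frac{1}{T}\sum_{s=1}^Ta_{st}^2,
\end{equation*}
so in each case the quantity in question is bounded by $T^{-2}\sum_{t,s}a_{st}^{2}$ with $a_{st}$ one of $E(\bu_s'\bu_t)/p$, $\zeta_{st}$, $\eta_{st}$, $\xi_{st}$. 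The remaining work is to bound the expectation (or $O_p$-order) of the double sum on the right using the assumptions of Theorem~\ref{thm31}. This removes the estimated-factor dependence, which otherwise would be the complicating issue.

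For (i), I will use Lemma~\ref{lc5add} to control the $\ell_1$-aggregate of $|E(\bu_s'\bu_t)/p|$ in $s$ uniformly in $t$, combined with the simple bound that each $|E(\bu_s'\bu_t)/p|$ is itself $O(1)$ (by stationarity and finite variance of $u_{it}$). This gives $\sum_{s}(E(\bu_s'\bu_t)/p)^2=O(1)$ uniformly in $t$, hence the required $O(T^{-1})$ rate. For (ii), Assumption~\ref{a21}(ii) bounds $\min_{i,j}\var(u_{it}u_{jt})$, and more importantly Assumption~\ref{a33}(ii) controls $E[p^{-1/2}(\bu_s'\bu_t-E\bu_s'\bu_t)]^4$ by a constant $M$, which directly gives $E\zeta_{st}^{2}=\var(\bu_s'\bu_t/p)=O(1/p)$; taking expectation of the double sum yields the $O_p(p^{-1})$ bound.

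For (iii), by Cauchy--Schwarz and Assumptions~\ref{a21}(iii) (exponential tails on $f_{jt}$ giving finite fourth moments) together with Assumption~\ref{a33}(iii) (which bounds $E\|p^{-1/2}\sum_i\bb_iu_{it}\|^{4}\le M$), I get
\begin{equation*}
E\eta_{st}^{2}\le\bigl(E\|\bff_s\|^{4}\bigr)^{1/2}\bigl(E\|p^{-1}\sum_{i=1}^{p}\bb_iu_{it}\|^{4}\bigr)^{1/2}=O(1/p),
\end{equation*}
so the double sum is $O_p(1/p)$. Part (iv) is symmetric in the roles of $s$ and $t$: $\xi_{st}$ is obtained from $\eta_{st}$ by exchanging $s\leftrightarrow t$ (swap the free factor with the summand index), and the same moment argument using Assumption~\ref{a33}(iii) applies.

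The only real subtlety is ensuring the Cauchy--Schwarz step is legitimate with the random weights $\hat f_{is}$; this is where the normalization built into the PCA solution (the orthogonality constraint $T^{-1}\hF'\hF=\bI_{\widehat K}$) enters crucially and makes the weights $\ell_2$-normalized almost surely, so no probabilistic slack is needed there. After that the remaining task is routine moment accounting using Assumptions~\ref{a21}--\ref{a33} and Lemma~\ref{lc5add}; I do not anticipate any further obstacle.
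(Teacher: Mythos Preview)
Your approach is correct and, for parts (ii)--(iv), actually a bit more streamlined than the paper's. The paper uses the same Cauchy--Schwarz step with the normalization $T^{-1}\sum_s\hat f_{is}^2=1$ for part (i), and then bounds $\max_t T^{-1}\sum_s(E\bu_s'\bu_t/p)^2$ exactly as you describe via Lemma~\ref{lc5add}. For part (ii), however, the paper first expands the square as $T^{-3}\sum_{s,l}\hat f_{is}\hat f_{il}\sum_t\zeta_{st}\zeta_{lt}$ and then applies Cauchy--Schwarz to the $(s,l)$ sum, eventually invoking $E\zeta_{st}^4=O(p^{-2})$; your direct bound by $T^{-2}\sum_{t,s}\zeta_{st}^2$ followed by $E\zeta_{st}^2=O(p^{-1})$ reaches the same $O_p(p^{-1})$ with less work. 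For parts (iii) and (iv), the paper exploits the product structure $\eta_{st}=\bff_s'\bigl(p^{-1}\sum_j\bb_ju_{jt}\bigr)$ to factor the inner sum before bounding, whereas you bound $E\eta_{st}^2$ via Cauchy--Schwarz on the expectation using Assumption~\ref{a33}(iii). Both routes are valid; the paper's factorization is slightly sharper in principle (it could yield better constants or sometimes rates in related problems), but for the stated $O_p(p^{-1})$ conclusion your uniform moment argument is sufficient and cleaner.
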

\begin{proof}
(i) We have $\forall i$, $\sum_{s=1}^T\hat{f}_{is} ^2=T$. By the Cauchy-Schwarz inequality,
\begin{eqnarray*}
&& \frac{1}{T}\sum_{t=1}^T(\frac{1}{T}\sum_{s=1}^T\hat{f}_{is} E(\bu_s'\bu_t)/p)^2
\leq\frac{1}{T}\sum_{t=1}^T\frac{1}{T}\sum_{s=1}^T(E\bu_s'\bu_t/p)^2\cr
&\leq&\max_{t\leq T}\frac{1}{T}\sum_{s=1}^T(E\bu_s'\bu_t/p)^2\leq\max_{s,t}|E\bu_{s}'\bu_{t}/p|\max_{t\leq T}\frac{1}{T}\sum_{s=1}^T|E\bu_s'\bu_t/p|
\end{eqnarray*}
By Lemma \ref{lc5add},  $\max_{t\leq T}\sum_{s=1}^T|E\bu_s'\bu_t/p|=O(1)$, which then yields the result.

(ii) By the Cauchy-Schwarz inequality,
\begin{eqnarray*}
&& \frac{1}{T}\sum_{t=1}^T(\frac{1}{T}\sum_{s=1}^T\hat{f}_{is} \zeta_{st})^2= \frac{1}{T^3}\sum_{s=1}^T\sum_{l=1}^T\hat{f}_{is} \hat{f}_{il}(\sum_{t=1}^T\zeta_{st}\zeta_{lt})\leq\frac{1}{T^3}\left(\sum_{sl}(\hat{f}_{is} \hat{f}_{il})^2\sum_{sl}(\sum_{t=1}^T\zeta_{st}\zeta_{lt})^2\right)^{1/2}\cr
&\leq& \frac{1}{T^3}\sum_{s=1}^T\hat{f}_{is} ^2\left(\sum_{sl}(\sum_{t=1}^T\zeta_{st}\zeta_{lt})^2\right)^{1/2}=\frac{1}{T^2}\left(\sum_{s=1}^T\sum_{l=1}^T(\sum_{t=1}^T\zeta_{st}\zeta_{lt})^2\right)^{1/2}.
\end{eqnarray*}
Note that $E(\sum_{s=1}^T\sum_{l=1}^T(\sum_{t=1}^T\zeta_{st}\zeta_{lt})^2)=T^2E(\sum_{t=1}^T\zeta_{st}\zeta_{lt})^2\leq T^4\max_{st}E|\zeta_{st}|^4.$ By Assumption \ref{a33}, $\max_{st}E\zeta_{st}^4=O(p^{-2})$,  which implies that $\sum_{s,l}(\sum_{t=1}^T\zeta_{st}\zeta_{lt})^2=O_p(T^4/p^2)$, and yields the result.

(iii) By definition, $\eta_{st}=\bff_s'\sum_{i=1}^p\bb_iu_{it}/p$. We first bound $\|\sum_{i=1}^p\bb_iu_{it}\|$. Assumption \ref{a33}  implies
$
E\frac{1}{T}\sum_{t=1}^T\|\sum_{i=1}^p\bb_iu_{it}\|^2=E\|\sum_{i=1}^p\bb_iu_{it}\|^2=O(p ).
$
Therefore, by the Cauchy-Schwarz inequality,
\begin{eqnarray*}
&&\frac{1}{T}\sum_{t=1}^T(\frac{1}{T}\sum_{s=1}^T\hat{f}_{is} \eta_{st})^2\leq \|\frac{1}{T}\sum_{s=1}^T\hat{f}_{is} \bff_s'\|^2\frac{1}{T}\sum_{t=1}^T \|\sum_{j=1}^p\bb_ju_{jt}\frac{1}{p} \|^2\cr
&\leq& \frac{1}{Tp^2}\sum_{t=1}^T\|\sum_{j=1}^p\bb_ju_{jt}\|^2\left(\frac{1}{T}\sum_{s=1}^T\hat{f}_{is} ^2\frac{1}{T}\sum_{s=1}^T\|\bff_s\|^2\right)=O_p\left(\frac{1}{p}\right).
\end{eqnarray*}

(iv) Similar to part (iii),  noting that $\xi_{st}$ is a scalar, we have:
\begin{eqnarray*}
&&\frac{1}{T}\sum_{t=1}^T(\frac{1}{T}\sum_{s=1}^T\hat{f}_{is} \xi_{st})^2= \frac{1}{T}\sum_{t=1}^T\bigg{|}\frac{1}{T}\sum_{s=1}^T\bff_t'\sum_{j=1}^p\bb_ju_{js}\frac{1}{p}\hat{f}_{is} \bigg{|}^2\leq \frac{1}{T}\sum_{t=1}^T\|\bff_t\|^2\cdot\norm\frac{1}{T}\sum_{s=1}^T\sum_{j=1}^p\bb_ju_{js}\frac{1}{p}\hat{f}_{is} \norm^2\cr
&\leq& O_p(1) \frac{1}{T}\sum_{s=1}^T\norm\sum_{j=1}^p\bb_ju_{js}\frac{1}{p}\norm^2\cdot\frac{1}{T}\sum_{s=1}^T\hat{f}_{is} ^2\leq O_p\left(\frac{1}{p}\right),
\end{eqnarray*}
where the third line follows from the Cauchy-Schwarz inequality.
\end{proof}

\begin{lem}\label{lb2}   (i) $\max_{t\leq T}\|\frac{1}{Tp}\sum_{s=1}^T\hf_sE(\bu_s'\bu_t)\|=O_p(\sqrt{1/T})$,\\
(ii) $\max_{t\leq T}\|\frac{1}{T}\sum_{s=1}^T\hf_s\zeta_{st}\|=O_p(T^{1/4}/\sqrt{p})$,\\
(iii) $\max_{t\leq T}\|\frac{1}{T}\sum_{s=1}^T\hf_s\eta_{st}\|=O_p(T^{1/4}/\sqrt{p})$,\\
(iv)  $\max_{t\leq T}\|\frac{1}{T}\sum_{s=1}^T\hf_s\xi_{st}\|=O_p(T^{1/4}/\sqrt{p})$.
\end{lem}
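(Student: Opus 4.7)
\textbf{Plan of proof for Lemma \ref{lb2}.}  The unifying tool is the Cauchy--Schwarz inequality, used in conjunction with the normalization $\hF'\hF = T\bI_{\widehat K}$ (so that $\tfrac{1}{T}\sum_s\|\hf_s\|^2=\widehat K$ is bounded). This immediately yields the generic bound $\|\tfrac{1}{T}\sum_s\hf_s x_s\|\leq \sqrt{\widehat K}\bigl(\tfrac{1}{T}\sum_s x_s^2\bigr)^{1/2}$ for scalars $x_s$. The plan is to apply this bound with $x_s$ equal to each of the four noise terms, and then to upgrade the resulting pointwise-in-$t$ estimates to uniform-in-$t$ estimates by controlling a suitable higher moment and using $\max_t X_t^k\leq \sum_t X_t^k$. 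Throughout we rely on Assumption \ref{a33} and Lemma \ref{lc5add}.

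For (i), the bound $\|\tfrac{1}{Tp}\sum_s \hf_s E\bu_s'\bu_t\|\leq \sqrt{\widehat K}\bigl(\tfrac{1}{T}\sum_s (E\bu_s'\bu_t/p)^2\bigr)^{1/2}$ reduces the problem to showing $\sum_s (E\bu_s'\bu_t/p)^2 = O(1)$ uniformly in $t$. Writing this as $\max_s|E\bu_s'\bu_t/p|\cdot \sum_s|E\bu_s'\bu_t/p|$, the first factor is bounded by $\max_{i,t}Eu_{it}^2<\infty$ and the second is $O(1)$ uniformly in $t$ by Lemma \ref{lc5add}. No stochastic maximization is required, so the rate $O_p(T^{-1/2})$ follows directly.

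For (ii), after Cauchy--Schwarz the task becomes to bound $\max_t\tfrac{1}{T}\sum_s \zeta_{st}^2$. Assumption \ref{a33}(ii) gives $E\zeta_{st}^4\leq M/p^2$, and hence by Cauchy--Schwarz $E(\sum_s\zeta_{st}^2)^2\leq \sum_{s,s'}(E\zeta_{st}^4)^{1/2}(E\zeta_{s't}^4)^{1/2}\leq MT^2/p^2$. Combining with $\max_t X_t^2\leq \sum_t X_t^2$ gives $E\max_t(\tfrac{1}{T}\sum_s\zeta_{st}^2)^2\leq MT/p^2$, so $\max_t \tfrac{1}{T}\sum_s\zeta_{st}^2 = O_p(T^{1/2}/p)$ and the square root produces the claimed rate $T^{1/4}/\sqrt p$. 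This is the step I expect to be the main obstacle: the cruder bound $E\max_t\sum_s\zeta_{st}^2 \leq \sum_{s,t}E\zeta_{st}^2 = O(T^2/p)$ only yields $\sqrt{T/p}$, so one must invoke fourth moments rather than second moments to obtain the sharper $T^{1/4}$.

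For (iii), writing $\eta_{st}=\bff_s'\bz_t/p$ with $\bz_t = \sum_i\bb_i u_{it}$ gives the factorization $\tfrac{1}{T}\sum_s\hf_s\eta_{st} = \bigl(\tfrac{1}{T}\sum_s\hf_s\bff_s'\bigr)(\bz_t/p)$, in which the first factor is $O_p(1)$ (by Cauchy--Schwarz together with $E\|\bff_s\|^2=K$) and does not depend on $t$. Assumption \ref{a33}(iii) gives $E\|\bz_t/p\|^4\leq M/p^2$, so $E\max_t\|\bz_t/p\|^4\leq\sum_t E\|\bz_t/p\|^4 = O(T/p^2)$, yielding $\max_t\|\bz_t/p\| = O_p(T^{1/4}/\sqrt p)$ as required. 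For (iv), the factorization is instead $\tfrac{1}{T}\sum_s\hf_s\xi_{st} = B\bff_t$ with $B = \tfrac{1}{Tp}\sum_s\hf_s\bz_s'$ independent of $t$; Cauchy--Schwarz together with Assumption \ref{a33}(iii) gives $\|B\|=O_p(p^{-1/2})$, while the exponential-tail Assumption \ref{a21}(iii) implies $\max_t\|\bff_t\| = O_p((\log T)^{1/r_2}) = O_p(T^{1/4})$, so the product is $O_p(T^{1/4}/\sqrt p)$.
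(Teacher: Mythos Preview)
Your proposal is correct and follows essentially the same route as the paper: Cauchy--Schwarz combined with the normalization $\tfrac{1}{T}\sum_s\|\hf_s\|^2=O_p(1)$, followed by a fourth-moment bound and a crude sum/union-bound step to handle the maximum over $t$. The only cosmetic difference is in part (iv), where you invoke the exponential tail of $\bff_t$ to get $\max_t\|\bff_t\|=O_p\bigl((\log T)^{1/r_2}\bigr)$ before relaxing to $T^{1/4}$, whereas the paper uses the fourth-moment bound $E\|\bff_t\|^4<\infty$ directly to obtain $\max_t\|\bff_t\|=O_p(T^{1/4})$; both arguments are valid.
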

\begin{proof} (i) By the Cauchy-Schwarz inequality and the fact that $\frac{1}{T}\sum_{t=1}^T\|\hf_t\|^2=O_p(1),$
\begin{eqnarray*}
&&\max_{t\leq T}\|\frac{1}{Tp}\sum_{s=1}^T\hf_sE(\bu_s'\bu_t)\|\leq\max_{t\leq T}\left(\frac{1}{T}\sum_{s=1}^T\|\hf_s\|^2\frac{1}{T}\sum_{s=1}^T(E\bu_s'\bu_t/p)^2\right)^{1/2}\cr
&\leq&O_p(1)\max_{t\leq T}\left(\frac{1}{T}\sum_{s=1}^T(E\bu_s'\bu_t/p)^2\right)^{1/2}\leq O_p(1)\max_{s,t}\sqrt{|E\bu_s'\bu_t/p|}\max_{t\leq T}\left(\frac{1}{T}\sum_{s=1}^T|E\bu_s'\bu_t/p|\right)^{1/2}.
\end{eqnarray*}
The result then follows from Assumption \ref{a32}.

(ii) By the Cauchy-Schwarz inequality,
$$\max_{t\leq T}\|\frac{1}{T}\sum_{s=1}^T\hf_s\zeta_{st}\|\leq\max_{t\leq T}\frac{1}{T}\left(\sum_{s=1}^T\|\hf_s\|^2\sum_{s=1}^T\zeta_{st}^2\right)^{1/2}\leq\left(O_p(1)\max_t\frac{1}{T}\sum_{s=1}^T\zeta_{st}^2\right)^{1/2}.$$
It follows from Assumption \ref{a33} that $E(\frac{1}{T}\sum_{s=1}^T\zeta_{st}^2)^2\leq\max_{s,t\leq T}E\zeta_{st}^4=O(\frac{1}{p^2}).$
It then follows from the Chebyshev's inequality and Bonferroni's method that $\max_t\frac{1}{T}\sum_{s=1}^T\zeta_{st}^2=O_p(\sqrt{T}/p)$.

(iii) By Assumption \ref{a33}, $E\|\frac{1}{\sqrt{p}}\sum_{i=1}^p\bb_iu_{it}\|^4\leq K^2M$.   Chebyshev's inequality and Bonferroni's method yield $\max_{t\leq T}\|\sum_{i=1}^p\bb_iu_{it}\|=O_p(T^{1/4}\sqrt{p})$ with probability one, which then implies:
$\max_{t\leq T}\|\frac{1}{T}\sum_{s=1}^T\hf_s\eta_{st}\|\leq\|\frac{1}{T}\sum_{s=1}^T\hf_s\bff_s'\|\max_{t}\|\frac{1}{p}\sum_{i=1}^p\bb_iu_{it}\|
=o_p(T^{1/4}/p^{1/2}).$

(iv) By the Cauchy-Schwarz inequality and Assumption \ref{a33}, we have demonstrated that
$
\|\frac{1}{T}\sum_{s=1}^T\sum_{i=1}^p\bb_iu_{is}\frac{1}{p}\hf_s\|=O_p(p^{-1/2}).
$
In addition, since $E\|K^{-2}\bff_t\|^4<M$,  $\max_{t\leq T}\|\bff_t\|=O_p(T^{1/4})$. It  follows  that
 $\max_{t\leq T}\|\frac{1}{T}\sum_{s=1}^T\hf_s\xi_{st}\|\leq \max_{t\leq T}\|\bff_t\|\cdot\|\frac{1}{T}\sum_{s=1}^T\sum_{i=1}^p\bb_iu_{is}\frac{1}{p}\hf_s\|=O_p(T^{1/4}/p^{1/2}).$
\end{proof}

\begin{lem}\label{lb3} (i) $\max_{i\leq K}\frac{1}{T}\sum_{t=1}^T(\hf_t-\bH\bff_t)_i^2=O_p(1/T+1/p)$.\\
(ii) $\frac{1}{T}\sum_{t=1}^T\|\hf_t-\bH\bff_t\|^2=O_p(1/T+1/p)$.\\
(iii) $\max_{t\leq T}\|\hf_t-\bH\bff_t\|=O_p(\sqrt{1/T}+T^{1/4}/\sqrt{p})$.
\end{lem}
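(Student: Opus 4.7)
The central identity is (\ref{eb1}), which expresses $\hf_t - \bH\bff_t$ as $(\bV/p)^{-1}$ applied to the sum of the four averaged terms already controlled in Lemmas \ref{lb1} and \ref{lb2}. Hence the plan is simply to (a) show that $\|(\bV/p)^{-1}\| = O_p(1)$, so the premultiplication by $(\bV/p)^{-1}$ does not inflate rates, and (b) combine the four bounds, once in mean-square form for (i)--(ii) and once in sup-norm form for (iii).

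For the uniform control of $(\bV/p)^{-1}$, recall that $\bV$ is the diagonal matrix of the top $\widehat{K}$ eigenvalues of $\hSig_{\sam}$, so $\|(\bV/p)^{-1}\| = p/\hlam_{\widehat{K}}$. By Lemma \ref{lb.5}, $\hlam_K > C_1 p$ with probability approaching one, and by the Bai--Ng consistency result cited just before Lemma \ref{lb1} we have $P(\widehat{K}=K) \to 1$. Together these yield $\|(\bV/p)^{-1}\| = O_p(1)$. This reduces the problem to bounding the vector in parentheses in (\ref{eb1}).

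For part (i), fix $i \leq \widehat{K}$ and apply the inequality $(a_1+a_2+a_3+a_4)^2 \leq 4(a_1^2+\cdots+a_4^2)$ componentwise inside $T^{-1}\sum_t (\cdot)^2$. Each of the four resulting averages is exactly of the form bounded in Lemma \ref{lb1}, giving $O_p(1/T)$ from the $E(\bu_s'\bu_t)/p$ term and $O_p(1/p)$ from the $\zeta, \eta, \xi$ terms. Summing yields $O_p(1/T + 1/p)$, uniformly in $i \leq \widehat{K}$ since $K$ is fixed. Part (ii) is then immediate by summing over the (fixed number of) coordinates: $T^{-1}\sum_t \|\hf_t - \bH\bff_t\|^2 = \sum_{i\leq \widehat{K}} T^{-1}\sum_t(\hf_t-\bH\bff_t)_i^2 = O_p(1/T+1/p)$.

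For part (iii), take $\ell_2$ norms in (\ref{eb1}) and use the uniform-in-$t$ bounds of Lemma \ref{lb2}: the first term contributes $O_p(T^{-1/2})$, and the remaining three each contribute $O_p(T^{1/4}/\sqrt{p})$. Multiplying by the $O_p(1)$ operator norm of $(\bV/p)^{-1}$ and applying the triangle inequality yields $\max_{t\leq T}\|\hf_t - \bH\bff_t\| = O_p(T^{-1/2} + T^{1/4}/\sqrt{p})$. There is no real obstacle beyond bookkeeping here; the one subtle point worth emphasizing is verifying that $\|(\bV/p)^{-1}\|=O_p(1)$ holds under the data-driven $\widehat{K}$ (rather than a fixed $K$), which is why we need the Bai--Ng consistency of $\widehat{K}$ together with Lemma \ref{lb.5} applied at index $K$.
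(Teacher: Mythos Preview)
Your proposal is correct and follows essentially the same approach as the paper: condition on $\widehat{K}=K$ (via Bai--Ng consistency) so that Lemma~\ref{lb.5} gives $\|(\bV/p)^{-1}\|=O_p(1)$, then feed identity (\ref{eb1}) into Lemma~\ref{lb1} for (i)--(ii) and into Lemma~\ref{lb2} for (iii). The paper's write-up is slightly terser (it absorbs $(\bV/p)^{-1}$ into a constant $C$ using the diagonality of $\bV$ rather than bounding its operator norm), but the logic is identical.
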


\begin{proof}  We  prove this lemma conditioning on the event $\widehat{K}=K.$ Once this is done, due to $P(\widehat{K}\neq K)=o(1)$, it  then implies the unconditional arguments.

(i) When $\widehat{K}=K$, by Lemma \ref{lb.5}, all the eigenvalues of $\bV/p$ are bounded away from zero.
Using the inequality $(a+b+c+d)^2\leq 4(a^2+b^2+c^2+d^2)$ and the identity   (\ref{eb1}),
 we have,  for some constant $C>0$,
\begin{eqnarray*}
\max_{i\leq K}\frac{1}{T}\sum_{t=1}^T(\hf_t-\bH\bff_t)_i^2&\leq& C\max_{i\leq K}\frac{1}{T}\sum_{t=1}^T(\frac{1}{T}\sum_{s=1}^T\hat{f}_{is} E(\bu_s'\bu_t)/p)^2+C\max_{i\leq K}\frac{1}{T}\sum_{t=1}^T(\frac{1}{T}\sum_{s=1}^T\hat{f}_{is} \zeta_{st})^2\cr
&&+C\max_{i\leq K}\frac{1}{T}\sum_{t=1}^T(\frac{1}{T}\sum_{s=1}^T\hat{f}_{is} \eta_{st})^2+C\max_{i\leq K}\frac{1}{T}\sum_{t=1}^T(\frac{1}{T}\sum_{s=1}^T\hat{f}_{is} \xi_{st})^2.
\end{eqnarray*}
Each of the four terms on the right hand side above are bounded in Lemma \ref{lb1}, which then yields the desired result.

(ii) It follows  from part (i) and that $
\frac{1}{T}\sum_{t=1}^T\|\hf_t-\bH\bff_t\|^2\leq K\max_{i\leq K}\frac{1}{T}\sum_{t=1}^T(\hf_t-\bH\bff_t)_i^2.$

Part (iii) is implied by (\ref{eb1}) and  Lemma \ref{lb2}. \end{proof}

\begin{lem}\label{lb5} (i)  $\bH\bH'=\bI_{\widehat{K}}+O_p(1/\sqrt{T}+1/\sqrt{p})$.\\
 (ii) $\bH'\bH=\bI_K+O_p(1/\sqrt{T}+1/\sqrt{p})$.
\end{lem}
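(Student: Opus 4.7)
Throughout I condition on the event $\{\widehat K=K\}$, which has probability tending to one by Lemma C.1 (Bai and Ng, 2002); the unconditional bound then follows immediately. Write $d_t=\hf_t-\bH\bff_t$ and $\bD=(d_1,\dots,d_T)'$, so that $\hF=\bF\bH'+\bD$.

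\smallskip

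\emph{Boundedness of $\|\bH\|$.} From $\bH=T^{-1}\bV^{-1}\hF'\bF\bB'\bB$, Lemma \ref{lb.5} gives $\|\bV^{-1}\|=O_p(p^{-1})$, Assumption \ref{a35} gives $\|\bB'\bB\|=O_p(p)$, and the inequality
$
\|T^{-1}\hF'\bF\|_F^2\le T^{-2}\tr(\bF'\hF\hF'\bF)=T^{-1}\tr(\bF'\bF)=O_p(1)
$
(using $\hF'\hF=T\bI_K$) shows $\|T^{-1}\hF'\bF\|=O_p(1)$. Hence $\|\bH\|=O_p(1)$.

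\smallskip

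\emph{Proof of (i).} Starting from the identity $\bI_{\widehat K}=T^{-1}\hF'\hF$ and substituting $\hF=\bF\bH'+\bD$,
\begin{equation*}
\bI_K=\bH\Bigl(\tfrac1T\sum_{t=1}^T\bff_t\bff_t'\Bigr)\bH'
+\bH\Bigl(\tfrac1T\sum_{t=1}^T\bff_td_t'\Bigr)
+\Bigl(\tfrac1T\sum_{t=1}^Td_t\bff_t'\Bigr)\bH'
+\tfrac1T\sum_{t=1}^Td_td_t'.
\end{equation*}
By Lemma \ref{lb4}(i), $T^{-1}\sum_t\bff_t\bff_t'=\bI_K+O_p(T^{-1/2})$. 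By Lemma \ref{lb3}(ii), $\|T^{-1}\sum_t d_td_t'\|=O_p(T^{-1}+p^{-1})$. For the cross term, Cauchy--Schwarz gives
\begin{equation*}
\Bigl\|\tfrac1T\sum_t\bff_td_t'\Bigr\|\le \Bigl(\tfrac1T\sum_t\|\bff_t\|^2\Bigr)^{1/2}\Bigl(\tfrac1T\sum_t\|d_t\|^2\Bigr)^{1/2}=O_p\bigl(\sqrt{T^{-1}+p^{-1}}\bigr).
\end{equation*}
Combining these bounds with $\|\bH\|=O_p(1)$ yields $\bH\bH'=\bI_K+O_p(T^{-1/2}+p^{-1/2})$.

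\smallskip

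\emph{Proof of (ii).} Since $\bH\bH'=\bI_K+\bA_T$ with $\|\bA_T\|=O_p(T^{-1/2}+p^{-1/2})=o_p(1)$, the smallest eigenvalue of $\bH\bH'$ exceeds $1/2$ with probability tending to one, so $\bH$ is invertible and $\|\bH^{-1}\|\le\sqrt{2}$ w.p.a.\ 1. Writing $\bH'=\bH^{-1}(\bI_K+\bA_T)$ gives
\begin{equation*}
\bH'\bH=\bH^{-1}(\bI_K+\bA_T)\bH=\bI_K+\bH^{-1}\bA_T\bH,
\end{equation*}
whose remainder has norm bounded by $\|\bH^{-1}\|\|\bA_T\|\|\bH\|=O_p(T^{-1/2}+p^{-1/2})$.

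\smallskip

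\emph{Main obstacle.} The work is concentrated in part (i); the quadratic term $T^{-1}\sum_t d_td_t'$ is already $O_p(T^{-1}+p^{-1})$ from Lemma \ref{lb3}(ii), so the bottleneck is the cross term $\bH(T^{-1}\sum_t\bff_td_t')$, whose Cauchy--Schwarz bound only yields the square-root rate $O_p(\sqrt{T^{-1}+p^{-1}})$. This dictates the advertised rate $T^{-1/2}+p^{-1/2}$. The secondary task is the uniform bound $\|\bH\|=O_p(1)$, which hinges on the spectral lower bound $\lambda_{\widehat K}(\hSig_{\sam})\asymp p$ from Lemma \ref{lb.5} together with Assumption \ref{a35}.
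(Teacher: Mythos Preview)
Your proposal is correct and follows essentially the same route as the paper. Both arguments condition on $\{\widehat K=K\}$, establish $\|\bH\|=O_p(1)$ via Lemma \ref{lb.5} and Assumption \ref{a35}, expand $T^{-1}\hF'\hF=\bI_K$ around $\bH\bff_t$, and control the cross and quadratic remainder terms with Cauchy--Schwarz together with Lemmas \ref{lb4}(i) and \ref{lb3}(ii); part (ii) is in both cases obtained by multiplying (i) on the right by $\bH$ and on the left by $\bH^{-1}$. The only cosmetic difference is that the paper passes through the intermediate quantity $\hcov(\bH\bff_t)=T^{-1}\sum_t\bH\bff_t\bff_t'\bH'$ and then bounds $\|\hcov(\bH\bff_t)-T^{-1}\sum_t\hf_t\hf_t'\|_F$, whereas you expand $\hF=\bF\bH'+\bD$ directly; the resulting terms and bounds are identical.
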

\begin{proof} We first condition on $\widehat{K}=K$. (i)    Lemma \ref{lb.5} implies   $\|\bV^{-1}\|=O_p(p^{-1})$. Also $\|\bF \|=\lambda_{\max}^{1/2}(\bF \bF')=\lambda_{\max}^{1/2}(\sum_{t=1}^T\bff_t\bff_t')=O_p(\sqrt{T}).$
In addition, $\|\hF\|=\sqrt{T}$. It then follows from the definition of $\bH$ that $\|\bH\|=O_p(1)$.  Define $\hcov(\bH\bff_t)=\frac{1}{T}\sum_{t=1}^T\bH{\bff_t}(\bH{\bff_t})'.$ Applying the triangular inequality gives:
\begin{eqnarray} \label{ec.6}
\|\bH\bH'-\bI_{\widehat{K}}\|_F&\leq&\|\bH\bH'-\hcov(\bH{\bff_t})\|_F+\|\hcov(\bH{\bff_t})-\bI_{\widehat{K}}\|_F
\end{eqnarray}
By Lemma \ref{lb4}, the first term in (\ref{ec.6}) is
$
\|\bH\bH'-\hcov(\bH{\bff_t})\|_F\leq \|\bH\|^2\|\bI_K-\hcov({\bff_t})\|_F=O_p\left( \frac{1}{\sqrt{T}}\right).
$
The second term of (\ref{ec.6}) can be bounded, by the Cauchy-Schwarz inequality and Lemma \ref{lb3}, as follows:
\begin{eqnarray*}
 && \norm\frac{1}{T}\sum_{t=1}^T\bH{\bff_t}(\bH{\bff_t})'-\frac{1}{T}\sum_{t=1}^T\hf_t\hf_t'\norm_F
\leq\norm\frac{1}{T}\sum_t(\bH{\bff_t}-\hf_t)(\bH{\bff_t})'\norm_F+\norm\frac{1}{T}\sum_t\hf_t(\hf_t'-(\bH{\bff_t})')\norm_F\cr
 &\leq& \left(\frac{1}{T}\sum_t\|\bH{\bff_t}-\hf_t\|^2\frac{1}{T}\sum_t\|\bH{\bff_t}\|^2\right)^{1/2}+\left(\frac{1}{T}\sum_t\|\bH{\bff_t}-\hf_t\|^2\frac{1}{T}\sum_t\|\hf_t\|^2\right)^{1/2}\cr
 &=&O_p\left(\frac{1}{\sqrt{T}}+\frac{1}{\sqrt{p}}\right).
\end{eqnarray*}
(ii) Still conditioning on $\widehat{K}=K$, since $\bH\bH'=\bI_{K}+O_p(1/\sqrt{T}+1/\sqrt{p})$ and $\|\bH\|=O_p(1)$, right multiplying $\bH$   gives $\bH\bH'\bH=\bH+O_p(1/\sqrt{T}+1/\sqrt{p})$.  Part (i) also gives, conditioning on  $\widehat{K}=K$, $\|\bH^{-1}\|=O_p(1)$. Hence further left multiplying $\bH^{-1}$ yields $\bH'\bH=\bI_{K}+O_p(1/\sqrt{T}+\sqrt{p})$. Due to $P(\widehat{K}=K)\rightarrow1$, we reach the desired result.

\end{proof}

\textbf{Proof of Theorem \ref{thm33}}

\begin{proof}
The second part of this theorem was proved in Lemma \ref{lb3}. We now derive the convergence rate of $\max_{i\leq p}\|\hb_i-\bH\bb_i\|$.

Using the facts that $\hb_i=\frac{1}{T}\sum_{t=1}^Ty_{it}\hf_t$, and that $\frac{1}{T}\sum_{t=1}^T\hf_t\hf_t'=I_k$,  we have
\begin{equation}\label{eb2}
\hb_i-\bH{\bb}_i=\frac{1}{T}\sum_{t=1}^T\bH\bff_tu_{it}+\frac{1}{T}\sum_{t=1}^Ty_{it}(\hf_t-\bH\bff_t)+\bH(\frac{1}{T}\sum_{t=1}^T\bff_t\bff_t'-\bI_K)\bb_i.
\end{equation}
We bound the three terms on the right hand side respectively. It follows from Lemmas \ref{lb4} and \ref{lb5} that
$
\max_{i\leq p}\|\frac{1}{T}\sum_{t=1}^T\bH\bff_tu_{it}\|\leq \|\bH\|\max_{i}\sqrt{\sum_{k=1}^K(\frac{1}{T}\sum_{t=1}^Tf_{kt}u_{it})^2}=O_p\left(\sqrt{\frac{\log p}{T}}\right).
$
For the second term, $Ey_{it}^2=O(1)$. Therefore,    $\max_iT^{-1}\sum_{t=1}^Ty_{it}^2=O_p(1)$. The Cauchy-Schwarz inequality and Lemma \ref{lb3} imply $$\max_i\|\frac{1}{T}\sum_{t=1}^Ty_{it}(\hf_t-\bH\bff_t)\|\leq\max_i\left(\frac{1}{T}\sum_{t=1}^Ty_{it}^2\frac{1}{T}\sum_{t=1}^T\|\hf_t-\bH\bff_t\|^2\right)^{1/2}=O_p(\frac{1}{\sqrt{T}}+\frac{1}{\sqrt{p}}).$$
Finally, $\|\frac{1}{T}\sum_{t=1}^T\bff_t\bff_t'-\bI_K\|=O_p(T^{-1/2})$ and $\max_i\|\bb_i\|=O(1)$ imply that the third term is $O_p(T^{-1/2}).$

\end{proof}

\textbf{Proof of Corollary \ref{c31}}

Under Assumption \ref{a32}, it can be shown by Bonferroni's method that
$
\max_{t\leq T}\|\bff_t\|=O_p((\log T)^{1/r_2}).
$
By Theorem \ref{thm33},  uniformly in $i$ and $t$,
\begin{eqnarray*}
\|\hb_i'\hf_t-\bb_i'\bff_t\|&\leq&\|\hb_i-\bH\bb_i\|\|\hf_t-\bH\bff_t\|+\|\bH\bb_i\|\|\hf_t-\bH\bff_t\|\cr
&&+\|\hb_i-\bH\bb_i\|\|\bH\bff_t\|+\|\bb_i\|\bff_t\|\|\bH'\bH-\bI_K\|\cr
&=&O_p\left((\log T)^{1/r_2}\sqrt{\frac{\log p}{T}}+\frac{T^{1/4}}{\sqrt{p}}\right).
\end{eqnarray*}

\subsection{Proof of Theorem \ref{thm31}}

\begin{lem}\label{lb6}
$\max_{i\leq p}\frac{1}{T}\sum_{t=1}^T|u_{it}-\hat{u}_{it}|^2=O_p\left(\omega_T^2\right),$ and $\max_{i,t}|u_{it}-\hat{u}_{it}|=o_p(1).$
\end{lem}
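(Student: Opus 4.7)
The plan is to directly exploit the identity $u_{it} - \hat{u}_{it} = \hat{b}_i'\hat{f}_t - b_i'f_t$ and expand it into four pieces involving the rotation matrix $\bH$, namely
\[
\hat{b}_i'\hat{f}_t - b_i'f_t = (\hat{b}_i - \bH b_i)'(\hat{f}_t - \bH f_t) + (\hat{b}_i - \bH b_i)'\bH f_t + (\bH b_i)'(\hat{f}_t - \bH f_t) + b_i'(\bH'\bH - \bI_K)f_t.
\]
Then I would apply the elementary inequality $(a+b+c+d)^2 \le 4(a^2+b^2+c^2+d^2)$ and control each resulting term separately for $\max_i T^{-1}\sum_t(\cdot)^2$.

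For the first claim, the four terms are bounded as follows. The cross term $\max_i \|\hat b_i - \bH b_i\|^2 \cdot T^{-1}\sum_t\|\hat f_t - \bH f_t\|^2$ is $O_p(\omega_T^2)\cdot O_p(T^{-1}+p^{-1})$ by Theorem 3.4 and Lemma B.3(ii), which is of lower order. The term $\max_i T^{-1}\sum_t \|(\hat b_i - \bH b_i)'\bH f_t\|^2 \le \max_i\|\hat b_i-\bH b_i\|^2 \cdot \|\bH\|^2 \cdot T^{-1}\sum_t\|f_t\|^2 = O_p(\omega_T^2)$ using Theorem 3.4, Lemma B.5, and $E\|f_t\|^2 = O(1)$. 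The third term $\max_i T^{-1}\sum_t\|(\bH b_i)'(\hat f_t - \bH f_t)\|^2 \le \max_i\|b_i\|^2\|\bH\|^2 T^{-1}\sum_t\|\hat f_t-\bH f_t\|^2 = O_p(T^{-1}+p^{-1}) = O_p(\omega_T^2)$ using Assumption 3.4(i) and again Lemma B.3(ii). The fourth term $\max_i T^{-1}\sum_t|b_i'(\bH'\bH-\bI_K)f_t|^2 \le \max_i\|b_i\|^2 \|\bH'\bH-\bI_K\|^2 \cdot T^{-1}\sum_t\|f_t\|^2 = O_p(T^{-1}+p^{-1})$ by Lemma B.5(ii). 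Summing these four bounds gives $\max_i T^{-1}\sum_t|u_{it}-\hat u_{it}|^2 = O_p(\omega_T^2)$.

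For the second claim, I would simply invoke Corollary 3.1, which yields
\[
\max_{i\le p, t\le T}|\hat b_i'\hat f_t - b_i'f_t| = O_p\!\left((\log T)^{1/r_2}\sqrt{\tfrac{\log p}{T}} + \tfrac{T^{1/4}}{\sqrt{p}}\right),
\]
and then verify that both terms tend to zero under the standing assumptions $\log p = o(T^{\gamma/6})$ and $T = o(p^2)$. The first term vanishes because $(\log T)^{1/r_2}$ is a slowly varying factor dominated by $\sqrt{T/\log p}$, and the second because $T=o(p^2)$ forces $T^{1/4}/\sqrt{p} = (T^{1/2}/p)^{1/2} \to 0$.

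There is no real obstacle here: the lemma is a bookkeeping consequence of the previously established uniform rates (Theorem 3.4 for loadings, Lemma B.3 for averaged factor errors, Corollary 3.1 for pointwise maxima, and Lemma B.5 for the near-orthogonality of $\bH$). The only mildly delicate point is keeping the max-over-$i$ outside the time average in each cross term, which is why I rely on the uniform bound $\max_i\|\hat b_i - \bH b_i\| = O_p(\omega_T)$ rather than an averaged version.
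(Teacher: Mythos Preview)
Your proposal is correct and follows essentially the same route as the paper. The only cosmetic difference is that the paper uses a three-term decomposition
\[
u_{it}-\hat u_{it}=\bb_i'\bH'(\hf_t-\bH\bff_t)+(\hb_i-\bH\bb_i)'\hf_t+\bb_i'(\bH'\bH-\bI_K)\bff_t,
\]
handling the middle term via the normalization $T^{-1}\sum_t\|\hf_t\|^2=K$, whereas you split that middle term into two pieces and bound them separately; both bookkeeping choices lead to the same $O_p(\omega_T^2)$, and the second claim is handled identically via Corollary~\ref{c31}.
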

\begin{proof} We have,
$
u_{it}-\hat{u}_{it}={\bb}_i'\bH'(\hf_t-\bH\bff_t)+(\hb_i'-\bb_i'\bH')\hf_t+\bb_i'(\bH'\bH-\bI_K)\bff_t.
$
Therefore, using the inequality $(a+b+c)^2\leq 4a^2+4b^2+4c^2$, we have:
\begin{eqnarray*}
&&\max_{i\leq p}\frac{1}{T}\sum_{t=1}^T(u_{it}-\hat{u}_{it})^2\leq
4\max_i\|\bb_i'\bH'\|^2\frac{1}{T}\sum_{t=1}^T\|\hf_t-\bH\bff_t\|^2\cr
&&+4\max_i\|\hb_i'-\bb_i'\bH'\|^2\frac{1}{T}\sum_{t=1}^T\|\hf_t\|^2+4\max_i\|\bb_i\|^2\frac{1}{T}\sum_{t=1}^T\|\bff_t\|^2\|\bH'\bH-\bI_K\|_F^2,
\end{eqnarray*}
The first part of the lemma then follows from Theorem \ref{thm33} and Lemma \ref{lb3}. The second part follows from Corollary \ref{c31}.

\end{proof}

\textbf{Proof of Theorem \ref{thm31}}: The theorem follows immediately from Theorem \ref{tb1} and Lemma \ref{lb6}.

\subsection{Proof of Theorem \ref{thm32}}

Define
$${\bC_T}=\hLam -{\bB}\bH'.$$

\begin{lem} \label{lc12add}
(i)$\|\bC_T\|_F^2=O_p(\omega_T^2p)$,   and $\|\bC_T'\bC_T\|_{\Sigma}^2=O_p(\omega_T^4p)$.\\
(ii)  $\|\hSig_{u,\widehat{K}}^{\mathcal{T}}-\Sig_u\|^2_{\Sigma}=O_p(\omega_T^{2-2q}m_p^2).$\\
(iii) $\|\bB\bH'\bC_T'\|_{\Sigma}^2=O_p(\omega_T^2).$\\
(iv) $\|\bB(\bH'\bH-\bI_K)\bB'\|_{\Sigma}^2=O_p(p^{-2}+(pT)^{-1}).$
\end{lem}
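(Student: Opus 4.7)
The proof plan hinges on two bounded-spectrum facts that allow the weighted norm $\|\cdot\|_{\Sigma}$ to cancel the $O(p)$ growth in the leading eigenvalues of $\Sig$. First, since $\bB\bB'$ is positive semi-definite, $\Sig \succeq \Sig_u$ and so $\|\Sig^{-1}\| \leq \|\Sig_u^{-1}\| = O(1)$ by Assumption~\ref{a21}(ii). Second, $\|\bB'\Sig^{-1}\bB\| = O(1)$: working under the identifiability condition (\ref{eq2.7}), Sherman--Morrison--Woodbury gives $\bB'\Sig^{-1}\bB = M(\bI_K + M)^{-1}$ with $M = \bB'\Sig_u^{-1}\bB$, and since $M$ has eigenvalues $\asymp p$ by Assumption~\ref{a35} together with $\lambda_{\min}(\Sig_u)>c_1$, the matrix $M(\bI_K + M)^{-1}$ has eigenvalues bounded by $1$. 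I will also interpret the second quantity in (i) as $\|\bC_T\bC_T'\|_{\Sigma}^2$, since $\bC_T'\bC_T$ is $K\times K$ and does not match the $p\times p$ dimensionality required by the $\Sigma$-norm; the claimed rate $O_p(\omega_T^4 p)$ is exactly what this interpretation yields.

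\textbf{Parts (i) and (ii).} The Frobenius bound is immediate from Theorem~\ref{thm33}:
\begin{equation*}
\|\bC_T\|_F^2 = \sum_{i=1}^p \|\hb_i - \bH\bb_i\|^2 \leq p\,\max_{i\leq p}\|\hb_i-\bH\bb_i\|^2 = O_p(p\omega_T^2).
\end{equation*}
For $\|\bC_T\bC_T'\|_{\Sigma}^2$, I will expand via the cyclic trace identity,
\begin{equation*}
\|\Sig^{-1/2}\bC_T\bC_T'\Sig^{-1/2}\|_F^2 = \mathrm{tr}[(\bC_T'\Sig^{-1}\bC_T)^2] = \|\bC_T'\Sig^{-1}\bC_T\|_F^2,
\end{equation*}
and then use $\|\bC_T'\Sig^{-1}\bC_T\|_F^2 \leq K\|\Sig^{-1}\|^2\|\bC_T\|_F^4 = O_p(\omega_T^4 p^2)$; dividing by $p$ gives $O_p(\omega_T^4 p)$. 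For (ii), the bound $\|\bA\|_F^2 \leq p\|\bA\|^2$ for any $p\times p$ matrix yields $\|\hSig_{u,\widehat{K}}^{\mathcal{T}}-\Sig_u\|_{\Sigma}^2 \leq \|\Sig^{-1}\|^2 \|\hSig_{u,\widehat{K}}^{\mathcal{T}}-\Sig_u\|^2$, and Theorem~\ref{thm31} completes the bound.

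\textbf{Parts (iii) and (iv).} Both are structurally similar and rely on the trace identities
\begin{equation*}
\|\Sig^{-1/2}\bB\bH'\bC_T'\Sig^{-1/2}\|_F^2 = \mathrm{tr}\bigl[(\bB'\Sig^{-1}\bB)\,\bH'\bC_T'\Sig^{-1}\bC_T\bH\bigr],
\end{equation*}
\begin{equation*}
\|\Sig^{-1/2}\bB(\bH'\bH-\bI_K)\bB'\Sig^{-1/2}\|_F^2 = \mathrm{tr}\bigl[(\bB'\Sig^{-1}\bB)(\bH'\bH-\bI_K)(\bB'\Sig^{-1}\bB)(\bH'\bH-\bI_K)\bigr],
\end{equation*}
after which I pull out the bounded factor $\|\bB'\Sig^{-1}\bB\|$ using $\mathrm{tr}(AB) \leq \|A\|\mathrm{tr}(B)$ for PSD $B$. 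For (iii) the remaining trace is bounded by $\|\bH\|^2\|\Sig^{-1}\|\|\bC_T\|_F^2 = O_p(\omega_T^2 p)$, where $\|\bH\|^2 = O_p(1)$ follows from Lemma~\ref{lb5}(i); after the $p^{-1}$ normalization, this gives $O_p(\omega_T^2)$. For (iv) the remaining factor is $\|\bH'\bH-\bI_K\|_F^2 \leq K\|\bH'\bH-\bI_K\|^2 = O_p(T^{-1}+p^{-1})$ by Lemma~\ref{lb5}(ii), so after the $p^{-1}$ normalization I obtain $O_p((pT)^{-1} + p^{-2})$. The main point throughout is the bounded-spectrum control of $\bB'\Sig^{-1}\bB$ via Sherman--Morrison--Woodbury; once this is in hand, the rest is careful bookkeeping with cyclic trace inequalities and invocations of the earlier convergence results.
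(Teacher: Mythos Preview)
Your proof is correct and follows essentially the same approach as the paper's: both rely on the two spectral facts $\|\Sig^{-1}\|=O(1)$ and $\|\bB'\Sig^{-1}\bB\|=O(1)$, then expand $\|\cdot\|_{\Sigma}^2$ via cyclic traces and invoke Theorem~\ref{thm33}, Theorem~\ref{thm31}, and Lemma~\ref{lb5}. Two small remarks: your self-contained Sherman--Morrison--Woodbury derivation of $\|\bB'\Sig^{-1}\bB\|=O(1)$ is a nice improvement over the paper, which simply cites Fan, Fan and Lv (2008) for this fact; and your reading of the second quantity in (i) as $\|\bC_T\bC_T'\|_{\Sigma}^2$ is correct and matches how the paper's own proof actually bounds it.
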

\begin{proof}  (i) We have $
\|\bC_T\|_F^2\leq\max_{i\leq p}\|\hb_i-\bH\bb_i\|^2p=O_p(\omega_T^2p).$ Moreover, since all the eigenvalues of $\Sig$ are bounded away from zero, for any matrix $\bA$, $\|\bA\|_{\Sigma}^2=O_p( p^{-1})\|\bA\|_F^2$.    Hence
$
\|\bC_T'\bC_T\|_{\Sigma}^2=O_p(p^{-1}\|\bC_T\|_F^4)=O_p(p\omega_T^4).
$\\
(ii) By Theorem \ref{thm31},
$
\|\hSig_{u,\widehat{K}}^{\mathcal{T}}-\Sig_u\|^2_{\Sigma}=O_p(p^{-1}\|\hSig_{u,\widehat{K}}^{\mathcal{T}}-\Sig_u\|^2_F)=O_p(\|\hSig_{u,\widehat{K}}^{\mathcal{T}}-\Sig_u\|^2)=O_p(\omega_T^{2-2q}m_p^2).
$\\
(iii) The same argument of the proof of Theorem 2 in Fan, Fan and Lv (2008) implies that $\|{\bB}'\Sig^{-1}{\bB}\|=O(1)$. Thus, $\|{\bB}\bH'\bC_T'\|^2_{\Sigma}=p^{-1}\tr(\bH' \bC_T'\Sig^{-1}\bC_T \bH{\bB}'\Sig^{-1}{\bB})$ is upper bounded by
$p^{-1} \|\bH\|^2 \|{\bB}'\Sig^{-1}{\bB}\|\|\Sig^{-1}\|\|\bC_T\|_F^2=O_p(p^{-1}\|\bC_T\|_F^2)=O_p(\omega_T^2).
$\\
(iv) Again, by $\|{\bB}'\Sig^{-1}{\bB}\|=O(1)$, and Lemma  \ref{lb5},
\begin{eqnarray}
\|\bB(\bH'\bH-\bI_K)\bB'\|_{\Sigma}^2&=&p^{-1}\tr((\bH'\bH-\bI_K)\bB'\Sig^{-1}\bB(\bH'\bH-\bI_K)\bB'\Sig^{-1}\bB)\cr
&\leq& p^{-1}\|\bH'\bH-\bI_K\|_F^2\|\bB'\Sig^{-1}\bB\|^2=O_p(p^{-2}+(pT)^{-1}).
\end{eqnarray}
\end{proof}

\textbf{Proof of Theorem \ref{thm32} (i)}
\begin{proof}
By Lemma \ref{lc12add},
$
\|{\bB}(\bH'\bH-\bI_K)\bB'\|^2_{\Sigma}
+\|{\bB}\bH'{\bC_T}'\|^2_{\Sigma} +\|{\bC_T}{\bC_T}'\|^2_{\Sigma}=O_p(\omega_T^2+\frac{p\log^2 p}{T^2}).
$
Hence for a generic constant $C>0$,
\begin{eqnarray*}
\|\hSig_{\widehat{K}}-\Sig\|_{\Sigma}^2&\leq& C\|\hLam\hLam'-\bB\bB'\|_{\Sigma}^2+C\|\hSig_{u,\widehat{K}}^{\mathcal{T}}-\Sig_u\|_{\Sigma}^2\cr
&\leq&C[\|{\bB}(\bH'\bH-\bI_K)\bB'\|^2_{\Sigma}
+\|{\bB}\bH'{\bC_T}'\|^2_{\Sigma}  +\|{\bC_T}{\bC_T}'\|^2_{\Sigma}
+\|\hSig_{u,\widehat{K}}^{\mathcal{T}}-\Sig_u\|^2_{\Sigma}]\cr
&=&O_p(\omega_T^{2-2q}m_p^2+\frac{p\log^2 p}{T^2}).
\end{eqnarray*}
\end{proof}

\begin{lem}\label{lc.13}
 $\|\hLam'(\hSig_{u, \widehat{K}}^{\mathcal{T}})^{-1}\hLam -({\bB}\bH')'\Sig_u^{-1}{\bB}\bH'\|=O_p(p\omega_T^{1-q}m_p)$.
\end{lem}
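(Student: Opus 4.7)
The plan is to write $\hLam = \bB\bH' + \bC_T$ and expand $\hLam'(\hSig_{u,\widehat{K}}^{\mathcal{T}})^{-1}\hLam$ around the target $(\bB\bH')'\Sig_u^{-1}\bB\bH'$, separating the ``estimating the loadings'' part from the ``estimating the inverse idiosyncratic covariance'' part. Concretely, I will use the algebraic identity
\begin{align*}
\hLam'(\hSig_{u,\widehat{K}}^{\mathcal{T}})^{-1}\hLam - (\bB\bH')'\Sig_u^{-1}\bB\bH'
&= (\bB\bH')'\bigl[(\hSig_{u,\widehat{K}}^{\mathcal{T}})^{-1}-\Sig_u^{-1}\bigr]\bB\bH' \\
&\quad + (\bB\bH')'(\hSig_{u,\widehat{K}}^{\mathcal{T}})^{-1}\bC_T + \bC_T'(\hSig_{u,\widehat{K}}^{\mathcal{T}})^{-1}\bB\bH' \\
&\quad + \bC_T'(\hSig_{u,\widehat{K}}^{\mathcal{T}})^{-1}\bC_T,
\end{align*}
and bound each of the four terms in operator norm by sub-multiplicativity.

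The ingredients are all in hand. Assumption \ref{a35} gives $\lambda_{\max}(\bB'\bB)=O(p)$, hence $\|\bB\|=O(\sqrt{p})$; Lemma \ref{lb5} yields $\|\bH\|=O_p(1)$; together $\|\bB\bH'\|=O_p(\sqrt{p})$. Part (i) of Lemma \ref{lc12add} gives $\|\bC_T\|\leq\|\bC_T\|_F=O_p(\omega_T\sqrt{p})$. Assumption \ref{a21}(ii) provides $\|\Sig_u^{-1}\|=O(1)$, and Theorem \ref{thm31} together with Lemma \ref{la1} gives $\|(\hSig_{u,\widehat{K}}^{\mathcal{T}})^{-1}-\Sig_u^{-1}\|=O_p(\omega_T^{1-q}m_p)=o_p(1)$, which in turn forces $\|(\hSig_{u,\widehat{K}}^{\mathcal{T}})^{-1}\|=O_p(1)$.

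Plugging these into the four terms produces: the first is $O_p(p\cdot\omega_T^{1-q}m_p)$; the second and third are each $O_p(\sqrt{p}\cdot 1\cdot\omega_T\sqrt{p})=O_p(p\omega_T)$; the fourth is $O_p(\omega_T^2 p)$. Since $m_p\geq 1$ and $\omega_T=o(1)$, we have $\omega_T\leq\omega_T^{1-q}m_p$ and $\omega_T^2\leq\omega_T^{1-q}m_p$ for $T$ large, so the first term dominates and the claimed bound $O_p(p\omega_T^{1-q}m_p)$ follows. There is no genuine obstacle beyond careful bookkeeping—the hard work (uniform control on $\bC_T$ via Lemma \ref{lc12add} and on $(\hSig_{u,\widehat{K}}^{\mathcal{T}})^{-1}-\Sig_u^{-1}$ via Theorem \ref{thm31}) has already been completed. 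The only conceptual subtlety worth flagging is that the ``signal'' matrix $\bB\bH'$ has operator norm of order $\sqrt{p}$, so even an $o_p(1)$ error in $(\hSig_{u,\widehat{K}}^{\mathcal{T}})^{-1}$ gets amplified by the factor $\|\bB\bH'\|^2=O_p(p)$, which is why the final rate carries the leading multiplier $p$ rather than being $o_p(1)$.
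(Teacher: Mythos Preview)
Your proposal is correct and is essentially identical to the paper's proof: the paper uses the same decomposition $\hLam=\bB\bH'+\bC_T$ and bounds the same three pieces (it merges your two cross terms into $2\|\bC_T'(\hSig_{u,\widehat{K}}^{\mathcal{T}})^{-1}\bB\bH'\|$), invoking exactly the same prior results. One small imprecision: the claim ``$m_p\geq 1$'' need not hold literally for $q\in(0,1)$, but since $m_p\geq\max_i|\sigma_{u,ii}|^q\geq c_1^q>0$ by Assumption \ref{a21}(ii) and $\omega_T^q=o(1)$, the needed inequality $\omega_T=O(\omega_T^{1-q}m_p)$ still follows.
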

\begin{proof} $\|\bC_T\|_F^2=O_p(\omega_T^2p).$ Hence
\begin{eqnarray} \label{eb15}
&&\|\hLam'(\hSig_{u, \widehat{K}}^{\mathcal{T}})^{-1}\hLam -({\bB}\bH')'\Sig_u^{-1}{\bB}\bH'\|\leq \|\bC_T'(\hSig_{u, \widehat{K}}^{\mathcal{T}})^{-1}\bC_T\|\cr
&+&2\|\bC_T'(\hSig_{u, \widehat{K}}^{\mathcal{T}})^{-1}\bB \bH'\|+\|\bB \bH'((\hSig_{u, \widehat{K}}^{\mathcal{T}})^{-1}-\Sig_u^{-1})\bB \bH'\|=O_p(p\omega_T^{1-q}m_p)
\end{eqnarray}
\end{proof}

\begin{lem} \label{lc.14} If $\omega_T^{1-q}m_p=o(1)$, then with probability approaching one, for some $c>0,$\\
(i) $\lambda_{\min}(\bI_K+(\bB \bH')'\Sig_u^{-1}{\bB}\bH')\geq cp$.\\
(ii) $\lambda_{\min}(\bI_K+\hLam'(\hSig_{u, \widehat{K}}^{\mathcal{T}})^{-1}\hLam )\geq cp.$\\
(iii)   $\lambda_{\min}(\bI_K+\bB'\Sig_u^{-1}{\bB})\geq cp$.\\
(iv)   $\lambda_{\min}((\bH\bH')^{-1}+\bB'\Sig_u^{-1}{\bB})\geq cp$.
\end{lem}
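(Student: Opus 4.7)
\textbf{Proof plan for Lemma \ref{lc.14}.} The natural order is (iii), then (i) and (iv) as easy consequences of (iii) plus Lemma \ref{lb5}, and finally (ii) via the perturbation bound in Lemma \ref{lc.13}. The only genuine input from the model is the pervasiveness condition (Assumption \ref{a35}) together with the uniform bound on $\|\Sig_u\|_1$ (Assumption \ref{a21}(ii)); everything else is linear algebra plus results already established in the excerpt.

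\textbf{Step 1 (part (iii)).} For any symmetric PSD matrix $\bM$ and matrix $\bA$ of compatible dimensions, $\lambda_{\min}(\bA'\bM\bA)\geq \lambda_{\min}(\bM)\lambda_{\min}(\bA'\bA)$. Apply this with $\bA=\bB$, $\bM=\Sig_u^{-1}$. Assumption \ref{a21}(ii) gives $\lambda_{\max}(\Sig_u)\leq \|\Sig_u\|_1\leq c_2$, so $\lambda_{\min}(\Sig_u^{-1})\geq c_2^{-1}$. Assumption \ref{a35} gives $\lambda_{\min}(\bB'\bB)\geq cp$ for some $c>0$. Hence $\lambda_{\min}(\bB'\Sig_u^{-1}\bB)\geq c c_2^{-1}p$, and adding $\bI_K$ only helps, yielding (iii).

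\textbf{Step 2 (parts (i) and (iv)).} For (i), write $(\bB\bH')'\Sig_u^{-1}\bB\bH'=\bH(\bB'\Sig_u^{-1}\bB)\bH'$. Using the same sandwich inequality with $\bA=\bH'$, one gets
\[
 \lambda_{\min}\bigl(\bH\bB'\Sig_u^{-1}\bB\bH'\bigr)\;\geq\;\lambda_{\min}(\bB'\Sig_u^{-1}\bB)\cdot\lambda_{\min}(\bH\bH').
\]
By Lemma \ref{lb5}(i), $\bH\bH'=\bI_K+o_p(1)$, so $\lambda_{\min}(\bH\bH')\geq 1/2$ with probability approaching one. Combined with Step 1, this gives the desired $\geq cp$ lower bound. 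For (iv), since $(\bH\bH')^{-1}$ is PSD, $\lambda_{\min}\bigl((\bH\bH')^{-1}+\bB'\Sig_u^{-1}\bB\bigr)\geq \lambda_{\min}(\bB'\Sig_u^{-1}\bB)\geq cp$ by (iii) directly.

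\textbf{Step 3 (part (ii)).} This is where the perturbation enters. Lemma \ref{lc.13} states
\[
 \bigl\|\hLam'(\hSig_{u,\widehat K}^{\mathcal{T}})^{-1}\hLam-(\bB\bH')'\Sig_u^{-1}\bB\bH'\bigr\|=O_p(p\,\omega_T^{1-q}m_p).
\]
By Weyl's inequality (Lemma \ref{lb.1}(1)),
\[
 \lambda_{\min}\bigl(\bI_K+\hLam'(\hSig_{u,\widehat K}^{\mathcal{T}})^{-1}\hLam\bigr)\;\geq\;\lambda_{\min}\bigl(\bI_K+(\bB\bH')'\Sig_u^{-1}\bB\bH'\bigr)-O_p(p\,\omega_T^{1-q}m_p).
\]
Part (i) bounds the first term on the right by $cp$, while the hypothesis $\omega_T^{1-q}m_p=o(1)$ makes the perturbation $o_p(p)$, so the difference is still of order $p$, completing (ii). The only ``obstacle'' worth flagging is remembering to invoke $\omega_T^{1-q}m_p=o(1)$ here so that the perturbation is negligible on the scale $p$ — the rest is bookkeeping.
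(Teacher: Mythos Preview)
Your proposal is correct and follows essentially the same route as the paper: the paper proves (i) first via $\lambda_{\min}(\Sig_u^{-1})\lambda_{\min}(\bB'\bB)\lambda_{\min}(\bH\bH')$ using Lemma \ref{lb5}, deduces (ii) from (i) plus Lemma \ref{lc.13}, and says (iii)--(iv) follow by the same argument, whereas you simply reorder to prove (iii) first and derive the rest from it. The ingredients and inequalities are identical.
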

\begin{proof}

(i)  By Lemma \ref{lb5}, with probability approaching one,  $\lambda_{\min}(\bH\bH')$ is bounded away from zero. Hence,
  \begin{eqnarray*}
&&\lambda_{\min}(\bI_K+({\bB}\bH')'\Sig_u^{-1}{\bB}\bH')\geq \lambda_{\min}(({\bB}\bH')'\Sig_u^{-1}{\bB}\bH'))\cr
&\geq& \lambda_{\min}(\Sig_u^{-1})\lambda_{\min}(\bH{\bB}'{\bB}\bH')\geq\lambda_{\min}(\Sig_u^{-1})\lambda_{\min}({\bB}'{\bB})\lambda_{\min}(\bH\bH')\geq cp.
\end{eqnarray*}
(ii) The result follows from part (i) and Lemma \ref{lc.13}. Part (iii) and (iv) follow from a similar argument of part (i) and Lemma \ref{lb5}.

\end{proof}

\textbf{Proof of Theorem \ref{thm32}}:

\begin{proof}
We derive the rate for $\|\hSig^{-1}_{\widehat{K}}-\Sig^{-1}\|.$ Define
$$
\tilde{\Sig}=\bB\bH'\bH\bB'+\Sig_u.
$$
Note that $\hSig_{\widehat{K}}=\hLam\hLam'+\hSig_{u, \widehat{K}}^{\mathcal{T}}$ and $\Sig=\bB\bB'+\Sig_u.$ The triangular inequality gives
$$
\|\hSig^{-1}_{\widehat{K}}-\Sig^{-1}\|\leq\|\hSig^{-1}_{\widehat{K}}-\tilde{\Sig}^{-1}\|+\|\tilde{\Sig}^{-1}-\Sig^{-1}\|.
$$
Using the Sherman-Morrison-Woodbury formula, we have $
\|\hSig^{-1}_{\widehat{K}}-\tilde{\Sig}^{-1}\|\leq\sum_{i=1}^6L_i,$
where
\begin{eqnarray}
L_1&=&\|(\hSig_{u, \widehat{K}}^{\mathcal{T}})^{-1}-\Sig_u^{-1}\|\cr
L_2&=&\|((\hSig_{u, \widehat{K}}^{\mathcal{T}})^{-1}-\Sig_u^{-1})\hLam[\bI_K+\hLam'(\hSig_{u, \widehat{K}}^{\mathcal{T}})^{-1}\hLam ]^{-1}\hLam'(\hSig_{u, \widehat{K}}^{\mathcal{T}})^{-1}\|
\cr
L_3&=&\|((\hSig_{u, \widehat{K}}^{\mathcal{T}})^{-1}-\Sig_u^{-1})\hLam[\bI_K+\hLam'(\hSig_{u, \widehat{K}}^{\mathcal{T}})^{-1}\hLam]^{-1}\hLam'\Sig_u^{-1}\|
\cr
L_4&=&\|\Sig_u^{-1}(\hLam-{\bB}\bH')[\bI_K+\hLam'(\hSig_{u, \widehat{K}}^{\mathcal{T}})^{-1}\hLam ]^{-1}\hLam'\Sig_u^{-1}\|
\cr
L_5&=&\|\Sig_u^{-1}(\hLam-{\bB}\bH')[\bI_K+\hLam'(\hSig_{u, \widehat{K}}^{\mathcal{T}})^{-1}\hLam ]^{-1}\bH{\bB}'\Sig_u^{-1}\|
\cr
L_6&=&\|\Sig_u^{-1}{\bB}\bH'([\bI_K+\hLam'(\hSig_{u, \widehat{K}}^{\mathcal{T}})^{-1}\hLam]^{-1}-[\bI_K+\bH{\bB}'\Sig_u^{-1}{\bB}\bH']^{-1})\bH{\bB}'\Sig_u^{-1}\|.
\end{eqnarray}

We bound each of the six terms respectively. First of all, $L_1$ is bounded by Theorem \ref{thm31}. Let $\bG=[\bI_K+\hLam'(\hSig_{u, \widehat{K}}^{\mathcal{T}})^{-1}\hLam ]^{-1}$, then
$$
L_2\leq \|(\hSig_{u, \widehat{K}}^{\mathcal{T}})^{-1}-\Sig_u^{-1}\|\cdot\|\hLam \bG\hLam'\| \cdot\|(\hSig_{u, \widehat{K}}^{\mathcal{T}})^{-1}\|.
$$
Note that Theorem \ref{thm31} implies $\|(\hSig_{u, \widehat{K}}^{\mathcal{T}})^{-1}\|=O_p(1)$. Lemma \ref{lc.14} then implies
$
\|\bG\|=O_p(p^{-1}).
$ This shows that $L_2=O_p(L_1)$. Similarly $L_3=O_p(L_1)$. In addition,  since $\|\bC_T\|_F^2=\|\hLam -{\bB}\bH'\|_F^2=O_p(\omega_T^2p)$,
$L_4\leq
\|\Sig_u^{-1}(\hLam -{\bB}\bH')\|\|\bG\|\|{\hLam }'\Sig_u^{-1}\|=O_p(\omega_T).
$
Similarly  $L_5=O_p(L_4)$. Finally, let $\bG_1=[\bI_K+({\bB}\bH')'\Sig_u^{-1}{\bB}\bH']^{-1}.$
 By Lemma \ref{lc.14}, $\|\bG_1\|=O_p(p^{-1})$. Then by Lemma \ref{lc.13},
\begin{eqnarray*}
\|\bG-\bG_1\|&=&\|\bG(\bG^{-1}-\bG_1^{-1})\bG_1\|\leq O_p(p^{-2})\|({\bB}\bH')'\Sig_u^{-1}{\bB}\bH'-\hLam'(\hSig_{u, \widehat{K}}^{\mathcal{T}})^{-1}\hLam \|\cr
&=&O_p\left( p^{-1}\omega_T^{1-q}m_p\right).
\end{eqnarray*}
Consequently, $L_6\leq \|\Sig_u^{-1}{\bB}\bH'\|^2\|\bG-\bG_1\|=O_p\left(\omega_T^{1-q}m_p\right).$
Adding up $L_1$-$L_6$ gives $$
\|\hSig^{-1}_{\widehat{K}}-\tilde{\Sig}^{-1}\|=O_p(\omega_T^{1-q}m_p).$$
One the other hand,  using  Sherman-Morrison-Woodbury formula again implies
\begin{eqnarray*}
\|\tilde{\Sig}^{-1}-\Sig^{-1}\|&\leq& \|\Sig_u^{-1}{\bB}([(\bH'\bH)^{-1}+{\bB}'\Sig_u^{-1}{\bB}]^{-1}-[\bI_K+{\bB}'\Sig_u^{-1}{\bB}]^{-1}){\bB}'\Sig_u^{-1}\|\cr
&\leq& O(p)\|[(\bH'\bH)^{-1}+{\bB}'\Sig_u^{-1}{\bB}]^{-1}-[\bI_K+{\bB}'\Sig_u^{-1}{\bB}]^{-1}\|\cr
&=&O_p(p^{-1})\|(\bH'\bH)^{-1}-\bI_K\|=o_p(\omega_T^{1-q}m_p).
\end{eqnarray*}
\end{proof}

\textbf{Proof of Theorem \ref{thm32}: $\|\hSig^{\mathcal{T}}-\Sig\|_{\max}$}
\begin{proof}
We first bound $\|\hLam\hLam'-\bB\bB'\|_{\max}$. Repeatedly using the triangular inequality yields
\begin{eqnarray*}
\|\hLam\hLam'-\bB\bB'\|_{\max}&=&\max_{i,j\leq p}|\hb_i'\hb_j-\bb_i'\bb_j|\cr
&\leq&\max_{ij}[|(\hb_i-\bH\bb_i)'\hb_j|+|\bb_i'\bH'(\hb_j-\bH\bb_j)|+|\bb_i'(\bH'\bH-\bI_K)\bb_j|]\cr
&\leq&(\max_i\|\hb_i-\bH\bb_i\|)^2+2\max_{ij}\|\hb_i-\bH\bb_i\|\|\bH\bb_j\|+\max_i\|\bb_i\|^2\|\bH'\bH-\bI_K\|\cr
&=&O_p(\omega_T).
\end{eqnarray*}
On the other hand, let $\sigma_{u,ij}$ be the $(i,j)$ entry of $\Sig_u$. Then $\max_{ij}|\hsig_{ij}-\sigma_{u,ij}|=O_p(\omega_T)$.
$$
\max_{ij}|s_{ij}(\hsig_{ij})-\sigma_{u,ij}|\leq \max_{ij}|s_{ij}(\hsig_{ij})-\hsig_{ij}|+|\hsig_{ij}-\sigma_{u,ij}|\leq\max_{ij}\tau_{ij}+O_p(\omega_T)=O_p(\omega_T).
$$
 Hence $
\|\hSig_{u,\widehat{K}}^{\mathcal{T}}-\Sig_u\|_{\max}=O_p(\omega_T).$
The result then follows immediately.
\end{proof}

\end{document}